\theoremstyle{plain}
\newtheorem{theorem}{Theorem}
\newtheorem{lemma}[theorem]{Lemma}
\newtheorem{corollary}[theorem]{Corollary}
\newtheorem{observation}[theorem]{Observation}
\theoremstyle{definition}
\newtheorem{definition}[theorem]{Definition}
\theoremstyle{remark}
\newtheorem{remark}[theorem]{Remark}
\title{\textbf{On coefficients of the interior and exterior polynomials}}
\author{
	\small Xiaxia Guan, \ \ Xian'an Jin\footnote{Corresponding author}\\[0.2cm]
	\small School of Mathematical Sciences, Xiamen University,\\
	\small Xiamen, Fujian 361005, China\\[0.2cm]
	\small E-mails: gxx0544@126.com; xajin@xmu.edu.cn}
\date{} 
\begin{document}
\begin{abstract}
The interior polynomial and the exterior polynomial are generalizations of valuations on $(1/\xi,1)$ and $(1,1/\eta)$ of the Tutte polynomial $T_G(x,y)$ of graphs to hypergraphs, respectively. The pair of hypergraphs induced by a connected bipartite graph are abstract duals and are proved to have the same interior polynomial, but may have different exterior polynomials. The top of the HOMFLY polynomial of a special alternating link coincides with the interior polynomial of the pair of hypergraphs induced by the Seifert graph of the link. Let $G=(V\cup E, \varepsilon)$ be a connected bipartite graph. In this paper, we mainly study the coefficients of the interior and exterior polynomials. We prove that the interior polynomial of a connected bipartite graph is interpolating. We strengthen the known result on the degree of the interior polynomial for connected bipartite graphs with 2-vertex cuts in $V$ or $E$. We prove that interior polynomials for a family of balanced bipartite graphs are monic and the interior polynomial of any connected bipartite graph can be written as a linear combination of interior polynomials of connected balanced bipartite graphs. The exterior polynomial of a hypergraph is also proved to be interpolating. It is known that the coefficient of the linear term of the interior polynomial is the nullity of the bipartite graph, we obtain a `dual' result on the coefficient of the linear term of the exterior polynomial: if $G-e$ is connected for each $e\in E$, then the coefficient of the linear term of the exterior polynomial is $|V|-1$. Interior and exterior polynomials for some families of bipartite graphs are computed.
\vskip0.2cm

\noindent{\bf Keywords:}\ Hypergraph; Bipartite graph; Interior polynomial; Exterior polynomial; Coefficient

\noindent {\bf MSC(2020)}\ 05C31; 05C65

\end{abstract}
\maketitle

\section{Introduction}
\noindent

The Tutte polynomial $T_{G}(x, y)$ \cite{Tutte} is an important and well-studied branch of graph and matroid theory, having wide applications from knot theory to statistical mechanics. Motivated by the study of the HOMFLY polynomial \cite{Freyd,Hom2}, which is a generalization of the celebrated Jones polynomial \cite{Jones} in knot theory, of special alternating links,  K\'{a}lm\'{a}n introduced the interior polynomial $I_{\mathcal{H}}(x)$ and the exterior polynomial $X_{\mathcal{H}}(y)$ \cite{Kalman1} (see Definitions \ref{def 2} and \ref{def 3}) via interior and exterior activities of hypertrees (firstly introduced as the left or right degree vector in \cite{Post}). The two polynomials generalized valuations of $T_G(1/x,1)$ and $T_G(1,1/y)$ of the Tutte polynomials to hypergraphs, respectively.

K\'{a}lm\'{a}n and Postnikov proved that the interior polynomial and the exterior polynomial are two invariants of hypergraphs by a straightforward argument and indirect approach by counting Ehrhart-type lattice points in \cite{Kalman1} and \cite{Kalman3}. Moreover, in \cite{Kalman3} they also showed that the interior polynomial is a Tutte-type invariant of bipartite graphs via the Ehrhart polynomial of the root polytope $Q$ and the $h$-vector of any triangulation of $Q$, in other words, the pair of hypergraphs induced by a bipartite graph have the same interior polynomials.
In \cite{Kalman2,Kalman3}, the authors established a relation between the top of the HOMFLY polynomial of any special alternating link and the interior polynomial of the Seifert graph (which is a bipartite graph) of the link.
Recently, Kato \cite{Keiju} introduced the signed interior polynomial of signed bipartite graphs and extended the relation to any oriented links, and in \cite{Kalman4} K\'{a}lm\'{a}n and T\'{o}thm\'{e}r\'{e}sz defined two one-variable generating functions $\widetilde{I}$ and $\widetilde{X}$ using two `embedding activities' and proved that the generating function of internal embedding activities coincides with the interior polynomial.

In this paper, we mainly study the coefficients of the interior and exterior polynomials of hypergraphs. We prove that the interior polynomial and the exterior polynomial of connected bipartite graphs are both interpolating, that is, if $I_{G}(x)=\sum\limits_{i=0}^{m}a_{i}x^{i}$ and $X_{G}(y)=\sum\limits_{j=0}^{n}b_{j}y^{j}$ of  a connected bipartite graph $G$, then $a_{i}\neq 0$ and $b_{j}\neq 0$ for any $i=0,1,\cdots,n$ and $j=0,1,\cdots,m$.

K\'{a}lm\'{a}n \cite{Kalman1} showed that the degree of interior polynomial of $G$ is at most $\min\{|V|,|E|\}-1$ for a bipartite graphs $G=(V\cup E, \varepsilon)$, and the degree of exterior polynomial of a hypergraph $\mathcal{H}=(V,E)$ is at most $|E|-1$. In this paper we study the degree of the interior polynomial of connected bipartite graphs with 2-vertex cuts in $E$ and prove that for a connected bipartite graph $G=(V\cup E, \varepsilon)$ wtih a 2-vertex cut $\{e_{1}, e_{2}\}$ in $E$, then the degree of the interior polynomial of $G$ is at most $\min\{|E|-1$, $|V|-t+1$\}, where $t$ is the number of connected components of $G-\{e_{1}, e_{2}\}$.

We show that interior polynomials for a family of balanced bipartite graphs are monic. It is well-known that the Tutte polynomial of graphs has a deletion-contraction formula. Using deletion-contraction relations of the interior polynomials repeatedly, we show that the interior polynomial of any connected bipartite graph can be represented as a linear combination of the interior polynomials of connected balanced bipartite graphs.

K\'{a}lm\'{a}n \cite{Kalman1} proved that both the interior polynomial and the exterior polynomial have constant term 1 for any connected bipartite graph. Moveover, they proved that the coefficient of the linear term in the interior polynomial for any connected bipartite graph is the nullity of the bipartite graph. In this paper, we prove that if $G-e$ is connected for each $e\in E$, then the coefficient of the linear term of the exterior polynomial is $|V|-1$.

As far as we know that the interior polynomial and exterior polynomial of only few bipartite graphs are known. In this paper, we also compute the interior polynomial and exterior polynomial for some families of bipartite graphs such as trees, even cycles, complete bipartite graphs and so on.

The paper is organized as follows. In Section 2 we give preliminaries on the two polynomials and make some necessary preparations. In Section 3, we study interpolatory property, the degree, the monic property and the representation of the interior polynomial. Interpolatory property and the coefficient of the linear term of the exterior polynomial are studied in Section 4. In Section 5, the interior polynomial and exterior polynomial for some families of bipartite graphs are computed.

\section{Preliminaries}
\noindent

In this section, we will give some definitions and summarise some known results.

\begin{definition}
A \emph{hypergraph} is a  pair $\mathcal{H}=(V,E)$, where $V$ is a finite set and $E$ is a finite multiset of non-empty subsets of $V$. Elements of $V$ are called \emph{vertices} and  elements of $E$ are called \emph{hyperedges}, respectively, of the hypergraph.
\end{definition}
A hypergraph is a generalization of a graph with loops and multiple edges allowed. For a hypergraph $\mathcal{H}$, its associated bipartite graph $Bip \mathcal{H}$ is defined as follows.
\begin{definition}
The sets $V$ and $E$ are the colour classes of the bipartite graph $Bip \mathcal{H}$, and an element $v$ of $V$ is connected to an element $e$ of $E$ in $Bip \mathcal{H}$ if and only if $v\in e$.
\end{definition}

Clearly, $Bip \mathcal{H}$ is simple, that is, it has no multiple edges in $Bip \mathcal{H}$ (and clearly, it has no loops). If $\mathcal{H}$ is a graph, then $Bip \mathcal{H}$ is the subdivision of $\mathcal{H}$, but two multiple edges obtained by subdividing a loop should be replaced by a single edge.

Conversely, a pair of hypergraphs $\mathcal{H}=(V,E)$ and $\overline{\mathcal{H}}=(E,V)$ can be recovered from a bipartite graph (without multiple edges) if we specify $V$ and $E$ as vertex set of the hypergraph, respectively. These two hypergraphs $\mathcal{H}=(V,E)$ and $\overline{\mathcal{H}}=(E,V)$ are called \emph{abstract duals}.

We say a hypergraph $\mathcal{H}=(V,E)$ is \emph{connected} if $Bip \mathcal{H}$ is connected. Throughout the paper we consider connected hypergraphs.

\begin{definition} \label{def 1}
Let $\mathcal{H}=(V,E)$ be a connected hypergraph. A hypertree in $\mathcal{H}$ is a function $f$: $E\rightarrow N=\{0,1,2,\cdots\}$ so that a spanning tree $\tau$ of its associated bipartite graph $Bip \mathcal{H}$ can be found with the degree of $e$ in $\tau$, $d_{\tau}(e)=f(e)+1$ for each $e\in E$. We call that $\tau$ \emph{realises} or \emph{induces} $f$. We denote the set of all hypertrees in $\mathcal{H}$ with $B_{\mathcal{H}}$.
\end{definition}

Hypertrees generalize spanning trees of graphs in the sense that an edge $e$ is in the tree if and only if $f(e)=1$ and not in the tree if and only if $f(e)=0$. As a generalization of the rank of a graph, the following parameter $\mu(E')$ is introduced for each $E'\subset E$.

\begin{definition}
Let $G=(V\cup E, \varepsilon)$ be a connected bipartite graph. For a subset $E'\subset E$, let $G|_{E'}$ denote the bipartite graph formed by $E'$, all edges of $G$ incident with elements of $E'$ and their endpoints in $V$. We denote $\mu(E')=0$ for $E'=\emptyset$, and $\mu(E')=|\bigcup E'|-c(E')$ for $E'\neq \emptyset$, where $\bigcup E'=V\cap (G|_{E'})$ and $c(E')$ is the number of connected components of $G|_{E'}$.
\end{definition}

The following facts  hold  for  hypertrees of a hypergraph.

\begin{theorem} [\cite{Kalman1}] \label{thm 1}
Let $\mathcal{H}=(V,E)$ be a connected hypergraph and $Bip \mathcal{H}$ be the bipartite graph associated to the hypergraph $\mathcal{H}$. Let $f$ be a hypertree of $\mathcal{H}$. Then
\begin{enumerate}
\item[(1)] $0\leq f(e)\leq d_{Bip \mathcal{H}}(e)-1$ for all $e\in E$;

\item[(2)] $\sum\limits_{e\in E}f(e)=|V|-1$;

\item[(3)] $\sum\limits_{e\in E'}f(e)\leq \mu(E')$ for all $ E'\subset E$.
\end{enumerate}
\end{theorem}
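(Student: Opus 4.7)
The plan is to prove all three parts by fixing a spanning tree $\tau$ of $Bip\,\mathcal{H}$ that realises $f$, and then reading off each inequality from combinatorial properties of $\tau$.

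For part (1), I would observe that a spanning tree of a connected graph on at least two vertices assigns positive degree to every vertex, so $d_{\tau}(e)\geq 1$ for each $e\in E$, giving $f(e)\geq 0$. The upper bound is immediate from $\tau\subseteq Bip\,\mathcal{H}$, which forces $d_{\tau}(e)\leq d_{Bip\,\mathcal{H}}(e)$, hence $f(e)\leq d_{Bip\,\mathcal{H}}(e)-1$.

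For part (2), I would count the edges of $\tau$ in two ways. Since $Bip\,\mathcal{H}$ has vertex set $V\cup E$ and $\tau$ is a spanning tree, $|E(\tau)|=|V|+|E|-1$. On the other hand, $Bip\,\mathcal{H}$ is bipartite with parts $V$ and $E$, so every edge of $\tau$ has exactly one endpoint in $E$, giving $\sum_{e\in E}d_{\tau}(e)=|E(\tau)|$. Subtracting $|E|$ from both sides of $\sum_{e\in E}d_{\tau}(e)=|V|+|E|-1$ yields $\sum_{e\in E}f(e)=|V|-1$.

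Part (3) is the most substantive step. Fix $E'\subseteq E$ and let $F$ be the subgraph of $\tau$ consisting of all edges of $\tau$ incident to some vertex in $E'$, viewed as a graph on vertex set $E'\cup\bigcup E'$. Because every edge of $Bip\,\mathcal{H}$ runs between $V$ and $E$, each such edge of $\tau$ has its other endpoint in $\bigcup E'$, so $F$ is a subgraph of $G|_{E'}$ with $|E(F)|=\sum_{e\in E'}d_{\tau}(e)$. Since $F\subseteq\tau$, $F$ is a forest, and using the identity $|E(F)|=|V(F)|-c(F)$ (counting isolated vertices as components) I get
\[
\sum_{e\in E'}d_{\tau}(e)=|E'|+|\bigcup E'|-c(F).
\]
Each connected component of $F$ is a connected subgraph of $G|_{E'}$ on the same vertex set, hence is contained in some component of $G|_{E'}$, which forces $c(F)\geq c(E')$. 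Substituting this inequality and subtracting $|E'|$ gives $\sum_{e\in E'}f(e)\leq |\bigcup E'|-c(E')=\mu(E')$, as required.

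The main obstacle is the bookkeeping in part (3): one has to be careful that $F$ is taken on the correct vertex set (including vertices of $\bigcup E'$ that may end up isolated in $F$) so that the forest identity $|E(F)|=|V(F)|-c(F)$ yields exactly the bound $\mu(E')$, and that the inequality $c(F)\geq c(E')$ is justified by the subgraph relation $F\subseteq G|_{E'}$ on the same vertex set. Everything else is direct spanning-tree accounting.
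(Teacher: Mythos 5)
Your proof is correct, and since Theorem \ref{thm 1} is quoted from \cite{Kalman1} the paper gives no proof of its own; your argument is the standard one from the original source, namely direct spanning-tree accounting, with part (3) handled exactly as there by restricting $\tau$ to $G|_{E'}$, applying the forest identity $|E(F)|=|V(F)|-c(F)$, and using that the spanning subgraph $F$ of $G|_{E'}$ satisfies $c(F)\geq c(E')$. The bookkeeping you flag (isolated vertices of $\bigcup E'$ counted as components) is handled correctly.
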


\begin{definition}
Let $\mathcal{H}=(V,E)$ be a connected hypergraph. Let $f$ be a hypertree of $\mathcal{H}$.  For the subset $E'\subset E$, we say that $E'$ is  tight at $f$, if  $\sum\limits_{e\in E'}f(e)= \mu(E')$ holds.
\end{definition}

It is clear that $\emptyset$ and $E$ are always tight at any hypertree, and if $E'$ is tight at $f$ and $\sum\limits_{e\in E'}f(e)= \sum\limits_{e\in E'}g(e)$ for another hypertree $g$, then $E'$ is also tight at $g$. Moreover, the next Theorem follows immediately from Theorem 44.2 in \cite{Schrijver} since $\mu$ is submodular.

\begin{theorem}\label{thm 2}
Let $\mathcal{H}=(V,E)$ be a connected hypergraph. Let $f$ be a hypertree of $\mathcal{H}$.  If the subsets $A\subset E$ and $B\subset E$ are both tight at $f$, then $A\cap B$ and $A\cup B$ are both tight at $f$.
\end{theorem}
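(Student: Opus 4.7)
The plan is to reduce the claim to the submodularity of $\mu$ and then run a standard sandwich argument. Writing $f(E') := \sum_{e \in E'} f(e)$, Theorem~\ref{thm 1}(3) gives $f(E') \leq \mu(E')$ for every $E' \subseteq E$, with equality at $E' = A$ and $E' = B$ by hypothesis. Since $f$ is additive on its argument, $f(A) + f(B) = f(A \cup B) + f(A \cap B)$. Chaining these observations yields
\[
\mu(A) + \mu(B) = f(A) + f(B) = f(A \cup B) + f(A \cap B) \leq \mu(A \cup B) + \mu(A \cap B).
\]
If one also knows the reverse inequality $\mu(A \cup B) + \mu(A \cap B) \leq \mu(A) + \mu(B)$, i.e.\ submodularity of $\mu$, then the middle inequality collapses to an equality, and combining $f(A \cup B) \leq \mu(A \cup B)$ and $f(A \cap B) \leq \mu(A \cap B)$ termwise with the equality of their sums forces each of them to be tight.

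The main step, and the principal obstacle, is therefore to prove that $\mu$ is submodular. I would verify this via the equivalent diminishing-returns form: for $X \subseteq X' \subseteq E$ and $e \in E \setminus X'$,
\[
\mu(X \cup \{e\}) - \mu(X) \geq \mu(X' \cup \{e\}) - \mu(X').
\]
Let $N_e \subseteq V$ denote the set of $V$-neighbours of $e$ in $Bip\,\mathcal{H}$. A direct bookkeeping of how $|\bigcup\cdot|$ and $c(\cdot)$ change when $e$ is adjoined to $G|_X$ gives $\mu(X \cup \{e\}) - \mu(X) = T_X(e) - 1$, where $T_X(e)$ is the number of equivalence classes on $N_e$ under the relation ``lying in a common component of $G|_X$'', with vertices in $N_e \setminus \bigcup X$ counted as singleton classes. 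Enlarging $X$ to $X'$ can only merge components of $G|_X$ and never split them, so $T_{X'}(e) \leq T_X(e)$, which gives the marginal-returns inequality, and hence submodularity.

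The delicate point is that $|\bigcup E'|$ is submodular but $-c(E')$ is not, so the two summands of $\mu(E') = |\bigcup E'| - c(E')$ cannot be treated separately; the marginal-returns formulation handles their interaction simultaneously. Once submodularity is in hand, the proof reduces to the three-line sandwich above, which is precisely the content of Theorem~44.2 in \cite{Schrijver} invoked by the authors.
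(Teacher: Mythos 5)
Your proof is correct. The paper itself offers no argument beyond the remark that the statement ``follows immediately from Theorem 44.2 in \cite{Schrijver} since $\mu$ is submodular,'' and your three-line sandwich argument is exactly the content of that citation, so in that respect the two routes coincide. What you genuinely add is a proof of the submodularity of $\mu$, which the paper asserts without justification. Your marginal-returns computation checks out: adjoining $e$ to $G|_X$ adds $|N_e\setminus\bigcup X|$ new vertices and replaces the $k$ components of $G|_X$ meeting $N_e$ (together with the new vertices and $e$) by a single component, so $\mu(X\cup\{e\})-\mu(X)=|N_e\setminus\bigcup X|+k-1=T_X(e)-1$; and since every component of $G|_X$ lies inside a component of $G|_{X'}$ for $X\subseteq X'$, the partition of $N_e$ only coarsens, giving $T_{X'}(e)\leq T_X(e)$ and hence the diminishing-returns inequality. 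Your observation that $|\bigcup E'|$ and $-c(E')$ cannot be handled separately is also apt. The net effect is a self-contained proof where the paper delegates both the submodularity and the tight-set lattice property to the literature; the only cost is length.
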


\begin{theorem}[\cite{Kalman1}]\label{thm}
Let $\mathcal{H}=(V,E)$ be a connected hypergraph and $Bip \mathcal{H}$ be the bipartite graph associated to  $\mathcal{H}$. Let $f$ be a hypertree of $\mathcal{H}$, and $\tau$ be a spanning tree of $Bip \mathcal{H}$ inducing $f$.  The subset $A\subset E$ is tight at $f$ if and only if $\tau|_{A}$ is a spanning forest of $Bip \mathcal{H}|_{A}$, i.e., components of $\tau|_{A}$ are exactly spanning trees of components of $Bip \mathcal{H}|_{A}$.
\end{theorem}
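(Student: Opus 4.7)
The plan is to reduce both directions of the equivalence to a single counting identity: the number of components of the forest $\tau|_A$, computed via Euler's formula, must match $c(A)$ exactly when $A$ is tight.

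First I would set up $\tau|_A$ carefully as a \emph{spanning} subgraph of $Bip\mathcal{H}|_A$: its vertex set is $A\cup\bigcup A$ and its edge set is $E(\tau)\cap E(Bip\mathcal{H}|_A)$, with any $v\in\bigcup A$ that happens to have no $\tau$-neighbor in $A$ counted as a trivial component. Using $d_{\tau}(e)=f(e)+1$ for each $e\in E$, the edges of $\tau|_A$ number
$$|E(\tau|_A)|=\sum_{e\in A}d_{\tau}(e)=\sum_{e\in A}f(e)+|A|.$$

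Next, since $\tau|_A$ is a subgraph of the tree $\tau$, it is itself a forest, so its number of components equals vertices minus edges:
$$\bigl(|A|+|\textstyle\bigcup A|\bigr)-\Bigl(\sum_{e\in A}f(e)+|A|\Bigr)=|\textstyle\bigcup A|-\sum_{e\in A}f(e).$$
On the other hand, $\tau|_A$ is a spanning subgraph of $Bip\mathcal{H}|_A$, which has $c(A)$ components, so its component count is always at least $c(A)$, with equality precisely when every component of $\tau|_A$ has the same vertex set as some component of $Bip\mathcal{H}|_A$, i.e.\ when $\tau|_A$ is a spanning forest whose components are spanning trees of the components of $Bip\mathcal{H}|_A$. (Incidentally, the inequality here is exactly part (3) of Theorem~\ref{thm 1}.)

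Combining the two paragraphs, ``$\tau|_A$ is a spanning forest of $Bip\mathcal{H}|_A$'' is equivalent to $|\bigcup A|-\sum_{e\in A}f(e)=c(A)$, which rearranges to $\sum_{e\in A}f(e)=|\bigcup A|-c(A)=\mu(A)$, i.e.\ tightness of $A$ at $f$. I expect no deep obstacle; the only subtlety is making sure $\tau|_A$ is treated as a spanning subgraph (so isolated $V$-vertices in $\bigcup A$ contribute trivial components), since otherwise the Euler count and the comparison with $c(A)$ would be off by exactly the number of such vertices.
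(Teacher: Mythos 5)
Your argument is correct and complete. The paper itself offers no proof of this statement—it is quoted from K\'{a}lm\'{a}n's paper \cite{Kalman1}—so there is nothing in the source to compare against line by line; but your reduction to a single Euler-formula count is exactly the natural (and, as far as I can tell, the standard) way to prove it. The two key points are both handled properly: (i) every edge of $\tau$ incident with an element of $A$ has its $V$-endpoint in $\bigcup A$, so $|E(\tau|_A)|=\sum_{e\in A}d_\tau(e)=\sum_{e\in A}f(e)+|A|$ with no double counting (each edge meets $A$ in exactly one endpoint since $A\subset E$); and (ii) you insist that $\tau|_A$ be taken as a spanning subgraph of $Bip\mathcal{H}|_A$, so that isolated vertices of $\bigcup A$ count as components and the forest identity ``components $=$ vertices $-$ edges'' gives $|\bigcup A|-\sum_{e\in A}f(e)$. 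The surjection from components of $\tau|_A$ onto components of $Bip\mathcal{H}|_A$ then yields the inequality $\sum_{e\in A}f(e)\le\mu(A)$ (recovering Theorem \ref{thm 1}(3), as you note), with equality precisely when each component of $\tau|_A$ spans—hence, being acyclic and connected, is a spanning tree of—the corresponding component of $Bip\mathcal{H}|_A$. This is a clean, self-contained proof of the cited theorem.
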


\begin{definition}
Let $\mathcal{H}$ be a connected hypergraph. Let $f$ be a hypertree and $e$, $e'$ be two hyperedges of $\mathcal{H}$. We say $f$ is the hypertree  so that \emph{a transfer of valence} is possible from  $e$ to $e'$ if  the function $f'$ obtained from $f$ by decreasing $f(e)$ by 1 and increasing $f(e')$ by 1 is also a hypertree. We also say that $f$ and $f'$ are related by a transfer of valence from  $e$ to $e'$.
\end{definition}

Now we introduce (internal and external) activity and inactivity of hypertrees of a hypergraph with a fixed totally ordering on $E$.

\begin{definition} \label{def 2}
Let $\mathcal{H}=(V,E)$ be a connected hypergraph and $f$ be a hypertree. A hyperedge $e\in E$ is internally active with respect to the hypertree $f$  if one cannot decrease $f(e)$ by 1  and increase $f(e')$  of a hyperedge $e'$ smaller than $e$ by 1 so that another hypertree results. We say that a hyperedge $e\in E$ is internally inactive with respect to the hypertree $f$  if it is not internally active. Let $\iota(f)$ and $\overline{\iota}(f)$ denote the number of internally active  hyperedges and internally inactive hyperedges, respectively, with respect to $f$. These two values are called the internal activity and internal inactivity, respectively, of $f$.
\end{definition}

Similar to internal activity and inactivity, there are external activity and inactivity of a hypertree in a hypergraph with a fixed totally ordering on $E$.

\begin{definition}\label{def 3}
Let $\mathcal{H}=(V,E)$ be a connected hypergraph and $f$ be a hypertree. A hyperedge $e\in E$ is externally active with respect to the hypertree $f$  if one cannot increase $f(e)$ by 1  and decrease $f(e')$  for some hyperedge $e'<e$ by 1 so that another hypertree results. We say that a hyperedge $e\in E$ is externally inactive with respect to $f$  if it is not externally active. Let $\epsilon(f)$ and $\overline{\epsilon}(f)$ denote the number of externally  active  hyperedges and externally  inactive hyperedges, respectively, with respect to $f$. These two values are called the external activity and external inactivity, respectively, of $f$.
\end{definition}

We have that $\iota(f)+\overline{\iota}(f)=\epsilon(f)+\overline{\epsilon}(f)=|E|$ by definitions above. Now, we can define the interior polynomial and the exterior polynomial of hypergraphs as follows.

\begin{definition}
Let $\mathcal{H}=(V,E)$ be a connected hypergraph. For some fixed  order on $E$, we denote the interior polynomial $I_{\mathcal{H}}(x)=\sum\limits_{f\in B_{\mathcal{H}}} x^{\overline{\iota}(f)}$ and the exterior polynomial  $X_{\mathcal{H}}(y)=\sum\limits_{f\in B_{\mathcal{H}}} y^{\overline{\epsilon}(f)}$.
\end{definition}

K\'{a}lm\'{a}n and Postnikov proved that both the interior polynomial and the exterior polynomial are well-defined in \cite{Kalman1} and \cite{Kalman3}, i.e, they do not depend on the order on $E$.

\begin{theorem}[\cite{Kalman1}] \label{thm 3}
Let $\mathcal{H}=(V,E)$ be a connected hypergraph. Then the interior polynomial and the exterior polynomial of $\mathcal{H}$ do not depend on the chosen order on $E$.
\end{theorem}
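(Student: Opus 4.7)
My plan is to reduce the invariance to swaps of two consecutive hyperedges in the order, since adjacent transpositions generate the full symmetric group on all orderings. Fix a total order $<$ on $E$, pick two consecutive elements $e < e'$ in $<$, and let $<'$ be the order obtained by interchanging them. For every $a \in E\setminus\{e,e'\}$ the set $\{b\in E:b<a\}$ equals $\{b\in E:b<'a\}$, so by Definitions \ref{def 2} and \ref{def 3} the (internal or external) activity of $a$ at any hypertree is identical under the two orders. All possible changes are thus confined to the statuses of $e$ and $e'$.

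Let $T$ denote the event that decreasing $f(e)$ by $1$ and increasing $f(e')$ by $1$ produces a hypertree, let $T^{-1}$ denote the reverse, and let $A_e$ (resp.\ $A_{e'}$) denote the event that no transfer from $e$ (resp.\ $e'$) to any hyperedge in $S := \{b\in E : b < e\}$ yields a hypertree. Unfolding Definition \ref{def 2} through a short case analysis gives the compact formula
\[
\bar{\iota}_{<'}(f)-\bar{\iota}_{<}(f)\;=\;\mathbf{1}\bigl[A_e(f)\wedge T(f)\bigr]\;-\;\mathbf{1}\bigl[A_{e'}(f)\wedge T^{-1}(f)\bigr],
\]
so each hypertree contributes $0$, $+1$, or $-1$ to the difference. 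To conclude that $I_{\mathcal{H}}^{<}(x) = I_{\mathcal{H}}^{<'}(x)$ I will construct an involution $\Phi \colon B_{\mathcal{H}} \to B_{\mathcal{H}}$ with the property that $\bar\iota_<(\Phi(f)) = \bar\iota_{<'}(f)$ for every hypertree $f$. Set $\Phi(f)=f$ when $\bar{\iota}_{<'}(f)=\bar{\iota}_<(f)$; in the $+1$ case send $f$ to the hypertree $g$ obtained by transferring one unit of valence from $e$ to $e'$; handle the $-1$ case by the reverse transfer. The reverse transfer in $g$ recovers $f$, and any transfer $g\colon e'\to z$ with $z\in S$ would compose with $f\to g$ to give a forbidden transfer $f\colon e\to z$, violating $A_e(f)$; hence $A_{e'}(g)\wedge T^{-1}(g)$ holds and $\Phi$ is a well-defined involution pairing the two non-trivial cases.

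The crux is the numerical identity $\bar\iota_<(g)=\bar\iota_<(f)+1$, which requires tracking how the activity of every hyperedge $a$ (including $e$ and $e'$) under $<$ changes when $f$ is altered by the transfer $e\to e'$. For this I plan to invoke Theorem \ref{thm 2} (tight subsets are closed under $\cap$ and $\cup$) together with Theorem \ref{thm}: each candidate transfer $a\to z$ is obstructed precisely by a canonical minimal tight set $M_a$ containing $a$, and the fact that $f$ and $g$ differ only in the values at $e$ and $e'$ lets one show that $M_a(f)$ and $M_a(g)$ are related by at most swapping $e$ for $e'$, so that exactly one net activity flip occurs across the pair $(f,g)$. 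The exterior polynomial case is entirely parallel, with the direction of every valence transfer reversed and Definition \ref{def 3} replacing Definition \ref{def 2} throughout; the main obstacle in both cases is this bookkeeping of minimal tight sets across the valence transfer, after which the invariance follows by iterating over adjacent transpositions.
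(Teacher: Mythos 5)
The paper gives no proof of this theorem---it is quoted from \cite{Kalman1}---so I am judging your argument on its own terms against the known proof. Your reduction to adjacent transpositions is the right skeleton, and the difference formula $\bar\iota_{<'}(f)-\bar\iota_{<}(f)=\mathbf{1}[A_e(f)\wedge T(f)]-\mathbf{1}[A_{e'}(f)\wedge T^{-1}(f)]$ is correct. The gap is in the involution $\Phi$. For $f$ in the $+1$ case you verify that $g=f-\chi_e+\chi_{e'}$ satisfies $A_{e'}(g)\wedge T^{-1}(g)$, but for $g$ to lie in the $-1$ case you also need $\neg\bigl(A_e(g)\wedge T(g)\bigr)$, which you never check and which can fail: then $g$ lands in the $0$ case, where you have declared $\Phi(g)=g$, so $\Phi$ is not injective and the reindexing $\sum_f x^{\bar\iota_{<'}(f)}=\sum_f x^{\bar\iota_{<}(\Phi(f))}$ collapses. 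Concretely, let $Bip\,\mathcal{H}=K_{2,3}$ with $E=\{e,e'\}$ and $|V|=3$, and let $e<e'$ be the whole order, so $S=\emptyset$ and $A_e$, $A_{e'}$ hold vacuously for every hypertree. The hypertrees are $(2,0)$, $(1,1)$, $(0,2)$; the first has difference $+1$, the last $-1$, and $(1,1)$ has difference $0$ because both $T$ and $T^{-1}$ hold there. Your map sends $(2,0)\mapsto(1,1)$, $(0,2)\mapsto(1,1)$, and $(1,1)\mapsto(1,1)$: three hypertrees with one image.

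The repair is to replace the single transfer step by the whole transfer line: fix the values of $f$ outside $\{e,e'\}$ and the sum $f(e)+f(e')$; the hypertrees in such a class form an integer interval in the coordinate $f(e)$ (an exchange-property fact), and the correct bijection reverses each interval. One then shows, using monotonicity statements of the kind in the paper's Lemmas \ref{lem 4} and \ref{lem 5}, that $\bar\iota_{<}$ read along the interval in one direction agrees with $\bar\iota_{<'}$ read in the opposite direction; in the example this sends $(2,0)\leftrightarrow(0,2)$ and fixes $(1,1)$, which does satisfy $\bar\iota_{<}(\Phi(f))=\bar\iota_{<'}(f)$ throughout. This is essentially K\'alm\'an's argument. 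Your closing paragraph about canonical minimal tight sets is in any case only a plan, but the more basic problem is that the bijection it is meant to support is not one.
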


If $G=(V, E)$ is a graph with the Tutte polynomial $T_{G}(x, y)$, then its (viewed as a hypergraph) interior polynomial is $x^{|V|-1}T(1/x,1)$, and its exterior polynomial is $y^{|E|-|V|+1}T(1,1/y)$. Moreover, the interior polynomial is an invariant of bipartite graphs \cite{Kalman3}.

\begin{theorem}[\cite{Kalman3}] \label{thm 4}
 If $\mathcal{H}$ and $\overline{\mathcal{H}}$ are abstract dual hypergraphs, then $I_{\mathcal{H}}(x)=I_{\overline{\mathcal{H}}}(x)$.
\end{theorem}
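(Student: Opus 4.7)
The plan is to reinterpret $I_{\mathcal{H}}(x)$ as an invariant of a geometric object attached to $G = Bip\,\mathcal{H}$ that makes no distinction between $V$ and $E$. Let
$$Q_G = \mathrm{conv}\{\mathbf{e}_v + \mathbf{e}_e : \{v,e\}\in\varepsilon\} \subset \mathbb{R}^V\oplus\mathbb{R}^E$$
be the root polytope of $G$. Its definition is symmetric in the two colour classes, so any combinatorial invariant extracted from $Q_G$ automatically yields the desired identity $I_{\mathcal{H}}(x)=I_{\overline{\mathcal{H}}}(x)$. The task is therefore to express $I_{\mathcal{H}}(x)$ as such an invariant.

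The first step, following Postnikov, is to pass from hypertrees to simplices. The vertices of $Q_G$ are exactly the edges of $G$, and the simplex spanned by a spanning tree $\tau$ of $G$ has normalised volume $1$; moreover, two spanning trees that induce the same hypertree $f\in B_{\mathcal{H}}$ produce the same simplex. A total order on $E$ cuts down this family to a unimodular triangulation $\Delta_E$ of $Q_G$ whose maximal simplices are in bijection with $B_{\mathcal{H}}$, while a total order on $V$ gives a triangulation $\Delta_V$ whose maximal simplices are in bijection with $B_{\overline{\mathcal{H}}}$.

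The second step is to match internal inactivity with a shelling statistic: with respect to a shelling order on $\Delta_E$ compatible with the chosen order on $E$, the restriction face of the simplex corresponding to $f$ has cardinality exactly $\overline{\iota}(f)$. This identifies $I_{\mathcal{H}}(x)$ with the $h$-polynomial of $\Delta_E$, and symmetrically $I_{\overline{\mathcal{H}}}(x)$ with the $h$-polynomial of $\Delta_V$. Since both triangulations are unimodular, their $h$-polynomials both coincide with the $h^{*}$-polynomial of $Q_G$, which depends only on the polytope. The theorem follows.

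The hard part is the second step: translating the transfer-of-valence description of internal activity in Definition \ref{def 2} into a face-inclusion condition inside a shelling of $\Delta_E$, and checking the compatibility edge by edge. A more direct combinatorial bijection $B_{\mathcal{H}}\to B_{\overline{\mathcal{H}}}$ preserving $\overline{\iota}$ would bypass the polytope machinery, but no ordering on one colour class obviously induces an ordering on the other in a way that respects activities on both sides, so the geometric intermediate seems essential.
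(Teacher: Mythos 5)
The theorem you were asked to prove is not actually proved in this paper: it is quoted from K\'{a}lm\'{a}n--Postnikov \cite{Kalman3}, and the route you outline --- realise $I_{\mathcal{H}}(x)$ as the $h$-vector of a unimodular triangulation of the root polytope $Q_G$, note that every unimodular triangulation has $h$-vector equal to the $h^{*}$-polynomial of $Q_G$, and observe that $Q_G$ treats the two colour classes symmetrically --- is precisely the strategy of that reference, as the introduction of the present paper itself summarises. So you have identified the right road. But as a proof your text has a genuine gap, and you concede it: the entire content of the theorem is concentrated in your ``second step'', namely that for a suitable shelling of $\Delta_E$ the restriction face of the simplex attached to $f$ has exactly $\overline{\iota}(f)$ elements. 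Everything surrounding that claim (unimodularity of tree simplices, the identity $h=h^{*}$, the symmetry of $Q_G$) is routine; the activity-to-shelling dictionary is the theorem, and asserting it is not proving it.

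There is also a concrete error in your first step. Two spanning trees of $G$ inducing the same hypertree are in general different edge sets, hence span \emph{different} simplices of $Q_G$; it is false that they ``produce the same simplex''. What is true, and itself requires proof, is that the normalised volume of $Q_G$ equals $|B_{\mathcal{H}}|$ and that one can construct a triangulation using exactly one realising tree per hypertree; moreover a bare total order on $E$ does not obviously determine such a triangulation together with a compatible shelling, and supplying that construction is part of the work done in \cite{Kalman3}. Until the bijection simplices-to-hypertrees and the identity between restriction-face size and internal inactivity are established, the argument does not close.
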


However, the two abstract dual hypergraphs induced by a bipartite graph may have different exterior polynomials. When we say the exterior polynomial of a bipartite graph, we shall specify which colour class is the set of hyperedges. In addition, it is not difficult to see that multiple edges in a bipartite graph do not affect the set of hypertrees and so do the two polynomials. Recall that for a hypergraph $\mathcal{H}$ its associated bipartite graph $Bip \mathcal{H}$ is always simple.

The support of a polynomial $f(x)=\sum\limits_{i=0}^{m}a_{i}x^{i}$ is the set $supp(f)=\{i|a_{i}\neq 0\}$ of  indices of the non-zero coefficients.
\begin{definition}
The \emph{degree} of the polynomial $f$ is the maximum of its $supp(f)$. The polynomial $f$ is called \emph{interpolating} if its $supp(f)$ is an integer interval $[n_1, n_2]$
of all integers from $n_1$ to $n_2$, inclusive.
\end{definition}

\section{The interior polynomial}
\noindent

In this section, we study interpolatory property of the interior polynomial, the degree of the interior polynomial for bipartite graphs having 2-vertex cuts in $E$ or $V$, monic property of interior polynomials for balanced bipartite graphs and the representation of the interior polynomial of a bipartite graph as a linear combination of those of some balanced bipartite graphs. Since $I_{\mathcal{H}}(x)=I_{\overline{\mathcal{H}}}(x)$ for abstract dual hypergraphs $\mathcal{H}$ and $\overline{\mathcal{H}}$, without loss of generality we assume that vertices in $E$ of a connected bipartite graph $G=(V\cup E, \varepsilon)$ will be regarded as hyperedges of the hypergraph $\mathcal{H}$.

\subsection{Interpolatory property}
\noindent
In this subsection, we will show that the interior polynomial of any connected bipartite graph is interpolating. We need the following lemmas.

\begin{lemma}[\cite{Kalman1}]\label{lem 1}
Let $G=(V\cup E, \varepsilon)$ be a connected bipartite graph. Let $f$ be a hypertree of $G$. Then for any non-empty subset $E' \subset E$,  if $E'$ is not tight at $f$, then $f$ is a hypertree  so that a transfer of valence is possible from  some element of $E\setminus E'$ to  some element of $E'$.
\end{lemma}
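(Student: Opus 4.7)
The plan is to exhibit the transfer of valence by a local modification of a spanning tree realizing $f$. Fix a spanning tree $\tau$ of $Bip\,\mathcal{H}$ that induces $f$, so $d_{\tau}(e) = f(e)+1$ for each $e \in E$. By Theorem \ref{thm}, the fact that $E'$ is non-tight at $f$ is equivalent to $\tau|_{E'}$ failing to be a spanning forest of $Bip\,\mathcal{H}|_{E'}$, and since $\tau|_{E'}$ is acyclic this forces $\tau|_{E'}$ to have strictly more components than $Bip\,\mathcal{H}|_{E'}$. Consequently there exist two vertices lying in a common component of $Bip\,\mathcal{H}|_{E'}$ but in distinct components of $\tau|_{E'}$. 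Walking along any path joining them in $Bip\,\mathcal{H}|_{E'}$, I can single out an edge $\alpha$ of $Bip\,\mathcal{H}|_{E'}$ whose two endpoints still belong to different components of $\tau|_{E'}$; let $e \in E'$ denote its $E$-endpoint.

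Adjoining $\alpha$ to $\tau$ produces a unique cycle $C$. The key step is to show that $C$ must contain an edge $\beta$ whose $E$-endpoint $e'$ belongs to $E \setminus E'$. Indeed, if every edge of $C$ had its $E$-endpoint in $E'$, then the $\tau$-path joining the endpoints of $\alpha$ would lie entirely within $\tau|_{E'}$, placing those endpoints in the same component of $\tau|_{E'}$ and contradicting the choice of $\alpha$. This verification is the one non-routine point of the argument; the rest is bookkeeping.

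Now set $\tau' = (\tau \setminus \{\beta\}) \cup \{\alpha\}$. Because removing any edge of the fundamental cycle breaks it, $\tau'$ is again a spanning tree of $Bip\,\mathcal{H}$. Since $\alpha$ is incident to $e \in E'$ while $\beta$ is incident to a distinct hyperedge $e' \in E \setminus E'$, and no other $E$-vertex sees its $\tau$-degree change, the hypertree $f'$ realized by $\tau'$ satisfies $f'(e) = f(e)+1$, $f'(e') = f(e')-1$, and $f'$ agrees with $f$ on $E \setminus \{e,e'\}$. This is exactly a transfer of valence from $e' \in E \setminus E'$ to $e \in E'$, completing the proof.
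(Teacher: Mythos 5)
The paper offers no proof of this lemma: it is imported verbatim from K\'alm\'an \cite{Kalman1}, so there is no in-paper argument to measure yours against. Your proof is correct, self-contained, and is essentially the standard fundamental-cycle exchange argument (which is also how the original source argues): translate non-tightness into the failure of $\tau|_{E'}$ to be a spanning forest of $G|_{E'}$ via Theorem \ref{thm}, choose a non-tree edge $\alpha$ of $G|_{E'}$ whose endpoints $\tau|_{E'}$ fails to connect, and exchange it against an edge $\beta$ of the fundamental cycle whose $E$-endpoint lies outside $E'$; your justification that such a $\beta$ must exist is exactly the right point to isolate, and the resulting degree bookkeeping gives a transfer of valence in the required direction (decrease outside $E'$, increase inside). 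One step deserves tightening: from ``$\tau|_{E'}$ is not a spanning forest'' you infer that $\tau|_{E'}$ has strictly more components than $G|_{E'}$, but under the paper's definition $\tau|_{E'}$ may instead simply omit a vertex $v\in\bigcup E'$ whose $\tau$-neighbours all lie in $E\setminus E'$, in which case $v$ belongs to no component of $\tau|_{E'}$ and the component comparison as literally stated does not apply. The repair is routine: regard $\tau|_{E'}$ as a spanning subgraph of $G|_{E'}$ with such $v$ as isolated vertices; the identity $\sum_{e\in E'}f(e)=|V(\tau|_{E'})\cap V|-c(\tau|_{E'})$ against $\mu(E')=|\bigcup E'|-c(E')$ then yields precisely your component inequality, and your cycle argument already handles this case, since the $\tau$-edge of the fundamental cycle at such a $v$ automatically has its $E$-endpoint outside $E'$. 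So the omission is cosmetic rather than substantive, and the proof stands.
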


\begin{lemma}[\cite{Kalman1}]\label{lem 2}
Let $G=(V\cup E, \varepsilon)$ be a connected bipartite graph, and $f$ be a hypertree of $G$. Let $e_{i}\in E$ $(i=1,2,3)$. If  $e_{1}$ can transfer valence to $e_{2}$  and $e_{2}$ can transfer valence to $e_{3}$  with respect to  $f$,  then  $e_{1}$ can transfer valence to $e_{3}$  with respect to  $f$.
\end{lemma}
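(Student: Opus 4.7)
My plan is to verify directly that the function $f''' := f - \mathbf{1}_{e_1} + \mathbf{1}_{e_3}$ is a hypertree by checking the three conditions from Theorem \ref{thm 1}. The cases $e_1 = e_3$, $e_1 = e_2$, or $e_2 = e_3$ are immediate (either $f''' = f$, or $f''' = f''$, or $f''' = f'$), so I may assume $e_1, e_2, e_3$ are pairwise distinct. Condition (2) is automatic, since $f'''$ and $f$ have the same total sum. Condition (1) reduces to showing $f(e_1) \ge 1$ (so that $f'''(e_1) \ge 0$) and $f(e_3) \le d_{Bip\mathcal H}(e_3) - 2$ (so that $f'''(e_3) \le d_{Bip\mathcal H}(e_3) - 1$); the first uses that $f'$ is a hypertree, and the second uses that $f''$ is a hypertree applied at $e_3$.

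The crux is condition (3): $\sum_{e \in E'} f'''(e) \le \mu(E')$ for every $E' \subset E$. I would split into four cases according to whether $e_1, e_3$ lie in $E'$. In three of the four cases, $f'''(E') \le f(E')$, so the inequality for $f$ gives it for $f'''$ immediately. The only nontrivial case is $e_1 \notin E'$ and $e_3 \in E'$, where $f'''(E') = f(E') + 1$, so I need to show $f(E') + 1 \le \mu(E')$.

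For this last case, I split once more on $e_2$. If $e_2 \notin E'$, then after the transfer $e_2 \to e_3$ the sum over $E'$ increases by $1$, so condition (3) applied to $f''$ yields $f(E') + 1 = f''(E') \le \mu(E')$. If instead $e_2 \in E'$, then the transfer $e_1 \to e_2$ (with $e_1 \notin E'$, $e_2 \in E'$) increases the sum over $E'$ by $1$, so condition (3) applied to $f'$ yields $f(E') + 1 = f'(E') \le \mu(E')$. Either way the desired bound holds.

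The main obstacle is purely the case bookkeeping; there is no deep idea beyond the observation that Theorem \ref{thm 1} characterizes hypertrees as the integer points of a polymatroid base polytope, so verifying closure under a transfer of valence reduces to checking the submodular inequalities, and those inequalities can only be violated on subsets that separate the endpoints of the transfer in a specific way. The hypotheses supplying hypertrees $f'$ and $f''$ are exactly strong enough to rule out every such separator.
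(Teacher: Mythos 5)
The paper does not prove Lemma \ref{lem 2} at all --- it is imported from \cite{Kalman1} as a black box --- so there is no in-paper argument to compare against; I can only assess your proof on its own terms. It is correct, and the case analysis is exactly right: for $f'''=f-\mathbf{1}_{e_1}+\mathbf{1}_{e_3}$, conditions (1) and (2) follow from the corresponding conditions for $f'$ at $e_1$ and for $f''$ at $e_3$; for condition (3) the sum over $E'$ increases only when $e_3\in E'$ and $e_1\notin E'$, and then $f(E')+1\le\mu(E')$ is precisely condition (3) applied to $f''$ if $e_2\notin E'$ and to $f'$ if $e_2\in E'$. The one thing to flag is that your argument silently uses the \emph{converse} of Theorem \ref{thm 1}: you need that every function $f\colon E\to\mathbb{N}$ satisfying (1)--(3) is realised by some spanning tree, whereas Theorem \ref{thm 1} as stated in this paper gives only the necessary direction. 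That converse is true --- it is the statement that hypertrees are exactly the lattice points of the polymatroid base polytope associated with $\mu$, proved in \cite{Kalman1}, and the authors themselves lean on it in the Remark following Lemma \ref{lem 3} --- so this is a missing citation rather than a mathematical gap, but a self-contained write-up should state and reference that characterisation explicitly before invoking it.
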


\begin{lemma}\label{lem 20}
Let $G=(V\cup E, \varepsilon)$ be a connected bipartite graph, and let $f$ be a hypertree of $G$. Suppose that $e, e'\in E$ and $e\neq e'$. Then $e$ can transfer valence to $e'$  with respect to $f$ if and only if  $f(e)\neq 0$, and every subset $E'\subset  E$, which contains $e'$ and does not contain $e$, is not tight at $f$.
\end{lemma}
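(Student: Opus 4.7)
The plan is to prove the two directions separately, with the forward direction being a direct consequence of the defining properties of a hypertree and the backward direction requiring Lemmas \ref{lem 1} and \ref{lem 2}.

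For the forward direction ($\Rightarrow$), suppose $e$ can transfer valence to $e'$, producing a hypertree $f'$. Since $f'(e) = f(e) - 1 \geq 0$ by Theorem \ref{thm 1}(1), we must have $f(e) \geq 1$. Next, suppose for contradiction there were a set $E' \subset E$ containing $e'$ but not $e$, which is tight at $f$. Then since $e' \in E'$ contributes $+1$ and $e \notin E'$ contributes $0$ to the transfer, we compute $\sum_{e'' \in E'} f'(e'') = \sum_{e'' \in E'} f(e'') + 1 = \mu(E') + 1$, violating Theorem \ref{thm 1}(3) applied to the hypertree $f'$. This yields the contradiction.

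The backward direction ($\Leftarrow$) is the substantive part. Given the hypotheses, define
\[
T = \{e\} \cup \{\, e'' \in E \setminus \{e\} : e \text{ can transfer valence to } e'' \text{ at } f \,\}.
\]
The key claim is that $E \setminus T$ is either empty or tight at $f$. Indeed, if $E \setminus T$ were a non-empty subset that is not tight, then Lemma \ref{lem 1} would give $e_1 \in T$ and $e_2 \in E \setminus T$ such that $e_1$ can transfer valence to $e_2$ at $f$. If $e_1 = e$, this directly contradicts $e_2 \notin T$. If $e_1 \neq e$, then $e$ can transfer to $e_1$ by definition of $T$, and $e_1$ can transfer to $e_2$, so Lemma \ref{lem 2} gives that $e$ can transfer to $e_2$, again contradicting $e_2 \notin T$.

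It remains to show $e' \in T$. If $E \setminus T = \emptyset$, then $T = E$ and in particular $e' \in T$. Otherwise $E \setminus T$ is tight at $f$ and does not contain $e$. If $e'$ were not in $T$, then $E \setminus T$ would be a subset containing $e'$ but not $e$ and tight at $f$, contradicting the hypothesis. Hence $e' \in T$, and since $e' \neq e$ this means $e$ can transfer valence to $e'$ at $f$, completing the proof. The main obstacle is the backward direction, where the trick is to package all single-step transfers out of $e$ into the set $T$ and then exploit transitivity (Lemma \ref{lem 2}) together with the non-tightness propagation (Lemma \ref{lem 1}) to show that $T$ must already include every $e'$ not blocked by a tight subset.
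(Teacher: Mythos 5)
Your proof is correct, and it uses the same two ingredients as the paper (Lemma \ref{lem 1} to extract a transfer into a non-tight set, Lemma \ref{lem 2} for transitivity), but it packages them differently. The paper works from the other end: it starts with $E_1=\{e'\}$ and iteratively enlarges the set of hyperedges that can transfer valence \emph{to} $e'$, invoking the hypothesis at every stage to keep the growing set non-tight and concluding "continue the above process" until $e$ is absorbed --- an implicit induction on a strictly increasing chain of subsets. You instead form, in one step, the out-reachability set $T$ of $e$ and show via a single application of Lemmas \ref{lem 1} and \ref{lem 2} that $E\setminus T$ must be empty or tight, after which the hypothesis is used exactly once to force $e'\in T$. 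This closure-plus-contradiction formulation avoids the iteration entirely and is logically tighter; it also makes visible that the hypothesis $f(e)\neq 0$ is never actually needed in the sufficiency direction (it is absorbed into Lemma \ref{lem 1}, since if $f(e)=0$ then $T=\{e\}$ and $E\setminus\{e\}$ would have to be tight, contradicting the other hypothesis). You also write out the necessity direction, which the paper dismisses as obvious, and your computation $\sum_{e''\in E'}f'(e'')=\mu(E')+1$ violating Theorem \ref{thm 1}(3) is exactly the right justification.
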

\begin{proof}
The necessity is obvious. For sufficiency, let us take $E_{1}=\{e'\}$. Since $E_{1}$ is not tight at $f$ some element of $E\setminus E_{1}$ can transfer valence to $e'$ for $f$ by Lemma \ref{lem 1}. Let $U_{1}$ be the set consisting of  all elements of $E\setminus E_{1}$ that can transfer valence to $e'$ for $f$. If $e\in U_{1}$, then the conclusion is true. If $e\notin U_{1}$, then we take $E_{2}=U_{1}\cup E_{1}$. Note that $E_{2}$ is not tight at $f$. Then some element of $E\setminus E_{2}$ can transfer  valence to some element of $E_{2}$ for $f$ by Lemma \ref{lem 1}. Let $U_{2}$ be the set consisting of  all elements of $E\setminus E_{2}$ that can transfer  valence to some element of $E_{2}$ for $f$. It is obvious that $E_{1}$ is a proper subset of $E_{2}$. Moreover, all elements of $E_{3}=U_{2}\cup E_{2}$ (except for $e'$) can transfer  valence to $e'$ for $f$ by Lemma \ref{lem 2}, and  $E_{2}$ is a proper subset of $E_{3}$. Continue the above process, we will eventually obtain that $e$ can transfer valence to $e'$ for $f$.
\end{proof}

\begin{lemma}\label{lem 3}
Let $G=(V\cup E, \varepsilon)$ be a connected bipartite graph, and let $f$ be a hypertree of $G$. Given an order on $E$, then the hyperedge $e\in E$ is internally inactive with respect to  $f$  if and only if $f(e)\neq 0$, and there exists a hyperedge $e'<e$  so that every subset $E'\subset  E$, which contains $e'$ and does not contain $e$, is not tight at $f$.
\end{lemma}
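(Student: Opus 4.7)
The plan is to deduce this lemma as a direct corollary of Lemma~\ref{lem 20} together with Definition~\ref{def 2}. By the definition of internal inactivity, $e$ is internally inactive with respect to $f$ if and only if there exists some $e'<e$ such that a transfer of valence from $e$ to $e'$ is possible at $f$, i.e.\ the function obtained by decreasing $f(e)$ by $1$ and increasing $f(e')$ by $1$ is still a hypertree. So the task reduces to characterising the existence of such an $e'$.

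For the forward direction, I would assume $e$ is internally inactive, pick an $e'<e$ to which $e$ can transfer valence at $f$, and then invoke Lemma~\ref{lem 20} to conclude both that $f(e)\neq 0$ and that every $E'\subset E$ containing $e'$ but not $e$ fails to be tight at $f$. Note that $f(e)\neq 0$ is a property of $e$ alone and does not depend on the choice of $e'$, so it factors out of the existential statement cleanly.

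For the backward direction, I would assume that $f(e)\neq 0$ and that some $e'<e$ witnesses the tightness condition, and then apply Lemma~\ref{lem 20} in the opposite direction to conclude that $e$ can transfer valence to this $e'$; since $e'<e$, Definition~\ref{def 2} then gives that $e$ is internally inactive.

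No genuine obstacle is expected; the content of the lemma is essentially a repackaging of Lemma~\ref{lem 20} by restricting the target hyperedge to be strictly smaller than $e$ in the fixed order on $E$. The only thing to be careful about is bookkeeping of the quantifiers, making sure the clause ``there exists $e'<e$'' is correctly attached only to the tightness condition and not to the condition $f(e)\neq 0$, which is intrinsic to $e$.
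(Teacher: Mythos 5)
Your proposal is correct and follows essentially the same route as the paper: unfold Definition~\ref{def 2} to reduce internal inactivity to the existence of some $e'<e$ receiving a valence transfer from $e$, then apply Lemma~\ref{lem 20} to characterise that transfer. Your remark that $f(e)\neq 0$ factors out of the existential quantifier is a correct (and slightly more careful) piece of bookkeeping that the paper leaves implicit.
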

\begin{proof}
The hyperedge $e$ is internally inactive with respect to  $f$, if and only if for  some hyperedge $e'<e$, $e'$ can be transferred valence from $e$ with respect to $f$ by the definition.
$e$ can transfer valence to $e'$ with respect to $f$ if and only if $f(e)\neq 0$, and every subset $E'\subset  E$, which contains $e'$ and does not contain $e$, is not tight at $f$ by Lemma \ref{lem 20}.
Thus, the conclusion is true.
\end{proof}

\begin{remark}
Lemmas \ref{lem 20} and \ref{lem 3} can also be obtained by using Theorem \ref{thm 1} (3) directly.
\end{remark}

Let $G=(V\cup E, \varepsilon)$ be a connected bipartite graph, and $e_{1}, e_{2}\in E$ and $e_{1}\neq e_{2}$. Let $f_{1}$ and $f_{2}$ be hypertrees of $G$ with $f_{1}(e_{1})<f_{2}(e_{1})$ and $f_{1}(e)=f_{2}(e)$ for all $e\in E$ and $e\neq e_{1},e_{2}$. The following lemma is known.

\begin{lemma}[\cite{Kalman1}]\label{lem 4}
If $f_{1}$ is a hypertree such that valence can  be transferred from $e_{2}$ to $e$, then $f_{2}$ is a hypertree such that valence can  be transferred from $e_{1}$ to $e$.
\end{lemma}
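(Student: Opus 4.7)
The plan is to read off both the hypothesis and the conclusion through the characterization of possible valence transfers given by Lemma \ref{lem 20}: $e_1$ can transfer valence to $e$ with respect to $f_2$ if and only if $f_2(e_1) \neq 0$ and no subset $E' \subset E$ with $e \in E'$ and $e_1 \notin E'$ is tight at $f_2$. Under this translation the lemma becomes a purely arithmetic statement about how the sums $\sum_{E'} f_1$ and $\sum_{E'} f_2$ compare to $\mu(E')$.

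First, Theorem \ref{thm 1}(2) gives $\sum_{e\in E} f_1(e)=\sum_{e\in E} f_2(e)=|V|-1$, and since $f_1$ and $f_2$ agree off of $\{e_1,e_2\}$, we obtain a positive quantity
\[
\delta := f_2(e_1) - f_1(e_1) = f_1(e_2) - f_2(e_2) > 0.
\]
In particular $f_2(e_1) > 0$, which handles the first condition of Lemma \ref{lem 20}. By the hypothesis that valence can be transferred from $e_2$ to $e$ with respect to $f_1$, the ``only if'' direction of Lemma \ref{lem 20} gives that every subset of $E$ containing $e$ but not $e_2$ fails to be tight at $f_1$.

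Next I would verify the tightness condition for $f_2$. Fix an arbitrary $E' \subset E$ with $e \in E'$ and $e_1 \notin E'$, and split on whether $e_2 \in E'$. If $e_2 \notin E'$, then $f_1$ and $f_2$ coincide on $E'$, so $\sum_{E'} f_1 = \sum_{E'} f_2$; since by hypothesis $E'$ is not tight at $f_1$, it is not tight at $f_2$ either. If $e_2 \in E'$, then $\sum_{E'} f_2 = \sum_{E'} f_1 - \delta$, and combining with Theorem \ref{thm 1}(3), which gives $\sum_{E'} f_1 \leq \mu(E')$, and with $\delta > 0$, we obtain the strict inequality $\sum_{E'} f_2 < \mu(E')$, so $E'$ is not tight at $f_2$. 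A final application of the ``if'' direction of Lemma \ref{lem 20} then yields the desired transfer of valence from $e_1$ to $e$ with respect to $f_2$.

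The proof is really a short calculation once the characterization in Lemma \ref{lem 20} is in hand; the only subtle point is the case split on $e_2 \in E'$, where I rely on the conservation identity from Theorem \ref{thm 1}(2) to convert the difference between $f_1$ and $f_2$ at $e_1$ into an equal-and-opposite difference at $e_2$, and on the submodular upper bound from Theorem \ref{thm 1}(3) to obtain the strict inequality. I do not foresee a significant obstacle.
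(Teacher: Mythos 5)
Your proof is correct. Note first that the paper itself offers no argument for Lemma~\ref{lem 4}: it is stated as a known result quoted from \cite{Kalman1}, so there is no in-paper proof to compare against. Your derivation is a clean, self-contained deduction from the tightness characterization of Lemma~\ref{lem 20}, which is very much in the spirit of how this paper uses that lemma elsewhere (e.g.\ in the proofs of Lemmas~\ref{lem 5} and~\ref{lem 8}). The key steps all check out: conservation of total valence (Theorem~\ref{thm 1}(2)) together with agreement of $f_1$ and $f_2$ off $\{e_1,e_2\}$ gives $\delta=f_2(e_1)-f_1(e_1)=f_1(e_2)-f_2(e_2)>0$, hence $f_2(e_1)\neq 0$; the case $e_2\notin E'$ transfers non-tightness from $f_1$ to $f_2$ verbatim since the two hypertrees agree on $E'$; and the case $e_2\in E'$ uses $\sum_{E'}f_2=\sum_{E'}f_1-\delta<\mu(E')$ via the universal bound of Theorem~\ref{thm 1}(3). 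The only point worth making explicit is that Lemma~\ref{lem 20} requires the two hyperedges involved to be distinct, so your argument implicitly assumes $e\neq e_1$ (and $e\neq e_2$); this restriction is already implicit in the statement of Lemma~\ref{lem 4} itself, so it is not a gap, but a sentence acknowledging it would make the write-up airtight.
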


\begin{lemma}\label{lem 5}
Given an order on $E$, and suppose that the hyperedge $e>e_{1}$. If $e$ is internally active with respect to $f_{1}$, then it is also internally active with respect to $f_{2}$.
\end{lemma}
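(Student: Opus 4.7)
The plan is to prove the contrapositive: assume $e$ is internally inactive with respect to $f_2$ and deduce the same for $f_1$. By Lemmas \ref{lem 3} and \ref{lem 20}, the hypothesis produces some $e' < e$ with $f_2(e) > 0$ such that every subset $E' \subseteq E$ with $e' \in E'$ and $e \notin E'$ is not tight at $f_2$.

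I would first record how tightness at $f_1$ compares with tightness at $f_2$. Setting $d = f_2(e_1) - f_1(e_1) > 0$, Theorem \ref{thm 1}(2) forces $f_1(e_2) - f_2(e_2) = d$, so for any $S \subseteq E$, $\sum_{h \in S} f_1(h) - \sum_{h \in S} f_2(h) = d([e_2 \in S] - [e_1 \in S])$. This yields two observations: (a) any $S$ containing $e_1$ but not $e_2$ fails to be tight at $f_1$, since otherwise $\sum_{h \in S} f_2(h) = \mu(S) + d > \mu(S)$ would contradict Theorem \ref{thm 1}(3); and (b) if $S$ contains both or neither of $e_1, e_2$, then $S$ is tight at $f_1$ iff tight at $f_2$. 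A small case analysis on whether $e = e_2$ also shows $f_1(e) > 0$.

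Next I attempt the natural candidate $e'' := e'$. If no subset containing $e'$ and not $e$ is tight at $f_1$, then Lemma \ref{lem 20} certifies $e$ transferable to $e'$ in $f_1$, and we are done. Otherwise, fix such a tight set $E'$ at $f_1$; since $E'$ is not tight at $f_2$, observations (a) and (b) force $e_2 \in E'$ and $e_1 \notin E'$, and in particular $e \neq e_2$, $e' \neq e_1$. I then pivot to the target $e'' := e_1 < e$. Suppose for contradiction some $E'' \ni e_1$ with $e \notin E''$ is tight at $f_1$; observation (a) forces $e_2 \in E''$. Applying Theorem \ref{thm 2} yields that $E' \cup E''$ is tight at $f_1$; containing both $e_1$ and $e_2$, it is also tight at $f_2$ by (b). But it contains $e'$ and not $e$, contradicting our hypothesis on $f_2$. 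Hence no such $E''$ exists, so by Lemma \ref{lem 20}, $e$ transfers to $e_1$ in $f_1$, showing $e$ is internally inactive with respect to $f_1$.

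The main obstacle is that a tight set at $f_1$ preventing the choice $e'' = e'$ need not be tight at $f_2$, so the hypothesis on $f_2$ does not directly carry over. The resolution exploits the asymmetry in (a) to switch the target to $e_1$, and uses the submodular union property (Theorem \ref{thm 2}) to merge an $f_1$-tight obstruction at $e_1$ with the one at $e'$, producing a set that is forced to be tight at $f_2$ and thereby contradicts the hypothesis.
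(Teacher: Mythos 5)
Your proof is correct, but it runs in the opposite logical direction from the paper's. The paper argues directly: given that $e$ is internally active at $f_{1}$, it picks for each $e'<e$ a tight set $U'$ witnessing that $e$ cannot transfer to $e'$, forms the single union $U=\bigcup_{e'<e}U'$ (tight at $f_{1}$ by Theorem \ref{thm 2}), and observes that since $e_{1}\in U$ one has $\sum_{g\in U}f_{1}(g)\leq\sum_{g\in U}f_{2}(g)\leq\mu(U)$, so $U$ is tight at $f_{2}$ and simultaneously blocks every transfer from $e$ at $f_{2}$. You instead prove the contrapositive: starting from a witness $e'$ of inactivity at $f_{2}$, you either reuse $e'$ as the witness at $f_{1}$ or, when an $f_{1}$-tight obstruction $E'\ni e'$ exists, you show it must contain $e_{2}$ but not $e_{1}$, pivot the target to $e_{1}$, and use the union of two $f_{1}$-tight sets (one through $e_{1}$, one through $e'$) to manufacture an $f_{2}$-tight set contradicting the hypothesis. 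Both arguments rest on the same two ingredients --- closure of tight sets under union and the sum comparison $\sum_{S}f_{1}-\sum_{S}f_{2}=d([e_{2}\in S]-[e_{1}\in S])$ combined with Theorem \ref{thm 1}(3) --- but the paper's direct version is shorter because one universal tight set settles all targets at once, whereas your version isolates the useful structural fact that every $f_{1}$-tight set containing $e_{1}$ must also contain $e_{2}$ and hence stays tight at $f_{2}$; the price is the extra case analysis and the target-switching step. All steps in your argument check out, including the verification that $f_{1}(e)>0$ in both the $e=e_{2}$ and $e\notin\{e_{1},e_{2}\}$ cases.
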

\begin{proof}
If the hyperedge $e$  is internally active with respect to  $f_{1}$, then (i) $f_{1}(e)=0$ or (ii) there exists a subset $U'\subset  E$, which contains $e'$ and does not contain $e$, is tight at $f_{1}$ for any  hyperedge $e'<e$ by Lemma \ref{lem 3}. If (i) holds, $e$ can not be $e_2$, then $f_{2}(e)=f_{1}(e)=0$, clearly, $e$ is internally active with respect to $f_{2}$. If (ii) is true, then in particular, there exists a subset $U_{1}\subset E$, which contains $e_{1}$ and does not contain $e$, is tight at $f_{1}$ since $e_{1}<e$. Take $U=\bigcup\limits_{e'<e} U'$. Then the subset $U\subset  E$, which contains any $e'<e$ and does not contain $e$, is tight at $f_{1}$ by Theorem \ref{thm 2}. Since $e_1\in U$ we have $\sum\limits_{g\in U}f_{1}(g)\leq \sum\limits_{g\in U}f_{2}(g)$. Thus $U$ is also tight at $f_{2}$. This implies that $e$ cannot transfer valence to any $e'<e$ in $f_{2}$, that is, $e$  is internally active with respect to $f_{2}$.
\end{proof}

\begin{lemma} \label{lem 6}
Let $G=(V\cup E, \varepsilon)$ be a connected bipartite graph. If there exists a hypertree with internal inactivity $k$ in $G$, then there exists a hypertree with internal inactivity $k-1$ in $G$ for any integer $k\geq 1$.
\end{lemma}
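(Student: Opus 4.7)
Given a hypertree $f$ with $\overline{\iota}(f) = k \geq 1$, my plan is to produce a hypertree $f'$ with $\overline{\iota}(f') = k-1$ via a single, carefully chosen transfer of valence. Let $e$ be the smallest internally inactive hyperedge with respect to $f$, so that every $g < e$ is internally active under $f$; by Lemmas \ref{lem 3} and \ref{lem 20}, there exists some $e' < e$ to which valence can be transferred from $e$, and I will take $e'$ to be the extremal such index (either smallest or largest, chosen so that the ensuing bookkeeping works). Let $f'$ be the resulting hypertree. The strategy is to verify that as we pass from $f$ to $f'$, exactly one hyperedge changes its internal activity status, namely $e$ itself, flipping from inactive to active.

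The activity-preservation analysis goes hyperedge-by-hyperedge. For $g > e$, two applications of Lemma \ref{lem 5} --- first with $(f_1,f_2,e_1) = (f,f',e')$, then with $(f_1,f_2,e_1) = (f',f,e)$ --- give both implications, so the internal activity of $g$ is preserved. For $g$ with $e' < g < e$, the hyperedge $g$ is active under $f$ by the minimality of $e$, and Lemma \ref{lem 5} preserves this under $f'$. For $g \leq e'$ with $g \neq e'$, the active status of $g$ under $f$ is witnessed, for each $g^* < g$, by a subset tight at $f$; I would invoke Theorem \ref{thm 2} on unions and intersections of tight sets, together with Lemma \ref{lem 20} applied to the valid transfer $e \to e'$ (which rules out any tight subset containing $e'$ but not $e$), to transport these witnesses to corresponding tight subsets at $f'$ that preserve the activity of $g$.

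The two delicate cases are $g = e$ and $g = e'$, since their valences are the ones modified. For $g = e$ one must exhibit, for every $e^* < e$, a subset $E^*$ tight at $f'$ that contains $e^*$ and avoids $e$; the extremal choice of $e'$, combined with Lemma \ref{lem 20} and Theorem \ref{thm 2}, should furnish these witnesses (the subcase $f(e) = 1$ is immediate since then $f'(e) = 0$). For $g = e'$, since $f'(e') \geq 1$ the vacuous-activity option is no longer automatic, and the tight-set witnesses for smaller hyperedges must again be transported with care. The main obstacle is precisely this coupled boundary analysis at $e$ and $e'$: the transfer redistributes which subsets are tight (only those containing exactly one of $\{e, e'\}$ change status), and one must verify that the extremal choice of $e'$ forces precisely one flip in the inactivity count --- neither zero, nor more than one. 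The submodularity of $\mu$ encoded in Theorem \ref{thm 2} will be the essential structural tool, and its interplay with the explicit tight-set characterizations of Lemmas \ref{lem 20} and \ref{lem 3} is what I expect to make the exact decrement by one possible.
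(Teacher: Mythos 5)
Your overall plan---transfer valence from the smallest internally inactive hyperedge $e$ to a well-chosen smaller hyperedge $e'$ and argue that only $e$ changes status---is the right skeleton, and your treatment of the hyperedges other than $e$ and $e'$ is essentially the paper's (Lemma~\ref{lem 5} for the large hyperedges, transported tight-set witnesses via Theorem~\ref{thm 2} for the small ones). But there is a genuine gap at the case $g=e$: a single transfer need not make $e$ internally active, and no choice of recipient $e'$ (smallest or largest) repairs this. Concretely, take $K_{3,3}$ with hyperedges $e_1<e_2<e_3$ and $|V|=3$, and the hypertree $f=(0,0,2)$. Its internal inactivity is $1$, the only inactive hyperedge is $e_3$, and the possible recipients are $e_1$ and $e_2$. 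Transferring to $e_1$ yields $(1,0,1)$, in which $e_3$ can still transfer to $e_2$, so $e_3$ remains inactive and the inactivity is still $1$; transferring to $e_2$ yields $(0,1,1)$, which has inactivity $2$. So the quantity you want to decrement does not decrease after one step, and the ``coupled boundary analysis at $e$'' that you flag as the main obstacle is in fact unprovable from your hypotheses.

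The paper closes exactly this hole with an additional extremal choice that your proposal omits: among \emph{all} hypertrees with internal inactivity $k$ whose smallest internally inactive hyperedge is $e_m$, it fixes one, $f$, for which the value $f(e_m)$ is minimal, and only then transfers to the smallest admissible recipient $e_n$. The status of every hyperedge other than $e_m$ is shown to be preserved (by the arguments you sketch, plus a composition-of-transfers argument via Lemmas~\ref{lem 2} and \ref{lem 4} to handle $e_n$ itself), and the claim that $e_m$ becomes active is then obtained \emph{indirectly}: if it did not, the new hypertree would again have inactivity $k$ with $e_m$ as its smallest inactive hyperedge but a strictly smaller value there, contradicting the minimality of $f(e_m)$. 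Equivalently, you could iterate your single transfer until $e_m$ can no longer donate to any smaller hyperedge; either way, a well-ordering argument on $f(e_m)$ is indispensable, and the extremal choice of $e'$ alone cannot supply it. (In the example above, the paper's choice starts from $(1,0,1)$ rather than $(0,0,2)$, and one transfer then lands on $(2,0,0)$, which has inactivity $0$.)
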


\begin{proof}
Given an order on $E$, assume that $f_{1}$ is a hypertree with internal inactivity $k$ in $G$, and $e_{m}$ is the smallest internally inactive hyperedge  with respect to $f_{1}$. Let $f$ be a hypertree with internal inactivity $k$ and the hyperedge $e_{m}$ is the smallest internally inactive hyperedge  with respect to $f$ so that the entry $f(e_{m})$ of $e_{m}$ is smallest. Let $e_{n}$ be the smallest hyperedge that can be transferred valence from $e_{m}$ with respect to $f$. Then there is the hypertree $g$ with $g(e_{m})=f(e_{m})-1$, $g(e_{n})=f(e_{n})+1$ and $g(e)=f(e)$ for all $e\neq e_{m},e_{n}$.  Next we prove that the hyperedge $e$ is internally active with respect to $g$ if and only if $e$ is internally active with respect to $f$ for every hyperedge $e\neq e_{m}$ and will show that $g$ is a hypertree with internal inactivity $k-1$.

For any hyperedge $e$ with $e<e_{n}$, since $e$ is internally active with respect to $f$, we have (i) $f(e)=0$ or (ii) there exists $E_{1}\subset  E$, which contains $e'$ and does not contain $e$,  is  tight at $f$  for each hyperedge $e'<e$ by Lemma \ref{lem 3}. If (i) holds, then $g(e)=f(e)=0$, clearly, $e$  is internally active with respect to  $g$. Assume that (ii) is true. Note that $e'<e<e_{n}<e_{m}$. We have that $e_{m}$  can not transfer valence to $e'$ with respect to $f$. Then there exists $E_{2}\subset E$, which contains $e'$ and does not contain $e_{m}$,  is  tight at $f$ by Lemma \ref{lem 20}. Take $E'=E_{1}\cap E_{2}$.  Then $E'$, which contains $e'$ and does not contain $e_{m}$ and $e$,  is  tight at $f$ by Theorem \ref{thm 2}. Note that $\sum\limits_{e\in E'}f(e)\leq \sum\limits_{e\in E'}g(e)$ since $e_m\notin E'$. We have that  $E'$ is  tight at $g$, that is,  there exists $E'\subset E$, which contains $e'$ and does not contain $e$,  is  tight at $g$  for each hyperedge $e'<e$. Thus, by Lemma \ref{lem 3}, $e$ is internally active with respect to $g$.

For the hyperedge $e_{n}$, we claim $e_{n}$ is internally active with respect to $g$. Assume that the opposite is true. Then there is a hyperedge $e'<e_{n}$ that can be transferred valence from $e_{n}$ with respect to $g$. Then there exists the hypertree $f'$ satisfying $f'(e_{m})=f(e_{m})-1$ and $f'(e')=f(e')+1$ and $f'(e)=f(e)$ for all $e\in E$ and $e\neq e_{m},e'$, that is, $e_{m}$ can  transfer valence to $e'$ with respect to $f$. This contradicts  the choice of the hyperedge $e_{n}$.

For the hyperedge $e$ with $e>e_{n}$ that is internally active with respect to $f$ (in fact, $e$ is internally active  with respect to $f$ if $e_{n}<e<e_{m}$), $e$ is also internally active with respect to $g$ by Lemma \ref{lem 5}.

For the hyperedge $e>e_{m}$ that is internally inactive with respect to $f$, we claim that $e$ is internally inactive with respect to $g$. Otherwise, $e$ is internally active with respect to $f$ by Lemma \ref{lem 5}, a contradiction.

It is obvious that $e_{m}$ is internally active with respect to $g$ by the choice of the hypertree $f$. In fact, if $e_{m}$ is internally inactive with respect to $g$, then $g$ will be a hypertree with internal inactivity $k$ and the hyperedge $e_{m}$ is the smallest internally inactive hyperedge with respect to $g$.  Since $f(e_{m})>g(e_{m})$, it contradicts the choice of the hypertree $f$. Thus, $g$ is a hypertree with internal inactivity $k-1$.
\end{proof}
\begin{remark}
In the proof of Lemma \ref{lem 6}, the claim that $e_{n}$ is internally active with respect to $g$ can be obtained by using Lemma \ref{lem 4}.
\end{remark}
\begin{theorem}
The interior polynomial of any connected bipartite graph is interpolating.
\end{theorem}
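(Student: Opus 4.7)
The plan is to reduce the theorem directly to Lemma \ref{lem 6}, which already does the heavy combinatorial work of moving from a given internal inactivity to one smaller by one. Writing $I_G(x)=\sum_{i=0}^{m}a_{i}x^{i}$ with $a_{m}\neq 0$, the goal is to show $a_{i}\neq 0$ for every $i$ with $0\leq i\leq m$; equivalently, for every such $i$ there exists a hypertree $f$ of $G$ with $\overline{\iota}(f)=i$.

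The key step is simply to iterate Lemma \ref{lem 6}. Since $m=\deg I_{G}(x)$, there is at least one hypertree $f^{*}$ with $\overline{\iota}(f^{*})=m$, which anchors the top of the support. Applying Lemma \ref{lem 6} to $f^{*}$ yields a hypertree of internal inactivity $m-1$; applying it again yields one of internal inactivity $m-2$; and continuing in this way (the hypothesis $k\geq 1$ of Lemma \ref{lem 6} remains valid at every stage until we reach $0$) produces, for each integer $i$ in $\{0,1,\ldots,m\}$, a hypertree with internal inactivity exactly $i$. Consequently $a_{i}\neq 0$ for all such $i$, so $\mathrm{supp}(I_{G})=\{0,1,\ldots,m\}$ is an integer interval, and $I_{G}(x)$ is interpolating.

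There is essentially no further obstacle: once Lemma \ref{lem 6} is in hand, the theorem is a one-line descent argument. As a sanity check, the fact that the descent reaches $0$ is consistent with K\'alm\'an's result that $I_{G}$ has constant term $1$, so $0\in\mathrm{supp}(I_{G})$ in any case; the iterative application of Lemma \ref{lem 6} re-derives this, and more importantly fills in every intermediate value between $0$ and $m$, which is precisely what interpolation requires.
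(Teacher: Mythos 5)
Your argument is correct and is exactly the paper's own proof: the paper likewise deduces the theorem directly from Lemma \ref{lem 6} by descending from a hypertree of maximal internal inactivity. Your write-up just makes the iteration explicit.
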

\begin{proof}
It follows directly from Lemma \ref{lem 6}.
\end{proof}

\subsection{Degree}
\noindent

In this subsection, we will strengthen the result on the degree of the interior polynomial. It has been considered by K\'{a}lm\'{a}n in \cite{Kalman1}, and the author obtained the following result.

\begin{theorem}[\cite{Kalman1}]
Let $G=(V\cup E, \varepsilon)$ be a connected bipartite graph. Then the degree of the interior polynomial of $G$ is at most $\min\{|E|-1$, $|V|-1$\}.
\end{theorem}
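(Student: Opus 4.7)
The plan is to prove the pointwise bound $\overline{\iota}(f) \leq \min\{|E|-1,\,|V|-1\}$ for every hypertree $f$ of $G$; since the degree of $I_{G}(x) = \sum_{f} x^{\overline{\iota}(f)}$ equals $\max_{f}\overline{\iota}(f)$, this immediately yields the claim. By Theorem \ref{thm 3} the interior polynomial is independent of the chosen total order on $E$, so I am free to fix whatever order is convenient.

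First I would derive $\overline{\iota}(f) \leq |E|-1$ by a vacuous-activity argument. Let $e_{\min}$ be the smallest hyperedge under the chosen order. There is no $e' < e_{\min}$ at all, so Definition \ref{def 2} makes it impossible to transfer a unit of valence from $e_{\min}$ to any smaller hyperedge. Hence $e_{\min}$ is internally active with respect to every hypertree $f$, giving $\iota(f) \geq 1$ and therefore $\overline{\iota}(f) = |E|-\iota(f) \leq |E|-1$.

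Next I would deduce $\overline{\iota}(f) \leq |V|-1$ by combining Lemma \ref{lem 3} with Theorem \ref{thm 1}(2). Let $I(f) \subseteq E$ denote the set of hyperedges that are internally inactive with respect to $f$. By Lemma \ref{lem 3}, every $e \in I(f)$ satisfies $f(e) \neq 0$; since $f$ is $\mathbb{N}$-valued this means $f(e) \geq 1$ on $I(f)$. Consequently
\[
\overline{\iota}(f) \;=\; |I(f)| \;\leq\; \sum_{e \in I(f)} f(e) \;\leq\; \sum_{e \in E} f(e) \;=\; |V|-1,
\]
where the last equality is Theorem \ref{thm 1}(2). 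Taking the minimum of the two bounds and then the maximum over hypertrees yields the stated inequality for $\deg I_{G}$.

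There is no real obstacle here: once one extracts from Lemma \ref{lem 3} the observation that internal inactivity forces a hyperedge to carry at least one unit of $f$-mass, the bound $|V|-1$ is essentially forced by the hypertree normalisation $\sum_{e} f(e) = |V|-1$, and the bound $|E|-1$ is the one-line consequence that the smallest hyperedge in any ordering is automatically active. The only conceptual content is isolating the characterisation of internal inactivity provided by Lemma \ref{lem 3}, which the preceding subsection has already arranged.
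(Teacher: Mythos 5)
Your proof is correct: the bound $\overline{\iota}(f)\leq |E|-1$ follows because the smallest hyperedge is vacuously internally active, and the bound $\overline{\iota}(f)\leq |V|-1$ follows because every internally inactive hyperedge carries $f(e)\geq 1$ while $\sum_{e\in E}f(e)=|V|-1$. The paper states this theorem as a citation from K\'{a}lm\'{a}n without reproducing a proof, and your two pointwise estimates are exactly the standard argument (note that the implication ``internally inactive $\Rightarrow f(e)\neq 0$'' already follows directly from Definition \ref{def 2}, so the full strength of Lemma \ref{lem 3} is not needed).
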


The following properties hold for the interior polynomial and the exterior polynomial of bipartite graphs.

\begin{theorem}[\cite{Kalman1}]\label{thm 5}
Let $G_{1}=(V_1\cup E_1, \varepsilon_1)$ and $G_{2}=(V_2\cup E_2, \varepsilon_2)$ be two connected disjoint bipartite graphs.
\begin{enumerate}
\item[(1)] Let $G$ be the connected bipartite graph obtained by identifying a vertex $v_1\in V_1$ and a vertex $v_2\in V_2$ or identifying a vertex $e_1\in E_1$ and a vertex $e_2\in E_2$. Then $I_{G}(x)=I_{G_{1}}(x)I_{G_{2}}(x)$ and $X_{G}(y)=X_{G_{1}}(y)X_{G_{2}}(y)$.
\item[(2)] Let $G$ be the connected bipartite graph obtained by identifying one edge $(v_1,e_1)$ of $G_1$ and one edge $(v_2,e_2)$ of $G_2$, where $v_1\in V_1$, $v_2\in V_2$, $e_1\in E_1$, $e_2\in E_2$, $v_1$ and $v_2$ are identified, and $e_1$ and $e_2$ are identified. Then $I_{G}(x)=I_{G_{1}}(x)I_{G_{2}}(x)$ and $X_{G}(y)=X_{G_{1}}(y)X_{G_{2}}(y)$.
\end{enumerate}
\end{theorem}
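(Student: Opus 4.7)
The plan is to establish a bijection $B_G \simeq B_{G_1} \times B_{G_2}$ that, under compatibly chosen orderings on the hyperedge sets, is additive on internal (and, symmetrically, external) inactivity; summing $x^{\overline{\iota}(f)}$ across the bijection then yields the product formula. I would first set up the spanning-tree decomposition. In case (1) the identified vertex (whether in $V$ or in $E$) is a cut vertex of the bipartite graph $G$, so any spanning tree $\tau$ of $G$ restricts to spanning trees $\tau_i$ of $G_i$. In case (2) a short edge count, using $|\tau| = |V(G)|-1$ and the fact that $\tau \cap G_i$ is a connected subgraph covering every vertex of $G_i$, forces the common edge $(v,e)$ to lie in $\tau$; the cut-vertex argument at $v$ and $e$ then gives $\tau = \tau_1 \cup \tau_2$ with $\tau_i$ spanning trees of $G_i$ each containing $(v,e)$. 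Comparing degrees on the $E$-class yields $f|_{E_i \setminus \{e_i\}} = f_i|_{E_i \setminus \{e_i\}}$ together with $f(e) = f_1(e_1) + f_2(e_2) + 1$ (when two vertices of $E$ are identified in case (1)) or $f(e) = f_1(e_1) + f_2(e_2)$ (in case (2), where the shared edge is double-counted in $d_{\tau_1}(e_1) + d_{\tau_2}(e_2)$). The hypertree sum $\sum_{E_i} f_i = |V_i|-1$ then makes $f_i(e_i)$ determined by $f$, so the map $(f_1,f_2) \mapsto f$ is a bijection. For case (2) a short cycle-swap argument is needed to ensure that every hypertree $f_i \in B_{G_i}$ admits a realization containing $(v_i, e_i)$, securing surjectivity.

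Using Theorem \ref{thm 3}, I would order $E(G)$ so that $E_1 \setminus \{e_1\} < e < E_2 \setminus \{e_2\}$ (with $e$ absent in the $V$-identification subcase) and induce orderings on $E_1, E_2$ with $e_1$ largest in $E_1$ and $e_2$ smallest in $E_2$. A case analysis via the sum constraint, combined with Lemmas \ref{lem 20} and \ref{lem 2}, classifies the transfers of valence in $G$: those entirely within $E_i \setminus \{e_i\}$ correspond to transfers in $G_i$; those between $E_i \setminus \{e_i\}$ and the shared $e$ correspond to transfers in $G_i$ with $e_i$ as one endpoint; and cross-part transfers decompose into a pair of transfers, one in each $G_i$, both involving $e_i$. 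It follows that every $e^* \in E_i \setminus \{e_i\}$ is internally inactive in $G$ iff it is in $G_i$, and the shared $e$ (when present) is internally inactive in $G$ iff $e_1$ is internally inactive in $G_1$. Because $e_2$ is smallest in $E_2$ and therefore never internally inactive in $G_2$, summing contributions gives $\overline{\iota}_G(f) = \overline{\iota}_{G_1}(f_1) + \overline{\iota}_{G_2}(f_2)$, whence $I_G(x) = I_{G_1}(x)\,I_{G_2}(x)$. The identity $X_G(y) = X_{G_1}(y)\,X_{G_2}(y)$ follows by the symmetric ordering, with $e_1$ smallest in $E_1$ and $e_2$ largest in $E_2$.

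I expect the main obstacle to be the handling of cross-part transfers between $E_1 \setminus \{e_1\}$ and $E_2 \setminus \{e_2\}$: these are not transfers in any single $G_i$, and Lemma \ref{lem 2} must be used to show that they are automatically subsumed by transfers through the shared hyperedge. The particular choice of ordering of $e_1$ and $e_2$ at the boundary of the two $E_i$ is what makes the boundary contribution $[e_1 \text{ internally inactive in } G_1]$ match the contribution of $e$ in $G$, while $[e_2 \text{ internally inactive in } G_2]$ is forced to vanish, so that the accounting is exact.
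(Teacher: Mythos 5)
The paper does not actually prove this statement; it is quoted from K\'alm\'an's paper \cite{Kalman1} without proof, so I am judging your argument on its own terms. Your overall strategy is the natural one and most of it is carried out correctly: the bijection $B_G\cong B_{G_1}\times B_{G_2}$, the degree relations $f(e)=f_1(e_1)+f_2(e_2)+1$ (vertex identification in $E$) and $f(e)=f_1(e_1)+f_2(e_2)$ (edge identification), and the activity bookkeeping under an ordering with $E_1\setminus\{e_1\}<e<E_2\setminus\{e_2\}$, $e_1$ maximal in $E_1$ and $e_2$ minimal in $E_2$. In particular your classification of transfers is right: the sum constraints $\sum_{E_i}f_i=|V_i|-1$ force every transfer touching the shared hyperedge to decompose as you describe, cross-part transfers are subsumed by transfers through $e$ (this is where Lemma \ref{lem 2} enters), and $e_2$, being smallest, never contributes to inactivity, so $\overline{\iota}_G(f)=\overline{\iota}_{G_1}(f_1)+\overline{\iota}_{G_2}(f_2)$ and the product formula follows from Theorem \ref{thm 3}. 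Case (1) with a vertex of $V$ identified is even easier, since $E_1$ and $E_2$ are both tight at every hypertree of $G$ and no cross transfer exists at all.

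The one step that fails is in case (2): it is \emph{not} true that every spanning tree $\tau$ of $G$ contains the identified edge $(v,e)$, and the premise of your edge count --- that $\tau\cap G_i$ is connected and covers every vertex of $G_i$ --- is false. Take $G_1$ and $G_2$ to be the $4$-cycles $v_ie_ib_ia_i$ (with $v_i,b_i\in V_i$ and $e_i,a_i\in E_i$) glued along $(v,e)$; then $\tau=\{eb_1,\,b_1a_1,\,a_1v,\,va_2,\,a_2b_2\}$ is a spanning tree of $G$ avoiding $(v,e)$, and $\tau\cap G_2$ does not even cover the vertex $e$. As written, your map $f\mapsto(f_1,f_2)$ is therefore not yet known to land in $B_{G_1}\times B_{G_2}$. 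The repair is short. If $(v,e)\notin\tau$, counting edges shows that one of $\tau\cap G_1,\tau\cap G_2$ is a spanning tree of its $G_i$ while the other is a spanning forest with exactly two components, one containing $v$ and one containing $e$ (every vertex of $G_j-\{v,e\}$ reaches $v$ or $e$ inside $G_j$ along its $\tau$-path, since $\{v,e\}$ separates $G$); adding $(v,e)$ to the deficient side yields spanning trees $\tau_1,\tau_2$ with $d_{\tau_1}(e_1)+d_{\tau_2}(e_2)=d_\tau(e)+1$, exactly as in the case $(v,e)\in\tau$, so the relation $f(e)=f_1(e_1)+f_2(e_2)$ and the bijection survive. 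With this patch, together with the realization-swap you already flag for surjectivity, the argument goes through.
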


Now we consider the case that a connected bipartite graph $G=(V\cup E, \varepsilon)$ has a 2-vertex cut in $E$.

\begin{theorem}\label{thm 6}
Let $G=(V\cup E, \varepsilon)$ be a connected bipartite graph with a 2-vertex cut $\{e_{1}, e_{2}\}$ in $E$. Then the degree of the interior polynomial of $G$ is at most $\min\{|E|-1$, $|V|-t+1$\}, where $t$ is the number of connected components of $G-\{e_{1}, e_{2}\}$.
\end{theorem}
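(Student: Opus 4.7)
The strategy is to bound $\bar\iota(f)\leq|V|-t+1$ uniformly over all hypertrees $f$ of $G$, under a carefully chosen total order on $E$. Let $G_{1},\ldots,G_{t}$ denote the connected components of $G-\{e_{1},e_{2}\}$, and set $V_{i}:=V(G_{i})\cap V$ and $E_{i}:=V(G_{i})\cap E$. I first verify that $\mu(E_{i})=|V_{i}|-1$ for every $i$: when $E_{i}\neq\emptyset$ the subgraph $G|_{E_{i}}$ coincides with the connected component $G_{i}$, whose vertex-in-$V$ set is exactly $V_{i}$; when $E_{i}=\emptyset$ necessarily $|V_{i}|=1$, and the convention $\mu(\emptyset)=0$ gives the same formula. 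Applying Theorem \ref{thm 1}(3) to each $E_{i}$ yields $\sum_{e\in E_{i}}f(e)\leq|V_{i}|-1$; summing over $i$ produces
\[\sum_{e\in E\setminus\{e_{1},e_{2}\}}f(e)\leq|V|-t,\]
and combined with $\sum_{e\in E}f(e)=|V|-1$ this forces the key inequality $f(e_{1})+f(e_{2})\geq t-1$.

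Next I fix a total order on $E$ in which $e_{1}$ is the minimum element. By Lemma \ref{lem 3}, every hyperedge $e$ with $f(e)=0$ is internally active (the condition on tight subsets is vacuous), and $e_{1}$ is internally active for every hypertree because there is no $e'<e_{1}$. I claim these two observations already suffice to conclude $\iota(f)\geq|E|-|V|+t-1$, equivalently $\bar\iota(f)\leq|V|-t+1$.

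The verification splits into two cases. If $f(e_{1})=0$, then $f(e_{2})\geq t-1\geq 1$, so exactly one element of $\{e_{1},e_{2}\}$ has $f\neq 0$, while $|\{e\in E\setminus\{e_{1},e_{2}\}:f(e)\neq 0\}|\leq\sum_{e\notin\{e_{1},e_{2}\}}f(e)\leq|V|-t$; hence $|\{e:f(e)=0\}|\geq|E|-|V|+t-1$, and every such $e$ is internally active. If $f(e_{1})\neq 0$, the analogous count gives $|\{e:f(e)=0\}|\geq|E|-|V|+t-2$, but now $e_{1}$ contributes an extra internally active hyperedge not counted among the zeros of $f$, still yielding $\iota(f)\geq|E|-|V|+t-1$. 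Combining this bound with K\'{a}lm\'{a}n's known upper bound $\deg I_{G}(x)\leq|E|-1$ completes the proof.

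The main obstacle I anticipate is squeezing out the extra $+1$ when $f(e_{1})\neq 0$: the naive count $|\{e:f(e)\neq 0\}|\leq|V|-t+2$ is off by one, and the fix rests on the simple but essential observation that the minimum element of the chosen order is always internally active. A secondary subtlety is the degenerate case where some $V_{i}$ is a single vertex with $E_{i}=\emptyset$, but this fits cleanly into the same framework once one invokes $\mu(\emptyset)=0$.
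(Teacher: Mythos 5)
Your proof is correct, and it reaches the same two pillars as the paper's proof by somewhat different routes. The key inequality $f(e_{1})+f(e_{2})\geq t-1$ you derive by applying Theorem \ref{thm 1}(3) to the hyperedge sets $E_{i}$ of the components of $G-\{e_{1},e_{2}\}$, via $\mu(E_{i})=|V_{i}|-1$; the paper instead reads it off a spanning tree $\tau$ realising $f$: every component must be joined to $\{e_{1},e_{2}\}$ in $\tau$ and at least one is joined to both, so $d_{\tau}(e_{1})+d_{\tau}(e_{2})\geq t+1$. More substantively, you invoke order-independence (Theorem \ref{thm 3}) to place $e_{1}$ first in the order, so that $e_{1}$ is automatically internally active and a single uniform count of the zero entries of $f$ (all of which are internally active) yields $\overline{\iota}(f)\leq |V|-t+1$ in both of your cases. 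The paper works with an arbitrary order and splits on whether $f(e_{1})+f(e_{2})\geq t$ or $=t-1$; in the boundary case it must argue separately that the smaller of $e_{1},e_{2}$ is internally active, since a transfer of valence out of it would produce a hypertree $g$ with $g(e_{1})+g(e_{2})=t-2$, contradicting the inequality just established. Your choice of order eliminates that extra step and is arguably cleaner, at the mild cost of leaning on Theorem \ref{thm 3}; the paper's argument has the minor virtue of being order-agnostic. Both case analyses close correctly, and your treatment of the degenerate components with $E_{i}=\emptyset$ is right.
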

\begin{proof}
We consider vertices on $E$ as  hyperedges of hypergraph $\mathcal{H}$. Given an order on $E$, let $f$ be a hypertree and $\tau$ be a spanning tree inducing $f$ in $G$. Since $G-\{e_{1}, e_{2}\}$ has $t$ connected components $G_{1},G_{2},\cdots,G_{t}$, there is at least one edge of $\tau$ joining each $G_{i}$ $(i=1,2,\cdots,t)$ to one of the hyperedges $e_{1}$ and $e_{2}$, and there is at least a connected component $G_{i}$ $(i=1,2,\cdots,t)$ incident with both $e_{1}$ and $e_{2}$ in the spanning tree $\tau$. Hence $d_{\tau}(e_{1})+d_{\tau}(e_{2})\geq t+1$. Since $d_{\tau}(e)=f(e)+1$ for all $e\in E$, $f(e_{1})+f(e_{2})\geq t-1$. There are two cases.

\noindent{\bf Case 1.} $f(e_{1})+f(e_{2})\geq t$.

In this case, recall that $\sum\limits_{e\in E}f(e)=|V|-1$. We have that $\sum\limits_{e\in E\backslash \{e_{1}, e_{2}\}}f(e)\leq |V|-t-1$. Then the cardinality of the set $\{e\in E\backslash \{e_{1}, e_{2}\}|f(e)\neq 0\}$  is at most $|V|-t-1$. Note that if $e$ is internally inactive with respect to $f$, then $f(e)\neq0$. Thus, there are at most $|V|-t-1$ internally inactive hyperedges in $E\backslash \{e_{1}, e_{2}\}$ with respect to $f$ and there are at most $|V|-t+1$ internally inactive hyperedges in $E$ with respect to $f$.

\noindent{\bf Case 2.} $f(e_{1})+f(e_{2})= t-1$.

In this case we have that $\sum\limits_{e\in E\backslash \{e_{1}, e_{2}\}}f(e)= |V|-t$. Then the cardinality of the set $\{e\in E\backslash \{e_{1}, e_{2}\}|f(e)\neq 0\}$  is at most $|V|-t$. Similarly there are at most $|V|-t$ internally inactive hyperedges in $E\backslash \{e_{1}, e_{2}\}$ with respect to $f$. Without loss of generality, we assume that $e_{1}<e_{2}$. Then we claim that $e_{1}$ is internally active with respect to $f$. Otherwise, there is a hypertree $g$ with $g(e_{1})=f(e_{1})-1$ and $g(e)=f(e)+1$ for some hyperedge $e<e_{1}$ and $g(e')=f(e')$ for all $e'\neq e,e_{1}$. We have that $g(e_{1})+g(e_{2})= t-2$, a contradiction. Hence there are still at most $|V|-t+1$ internally inactive hyperedges with respect to $f$.
\end{proof}

A bipartite graph $G=(V\cup E, \varepsilon)$ is said to be \emph{balanced} if $|V|=|E|$.

\begin{corollary}
Let $G=(V\cup E, \varepsilon)$ be a connected balanced bipartite graph with $|V|=|E|=n$. If $G$ has a 2-vertex cut $\{u_1,u_2\}$ in $E$ or $V$ so that $G-\{u_1,u_2\}$ has at least three connected components, then the coefficient of the term $x^{n-1}$ in $I_{G}(x)$ is 0.
\end{corollary}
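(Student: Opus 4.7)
The plan is to reduce immediately to the case where the $2$-vertex cut lies in $E$, and then invoke Theorem~\ref{thm 6} with the parameter $t$ bounded below by $3$. The reduction uses Theorem~\ref{thm 4}: if the $2$-vertex cut $\{u_1,u_2\}$ happens to lie in $V$ instead of $E$, we switch to the abstract dual hypergraph $\overline{\mathcal{H}}=(E,V)$, which has the same interior polynomial. In the dual, the roles of $V$ and $E$ are interchanged, $G$ is still balanced (both classes have size $n$), and $\{u_1,u_2\}$ becomes a $2$-vertex cut contained in the new hyperedge class, separating $G$ into the same $t\geq 3$ components.

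Now I would apply Theorem~\ref{thm 6} directly: since $\{u_1,u_2\}\subset E$ is a $2$-vertex cut with $G-\{u_1,u_2\}$ having $t\geq 3$ components, the degree of $I_{G}(x)$ is at most
\[
\min\{|E|-1,\;|V|-t+1\} \;=\; \min\{n-1,\;n-t+1\} \;\leq\; n-2,
\]
because $t\geq 3$ forces $n-t+1\leq n-2$. Hence no monomial of degree $n-1$ can appear in $I_{G}(x)$, so its coefficient is $0$.

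There is essentially no obstacle here: the corollary is a direct numerical consequence of the degree bound proved in Theorem~\ref{thm 6}, once one uses the duality $I_{\mathcal{H}}(x)=I_{\overline{\mathcal{H}}}(x)$ to symmetrize the roles of $V$ and $E$. The only thing to be careful about is that Theorem~\ref{thm 6} is stated for $2$-vertex cuts in $E$; so the single nontrivial step is the observation that passing to the abstract dual legitimately transfers a $2$-vertex cut in $V$ into a $2$-vertex cut in the hyperedge class without altering the number of components or the value of the interior polynomial.
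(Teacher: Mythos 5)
Your proposal is correct and matches the paper's own (one-line) proof exactly: the paper also cites Theorem~\ref{thm 6} together with the invariance of the interior polynomial under abstract duality, and your explicit computation $\min\{n-1,\,n-t+1\}\leq n-2$ for $t\geq 3$ is precisely the intended numerical step. No issues.
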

\begin{proof}
It follows from Theorem \ref{thm 6} and the fact that taking abstract dual does not affect the interior polynomial.
\end{proof}

Similar to Theorem \ref{thm 6}, we can prove the following more general result, and the proof is left for the readers.

\begin{theorem}
Let $G=(V\cup E, \varepsilon)$ be a connected bipartite graph.  If it has $m$ pairs of vertex-disjoint 2-vertex cuts $\{\{v_{2i-1}, v_{2i}\}|i=1,2,\cdots,m\}$ in $V$ and $n$ pairs of vertex-disjoint 2-vertex cuts $\{\{e_{2i-1}, e_{2i}\}|i=1,2,\cdots,n\}$ in $E$, then the degree of the interior polynomial of $G$ is at most min\{$|E|-t+2m-1$, $|V|-k+2n-1$\}, where $t=\sum\limits_{i=1}^{m}t_{i}$, $k=\sum\limits_{i=1}^{n}k_{i}$, $t_{i}$ is the number of  connected components of $G-\{v_{2i-1}, v_{2i}\}$ for $i=1,2,\cdots,m$, and $k_{i}$ is the number of connected components of $G-\{e_{2i-1}, e_{2i}\}$ for $i=1,2,\cdots,n$.
\end{theorem}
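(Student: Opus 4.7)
My plan is to mimic the proof of Theorem \ref{thm 6}, generalising the argument from one 2-vertex cut to $n$ pairwise vertex-disjoint cuts in $E$; by the abstract dual invariance $I_{\mathcal{H}}(x) = I_{\overline{\mathcal{H}}}(x)$ of Theorem \ref{thm 4}, the analogous bound $|E| - t + 2m - 1$ coming from the $m$ vertex-disjoint cuts in $V$ will follow by applying the symmetric argument to $\overline{\mathcal{H}}$. So I will concentrate on proving $\deg I_G(x) \leq |V| - k + 2n - 1$.

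First I would fix a total order on $E$, pick an arbitrary hypertree $f$ realised by a spanning tree $\tau$, and set $S = \bigcup_{i=1}^{n}\{e_{2i-1}, e_{2i}\}$, which has exactly $2n$ elements by vertex-disjointness of the pairs. Exactly the degree-counting step used in Theorem \ref{thm 6} applied to each cut separately yields $d_\tau(e_{2i-1}) + d_\tau(e_{2i}) \geq k_i + 1$, hence $f(e_{2i-1}) + f(e_{2i}) \geq k_i - 1$ for every $i$. I would call the $i$th pair \emph{tight} if equality holds and \emph{loose} otherwise, and let $j$ denote the number of tight pairs. Combining with $\sum_{e \in E} f(e) = |V| - 1$ then gives
\[ \sum_{e \in E \setminus S} f(e) \leq (|V| - 1) - (k - j) = |V| - k + j - 1, \]
and since an internally inactive hyperedge must have $f(e) \neq 0$ by Lemma \ref{lem 3}, at most $|V| - k + j - 1$ hyperedges in $E \setminus S$ can be internally inactive with respect to $f$.

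The key claim, mirroring Case 2 of Theorem \ref{thm 6}, is that the smaller element of every tight pair is internally active. Granting this, tight pairs contribute at most $j$ internally inactive hyperedges and loose pairs at most $2(n - j)$, so
\[ \overline{\iota}(f) \leq (|V| - k + j - 1) + j + 2(n - j) = |V| - k + 2n - 1, \]
in which $j$ cancels, producing a bound independent of how many pairs happen to be tight; since $f$ was arbitrary, this bounds $\deg I_G(x)$.

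The main obstacle is establishing the key claim. Assuming $e_{2i-1} < e_{2i}$ in a tight pair, if $e_{2i-1}$ were internally inactive then by definition a transfer of valence from $e_{2i-1}$ to some $e' < e_{2i-1}$ would produce a hypertree $g$. The delicate point is that $e' < e_{2i-1} < e_{2i}$ forces $e' \neq e_{2i}$ (even if $e'$ happens to lie inside some \emph{other} cut pair), so $g(e_{2i-1}) + g(e_{2i}) = f(e_{2i-1}) + f(e_{2i}) - 1 = k_i - 2$, contradicting the inequality $g(e_{2i-1}) + g(e_{2i}) \geq k_i - 1$ applied to the hypertree $g$. Precisely this is where vertex-disjointness of the pairs is used; once it is in place the bounds for the different pairs decouple and the argument of Theorem \ref{thm 6} carries through pair by pair.
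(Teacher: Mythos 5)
Your proposal is correct and follows exactly the route the paper intends: the paper states only that the result is proved ``similar to Theorem \ref{thm 6}'' and leaves the details to the reader, and your argument is precisely that generalization --- the degree count $d_\tau(e_{2i-1})+d_\tau(e_{2i})\ge k_i+1$ applied to each pair, the case split into tight and loose pairs (with the smaller element of a tight pair shown internally active by the same contradiction as in Case~2), and abstract duality (Theorem \ref{thm 4}) for the bound coming from the cuts in $V$. The only quibble is one of attribution: vertex-disjointness is really needed so that $\sum_{e\in S}f(e)\ge k-j$ counts each $f$-value once (the decoupling in your displayed inequality), rather than in the key claim, where $e'\neq e_{2i}$ already follows from the ordering alone.
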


\subsection{Monic property}
\noindent

In this subsection we consider the monic property of interior polynomials for balanced bipartite graphs.

\begin{lemma}\label{thm 7}
Let $G=(V\cup E, \varepsilon)$ be a connected balanced bipartite graph with $|V|=|E|=n$, and $E=\{e_1,e_2,\cdots,e_n\}$. Then the coefficient of the term $x^{n-1}$ of the interior polynomial $I_{G}(x)$ of $G$ is non-zero, if and only if each $f_i$ ($i=1,2,\cdots,n$) is a hypertree of $G$, where $f_i(e_i)=0$ and $f_i(e_j)=1$ for all $j\neq i$. Moreover, the coefficient of the term $x^{n-1}$ of the interior polynomial $I_{G}(x)$ of $G$ is in fact at most 1.
\end{lemma}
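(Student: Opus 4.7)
The plan is to count hypertrees with internal inactivity $n-1$, which by definition equals the coefficient of $x^{n-1}$ in $I_G(x)$. First I would observe that the smallest hyperedge in any chosen total order on $E$ is trivially internally active (there is no smaller hyperedge to receive a valence transfer), so $\bar\iota(f)\le n-1$ for every hypertree $f$. This makes $x^{n-1}$ the top candidate for the leading term.

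Next, fix an order and write it as $e_{\sigma(1)}<e_{\sigma(2)}<\cdots<e_{\sigma(n)}$. I would show that a hypertree $f$ achieves $\bar\iota(f)=n-1$ if and only if $f=f_{\sigma(1)}$. Indeed, if every $e_{\sigma(k)}$ with $k\ge 2$ is internally inactive, then by the necessary condition in Lemma \ref{lem 3} we have $f(e_{\sigma(k)})\ne 0$, hence $f(e_{\sigma(k)})\ge 1$, for all $k\ge 2$. Combined with Theorem \ref{thm 1}(2), which gives $\sum_{e\in E}f(e)=|V|-1=n-1$, this forces $f(e_{\sigma(1)})=0$ and $f(e_{\sigma(k)})=1$ for every $k\ge 2$; that is, $f$ must be exactly $f_{\sigma(1)}$. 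Consequently, for any chosen order the coefficient of $x^{n-1}$ is either $0$ or $1$, which immediately gives the ``at most $1$'' part of the moreover statement.

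For the characterization, I would invoke Theorem \ref{thm 3}: the polynomial $I_G(x)$ is independent of the order on $E$, so by permuting the order we can arrange for any desired $e_i$ to be the smallest. Thus the coefficient of $x^{n-1}$ equals $1$ precisely when, for every $i$, the function $f_i$ is simultaneously a hypertree and has internal inactivity $n-1$ under the order placing $e_i$ first. The $(\Rightarrow)$ direction of the lemma then follows, because nonvanishing of the coefficient with $e_i$ smallest forces $f_i$ to be a hypertree.

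For the $(\Leftarrow)$ direction, the key observation (and essentially the only nontrivial computation) is: if each $f_j$ is a hypertree, then starting from $f_i$ and transferring one unit of valence from $e_j$ to $e_i$ produces precisely the function $f_j$, because this operation turns the only zero entry $f_i(e_i)$ into $1$ and the entry $f_i(e_j)=1$ into $0$. Hence when $e_i$ is smallest, every $e_j$ with $j\ne i$ admits a legal transfer to the smaller hyperedge $e_i$ and is therefore internally inactive with respect to $f_i$, giving $\bar\iota(f_i)=n-1$ as required. I expect the main conceptual step to be recognizing that $\sum_{e\in E}f(e)=n-1$ together with the nonzero-valence necessary condition of Lemma \ref{lem 3} rigidly pins down the candidate hypertree, while the transfer-of-valence trick between $f_i$ and $f_j$ is the short computation that closes the equivalence.
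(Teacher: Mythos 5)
Your proof is correct and follows the same core strategy as the paper: the smallest hyperedge is vacuously internally active, so internal inactivity $n-1$ forces all other entries to be nonzero, and together with $\sum_{e\in E}f(e)=n-1$ this pins down the unique candidate hypertree, giving the ``at most $1$'' bound; sufficiency then comes from the observation that a transfer from $e_j$ to the smallest hyperedge turns $f_i$ into $f_j$. The one place you genuinely diverge is the necessity of \emph{every} $f_i$ being a hypertree: the paper works with a single fixed order $e_1<\cdots<e_n$ and asserts this in one line (which, read literally, needs the transitivity of valence transfers from Lemma \ref{lem 2} to upgrade ``$e_j$ can transfer to some $e'<e_j$'' to ``$e_j$ can transfer to $e_1$''), whereas you invoke the order-independence of $I_G$ (Theorem \ref{thm 3}) and re-run the rigidity argument with each $e_i$ placed first. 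Your route is arguably cleaner and fills in a step the paper leaves implicit, at the cost of leaning on the nontrivial invariance theorem. One small phrasing issue: you claim $\bar\iota(f)=n-1$ ``if and only if'' $f=f_{\sigma(1)}$, but you only establish (and only need) the forward implication --- the converse holds only when all the $f_j$ are hypertrees, which is exactly what is being characterized.
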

\begin{proof}
Without loss of generality, we assume that $e_{1}<e_{2}<\cdots<e_{n}$. The sufficiency follows from the fact that $f_{1}$ is a hypertree with internal inactivity $n-1$.
For necessity, if $f$ is a hypertree with internal inactivity  $n-1$, then $f$ has exactly one zero entry and $n-1$ one entries since $\sum\limits_{i=1}\limits^n f(e_i)=n-1$ and to ensue that internal inactivity of $f$ is $n-1$, each $f_i$ must be a hypertree of $G$.

Note that $f_2,\cdots, f_n$ and other hypertrees of $G$ with at least two zero entries have internal inactivity at most $n-2$, hence the coefficient of the term $x^{n-1}$ of the interior polynomial $I_{G}(x)$ for $G$ is at most  1.
\end{proof}

It is well-known that a graph is 2-connected if and only if it has an ear decomposition.

\begin{theorem}\label{dds}
Let $G=(V\cup E, \varepsilon)$ be a 2-connected balanced bipartite graph with $|V|=|E|=n$. If it has an ear decomposition such that each ear is from some element of $V$ to some element of $E$, then the coefficient of the term $x^{n-1}$ of the interior polynomial $I_{G}(x)$ for $G$ is 1.
\end{theorem}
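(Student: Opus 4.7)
By Lemma \ref{thm 7}, it suffices to show that each $f_i$ is a hypertree of $G$; equivalently, one must construct, for every $e_i \in E$, a spanning tree $\tau_i$ of $G$ in which $e_i$ is a leaf and every other $E$-vertex has degree exactly $2$. The plan is to induct on the number of ears $r$ in the given decomposition $G = C_0 \cup P_1 \cup \cdots \cup P_r$. When $r = 0$, $G = C_0$ is an even cycle of length $2n$, and deleting either of the two edges of $C_0$ incident to $e_i$ yields the desired Hamilton path.

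For the inductive step, let $P = P_r$ be the last ear, with endpoints $v^* \in V$ and $e^* \in E$, and with $2m$ internal vertices $u_1, \ldots, u_{2m}$ listed consecutively along $P$ so that $u_{2j-1} \in E$ and $u_{2j} \in V$. The graph $G' = C_0 \cup P_1 \cup \cdots \cup P_{r-1}$ is $2$-connected, balanced, and inherits an ear decomposition of the required type, so the induction hypothesis applies. If $e_i$ is already a vertex of $G'$, take a spanning tree $\tau_i'$ of $G'$ realizing $f_{e_i}^{G'}$ and adjoin all edges of $P$ except the last, namely $(u_{2m}, e^*)$; routine bookkeeping shows that the resulting graph is a spanning tree of $G$ whose $E$-degrees realize $f_i$. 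If instead $e_i = u_{2k-1}$ for some $1 \le k \le m$, then by induction applied to the $E$-vertex $e^*$ of $G'$ there exists a spanning tree $\tau_{e^*}'$ of $G'$ realizing $f_{e^*}^{G'}$; adjoin the two subpaths of $P$ obtained by deleting the edge $(u_{2k-1}, u_{2k})$, namely the edges $(v^*, u_1), \ldots, (u_{2k-2}, u_{2k-1})$ on one side and $(u_{2k}, u_{2k+1}), \ldots, (u_{2m}, e^*)$ on the other. In the resulting spanning tree $\tau_i$, the vertex $e_i$ is a leaf, every other internal $E$-vertex of $P$ has degree $2$, and $e^*$ has degree $1 + 1 = 2$ (its degree $1$ in $\tau_{e^*}'$ plus the new edge $(u_{2m}, e^*)$), while every other $E$-vertex of $G'$ retains its degree $2$ from $\tau_{e^*}'$.

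The technical heart is the split construction in the second case: one must invoke the induction at $e^*$ rather than at $e_i$ (which does not belong to $G'$), and arrange that the lone edge $(u_{2m}, e^*)$ contributed by the new ear to $e^*$ lifts its degree from $1$ to precisely $2$. I expect the main obstacle to be the correct pairing between the index $i$ and the hypertree $f_{e^*}^{G'}$ that must be invoked on $G'$, together with verifying simultaneously that the resulting $\tau_i$ is a tree (connectedness plus the correct number $2n-1$ of edges), that the new internal $E$-vertices acquire degree $2$, and that $e^*$ in particular increases from degree $1$ to degree $2$.
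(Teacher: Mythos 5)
Your proposal is correct and follows essentially the same route as the paper: induction on the number of ears with the even cycle as base case, appending the ear minus its terminal edge at $e^*$ when $e_i$ lies in $G'$, and, when $e_i$ is a new hyperedge on the ear, splitting the ear at an edge incident to $e_i$ and grafting the two pieces onto the spanning tree of $G'$ that realises $f^{G'}_{e^*}$ so that $e^*$'s degree rises from $1$ to $2$. The degree and edge counts you flag as remaining bookkeeping do check out, so there is no gap.
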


\begin{proof}
Suppose that  $E=\{e_1,e_2,\cdots,e_n\}$. By Lemma \ref{thm 7}, we only need to show that each $f_{i}$ ($i=1,2,\cdots,n$) with $f_{i}(e_{i})=0$ and $f_{i}(e_j)=1$ for all $j\neq i$ is a hypertree of $G$.
We shall prove the theorem by induction on the number of ears in the ear decomposition of $G$.

If the number of ears is zero, then $G=C_{2n}$. Suppose $C_{2n}=v_1e_1v_2e_2\cdots v_ne_n$. Let $T_{i}$ be the spanning tree of $C_{2n}$ obtained from $C_{2n}$ by removing an edge incident with $e_{i}$ for every $i=1,2,\cdots,n$. Then the hypertree $f_{i}$ induced by $T_i$ satisfies $f_{i}(e_{i})=0$ and $f_{i}(e)=1$ for all $e\neq e_{i}$. Assume that the theorem holds for any 2-connected bipartite graph $G'=(V'\cup E', \varepsilon')$ whose ear decomposition has $k$ ears. Suppose that $G$ is obtained from $G'$ by adding the $k+1$-th ear $P$ with length $2l+1$ from some element of $V'$ to some element $e_{j}$ of $E'$. We order hyperedges of $E'$ firstly then $l$ hyperedges on $P$. By induction hypothesis, we suppose that $\tau'_{i}$ $(i=1,2,\cdots,|E|')$ is the spanning tree of $G'$ inducing $f'_{i}$, where $f'_{i}$ is the hypertree with $f'_{i}(e_{i})=0$ and $f'_{i}(e)=1$ for $e\in E'\setminus e_{i}$.

Now we show that $f_{i}$ $(i=1,2,\cdots,|E|'+l)$ with $f_{i}(e_{i})=0$ and $f_{i}(e)=1$ for $e\neq e_{i}$ is a hypertree of $G$. Let $\epsilon$ be the edge adjacent to $e_{j}$ in $P$. For $i=1,2,\cdots,|E|'$,  $\tau_i=\tau'_{i}+P-\epsilon$ is a spanning tree of $G$ and it exactly induces the hypertree $f_{i}$.  For $i=|E|'+1,|E|'+2,\cdots,|E|'+l$, $\tau_i=\tau'_{j}+P-\epsilon'_i$ is a spanning tree of $G$ and it exactly induces the hypertree $f_{i}$, where $\epsilon'_i$ is an edge adjacent to $e_{i}$ on $P$.

An example illustrating the proof is given in Figure 1.
\end{proof}
\begin{center}
\includegraphics[width=5.0in]{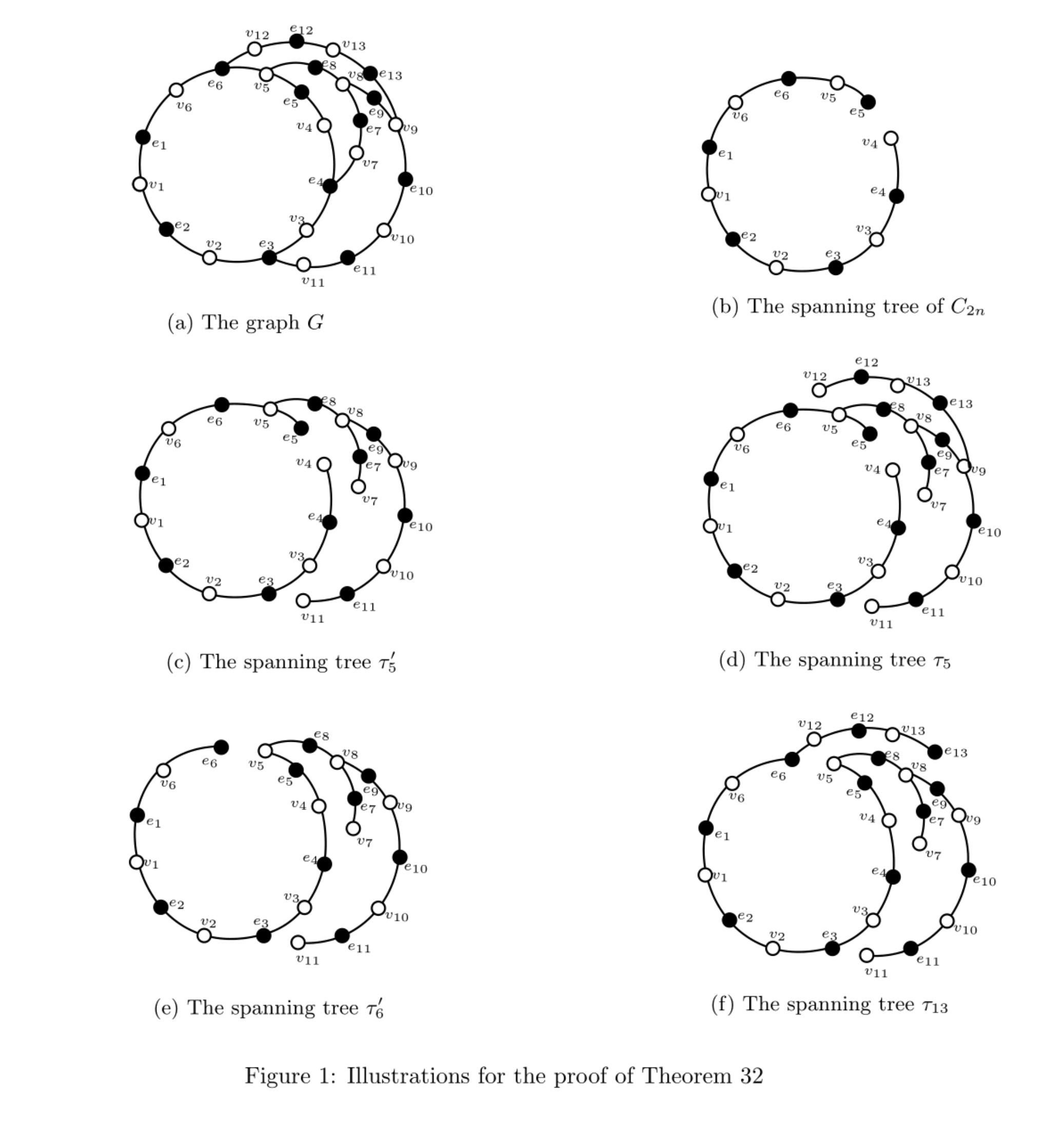}
\end{center}
\subsection{Linear representation}
\noindent

In this subsection, using deletion-contraction relations repeatedly, we show that the interior polynomial of any connected bipartite graph can be represented as a linear combination of the interior polynomials of connected balanced bipartite graphs.

\begin{theorem}[\cite{Kalman1}]\label{thm 8}
If $u\in V\cup E$ is a vertex of valence 1 in the bipartite graph $G$, then $I_{G}(x)=I_{G\setminus u}(x)$ and $X_{G}(y)=X_{G\setminus u}(y)$.
\end{theorem}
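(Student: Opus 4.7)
The plan is to split on the color class of the vertex of valence 1 and, in each case, construct an explicit bijection between $B_{\mathcal{H}}$ and $B_{\mathcal{H}'}$, where $\mathcal{H}'$ is the hypergraph corresponding to $G\setminus u$. By the freedom guaranteed by Theorem \ref{thm 3}, I will choose an ordering that makes the ``extra'' hyperedge (the one involving $u$) the largest element of $E$; then I will verify that transfers of valence match under the bijection, so that internal and external activities of every other hyperedge are preserved and the extra hyperedge contributes trivially to both polynomials.

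If $u\in E$, then $u=\{v\}$ is a singleton hyperedge, so $d_{G}(u)=1$ forces $f(u)=0$ for every hypertree $f$, and the restriction $f\mapsto f|_{E\setminus\{u\}}$ is a bijection $B_{\mathcal{H}}\to B_{\mathcal{H}'}$. Placing $u$ last makes it automatically both internally and externally active: no transfer can decrease a zero entry, and no transfer can raise $f(u)$ above the upper bound $d_{G}(u)-1=0$ from Theorem \ref{thm 1}(1). Hence $\overline{\iota}(f)=\overline{\iota}(f|_{E\setminus\{u\}})$ and $\overline{\epsilon}(f)=\overline{\epsilon}(f|_{E\setminus\{u\}})$ for every hypertree, yielding both polynomial identities at once.

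If $u\in V$, let $e$ be the unique hyperedge containing $u$; if $|e|=1$ we reduce to the previous case applied to $e$, so assume $|e|\geq 2$. The natural candidate bijection is $f\leftrightarrow f'$, with $f'(e)=f(e)-1$ and $f'(e'')=f(e'')$ for $e''\neq e$; on the spanning-tree side it amounts to deleting the bridge $\{u,e\}$ from a tree $\tau$ realizing $f$. The crucial computation is that $\mu$ transforms uniformly:
\begin{equation*}
\mu_{\mathcal{H}'}(A)=\mu_{\mathcal{H}}(A)-\chi_{\{e\in A\}},
\end{equation*}
since $u$ is a leaf of $G|_{A}$ exactly when $e\in A$, so deleting $u$ drops $|\bigcup A|$ by one without changing the number of components. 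Combined with $\sum_{e''\in A}f(e'')=\sum_{e''\in A}f'(e'')+\chi_{\{e\in A\}}$, this shows the bijection preserves the hypertree inequalities of Theorem \ref{thm 1}, hence preserves tightness of every subset and therefore preserves every transfer of valence (including those that touch $e$). Consequently, internal and external activities agree term-by-term.

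The main obstacle will be the book-keeping around the hyperedge $e$ in the case $u\in V$: one must check that $f(e)\geq 1$ always (so $f'$ stays nonnegative) and that transfers touching $e$ have legal counterparts on the other side. Both reduce to the inequality $\mu_{\mathcal{H}}(E\setminus\{e\})\leq |V|-2$, which follows from the fact that $u\notin\bigcup(E\setminus\{e\})$. Once these checks are in place, the activity-preservation argument handles internal and external inactivity uniformly, yielding both claimed identities in a single pass.
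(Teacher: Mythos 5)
The paper does not prove this statement at all --- it is quoted from K\'{a}lm\'{a}n \cite{Kalman1} as a known deletion rule --- so there is no in-paper argument to compare against; judged on its own, your proof is correct and essentially reconstructs the standard argument. The case $u\in E$ is immediate as you say: $f(u)=0$ for every hypertree, $u$ never participates in any transfer of valence, and the pendant-edge correspondence on spanning trees gives the bijection. In the case $u\in V$ your bijection $f\mapsto f-\chi_{e}$ is the right one, and you correctly identify the two points where care is needed: that $f(e)\geq 1$ for every hypertree (every spanning tree contains the bridge $\{u,e\}$ and, since $|e|\geq 2$ and $G$ is connected, must also join $e$ to some other vertex of $V$), and that a transfer out of $e$ when $f(e)=1$ is blocked on the $G$ side because $E\setminus\{e\}$ is then tight (as $\mu(E\setminus\{e\})\leq|V|-2$ since $u\notin\bigcup(E\setminus\{e\})$), matching the vanishing of $f'(e)$ on the $G\setminus u$ side. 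Your route to transfer-preservation, via the uniform shift $\mu_{\mathcal{H}'}(A)=\mu_{\mathcal{H}}(A)-\chi_{\{e\in A\}}$ and the tightness criteria of Lemmas \ref{lem 20} and \ref{lem 3}, is slightly more elaborate than necessary: the same conclusion falls out directly from the observation that $g$ is a hypertree of $\mathcal{H}$ if and only if $g-\chi_{e}$ is a (nonnegative) hypertree of $\mathcal{H}'$, which is immediate from the pendant-edge bijection on spanning trees and at once converts any transferred function on one side into the corresponding one on the other. But the slack-preservation computation is valid and arguably makes the activity bookkeeping more transparent, so this is a matter of taste rather than a gap.
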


Let $G=(V\cup E, \varepsilon)$ be a connected bipartite graph and $e\in E$. We denote by $G\setminus e$ the bipartite graph obtained from $G$ by removing $e$ and all edges incident with $e$. Moveover, we denote by $G/e$ the bipartite graph obtained from $G\setminus e$ by identifying all vertices adjacent to $e$ and replacing all multi-edges by single ones.

\begin{theorem}[\cite{Kalman1}]\label{thm 9}
If $u\in V\cup E$ is a vertex of valence 2 in the bipartite graph $G$ and $G\setminus u$ is connected, then $I_{G}(x)=I_{G\setminus u}(x)+xI_{G/u}(x)$. If $e\in E$ and $d_{G}(e)=2$, then $X_{G}(y)=yX_{G\setminus e}(y)+X_{G/e}(y)$.
\end{theorem}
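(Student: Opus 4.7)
Proof plan.

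The plan is to leverage Theorem \ref{thm 1}(1): since $d_G(u) = 2$, every hypertree $f$ of $G$ satisfies $f(u) \in \{0,1\}$. I will group the hypertrees of $G$ according to $f(u)$, match each group bijectively with hypertrees of the relevant reduced hypergraph, and track how internal (resp.\ external) (in)activity behaves across the bijection. By Theorem \ref{thm 4} the interior polynomial is unchanged under abstract duality, so for the interior assertion I may assume $u \in E$ (the case $u \in V$ then follows from this by duality). By Theorem \ref{thm 3} I may order $E$ so that $u$ (for the interior formula), respectively $e$ (for the exterior formula), is the \emph{largest} hyperedge.

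For the interior formula, let $v_1, v_2$ denote the two neighbours of $u$ in $V$. When $f(u) = 0$, any spanning tree $\tau$ realising $f$ has $u$ as a leaf, and deleting $u$ yields a spanning tree of $G \setminus u$ (connected by hypothesis) realising $f|_{E\setminus\{u\}}$; the converse is obtained by reattaching $u$ via a single edge. When $f(u) = 1$, any $\tau$ realising $f$ contains both $(u,v_1)$ and $(u,v_2)$, and contracting $u$ yields a spanning tree of $G/u$ realising $f|_{E\setminus\{u\}}$; the converse lifts a spanning tree of $G/u$ by splitting the contracted vertex and adding $u$ with both edges. These establish two bijections between the two classes of hypertrees of $G$ and the hypertrees of $G\setminus u$ and $G/u$, respectively.

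Next I track inactivity. With $u$ ordered largest, $u$ is trivially internally active when $f(u) = 0$. When $f(u) = 1$, apply Lemma \ref{lem 1} to $E' = E \setminus \{u\}$: connectivity of $G\setminus u$ gives $\mu(E') = |V|-1 > |V|-2 = \sum_{e\neq u} f(e)$, so $E'$ is not tight at $f$, and Lemma \ref{lem 1} produces a valence transfer from $u$ to some hyperedge smaller than $u$, showing that $u$ is internally inactive. For any other $e \in E \setminus \{u\}$ and any candidate $e' < e$, the transfer from $e$ to $e'$ leaves the $u$-entry unchanged (since $e' < e < u$, so $e' \neq u$), hence by the bijections it produces a hypertree of $G$ iff its restriction produces a hypertree of the corresponding reduced hypergraph. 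So the internal inactivity of $e$ with respect to $f$ matches that of $e$ with respect to $f|_{E\setminus\{u\}}$ in $G\setminus u$ (when $f(u) = 0$) or in $G/u$ (when $f(u) = 1$). Summing,
\[
I_G(x) = \sum_{f:\, f(u)=0} x^{\overline{\iota}(f)} + \sum_{f:\, f(u)=1} x^{\overline{\iota}(f)} = I_{G\setminus u}(x) + x\, I_{G/u}(x),
\]
where the extra factor $x$ records the inactivity of $u$ in the $f(u)=1$ class.

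The exterior formula follows the same template, now ordering $e$ as largest and replacing internal by external activity. When $f(e) = 0$, the singleton $\{e\}$ is not tight since $\mu(\{e\}) = 2 - 1 = 1 > 0 = f(e)$, so by Lemma \ref{lem 1} some $e'' < e$ can transfer valence to $e$, making $e$ externally inactive and contributing the factor $y$; when $f(e) = 1$ the value $f(e)$ cannot be increased, so $e$ is externally active. The bijections with hypertrees of $G\setminus e$ and $G/e$ and the transport of external inactivity for the remaining hyperedges are entirely parallel to the interior case. The main delicacy throughout is precisely this transport step: it relies crucially on ordering $u$ (resp.\ $e$) as the largest hyperedge, so that every candidate valence transfer among the other hyperedges leaves the $u$-entry (resp.\ $e$-entry) untouched and can therefore be tested through the bijection inside the smaller hypergraph.
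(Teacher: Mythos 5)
The paper does not prove this statement; it is imported verbatim from \cite{Kalman1} as a known result, so there is no in-paper argument to compare against. Your proof is correct and is essentially the standard one (and, to the best of my knowledge, the one in \cite{Kalman1}): since $d_G(u)=2$, Theorem \ref{thm 1}(1) forces $f(u)\in\{0,1\}$; the $f(u)=0$ hypertrees restrict bijectively to hypertrees of $G\setminus u$ (the realising tree has $u$ as a leaf) and the $f(u)=1$ hypertrees to hypertrees of $G/u$ (contract the path $v_1uv_2$, noting a tree cannot contain both $v_1$ and $v_2$ adjacent to a common $e'$, so no multi-edge issue arises); and placing $u$ last in the order both forces $u$'s own activity status (active iff $f(u)=0$ for the interior count, via non-tightness of $E\setminus\{u\}$ when $f(u)=1$; the mirror statement with $\{e\}$ for the exterior count) and guarantees that every candidate transfer among the remaining hyperedges fixes the $u$-entry, so activities transport through the bijection. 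The only caveat is that your bijection for the $f(e)=0$ class in the exterior formula silently uses that $G\setminus e$ is connected, a hypothesis the second sentence of the statement omits (and without which $X_{G\setminus e}$ is not even defined); this is a defect of the transcribed statement rather than of your argument, but it is worth stating the hypothesis explicitly.
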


For a connected bipartite graph $G=(V\cup E, \varepsilon)$,
we use $G_{t}$ to denote the bipartite graph obtained from $G$ by adding $t$ vertices $V'=\{v_{1},v_{2},\cdots,v_{t}\}$ to $V$ and joining them to the same pair of vertices $e_{1},e_{2}\in E$ for some positive integer $t$ and we use $G'$ to denote the bipartite graph obtained from $G$ by identifying $e_{1}$ and $e_{2}$, and replacing all multi-edges by single ones. See Figure 2. We have the following Lemma \ref{thm 10}.

\begin{center}
\includegraphics[width=4.8in]{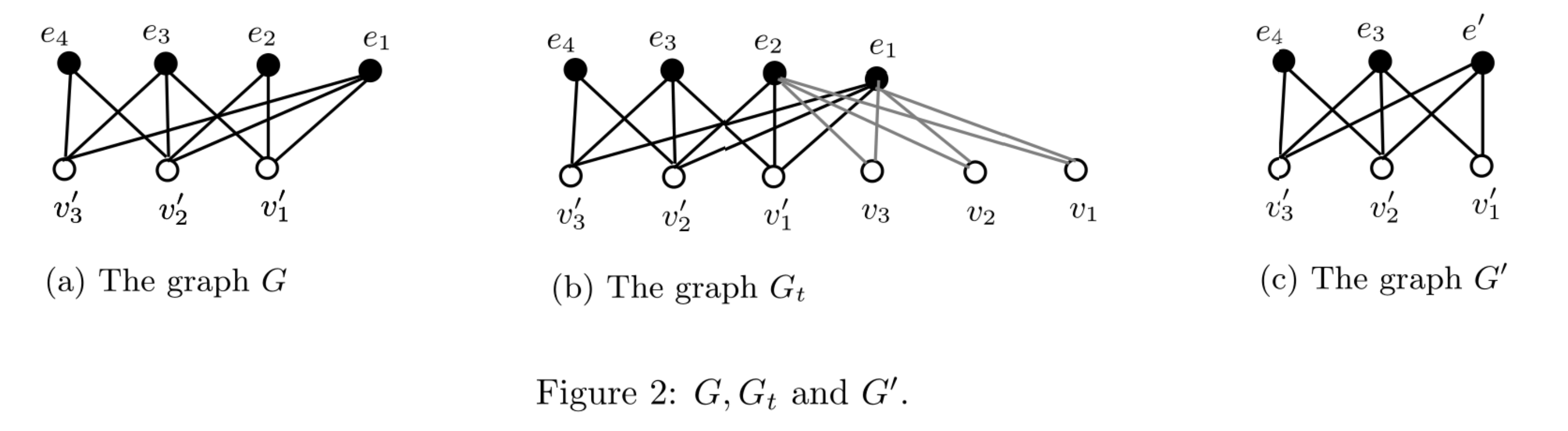}
\end{center}
\begin{lemma}\label{thm 10}
$I_{G}(x)=I_{G_{t}}(x)-txI_{G'}(x)$.
\end{lemma}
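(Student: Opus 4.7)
The plan is to reduce the identity to repeated application of the degree-2 deletion–contraction formula (Theorem \ref{thm 9}) for vertices in $V$, coupled with the fact that valence-$1$ vertices do not affect the interior polynomial (Theorem \ref{thm 8}). I would induct on $t\ge 0$, with the base case $t=0$ being trivial since $G_{0}=G$.

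For the inductive step, pick one of the added vertices $v_i\in V'\subset V(G_{t})$. By construction $v_i$ has valence $2$ in $G_{t}$, and $G_{t}\setminus v_i$ is connected (removing $v_i$ leaves the other vertices of $V'$ still attached to both $e_1$ and $e_2$, with the underlying $G$ unchanged). So Theorem \ref{thm 9} applies and yields
\begin{equation*}
I_{G_{t}}(x)=I_{G_{t}\setminus v_i}(x)+x\,I_{G_{t}/v_i}(x).
\end{equation*}
First I would observe that $G_{t}\setminus v_i$ is exactly $G_{t-1}$ (up to relabelling one of the added vertices). Next I would analyse $G_{t}/v_i$: identifying $v_i$ with its neighbours $e_1,e_2$ merges $e_1$ and $e_2$ into a single hyperedge, after which each of the remaining $t-1$ added vertices $v_j$ ($j\ne i$) now joins this merged vertex via two parallel edges, which collapse to a single edge by the convention that multi-edges are replaced by single ones. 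Thus each such $v_j$ becomes a valence-$1$ vertex in $G_{t}/v_i$, while the rest of the graph is precisely $G'$. Applying Theorem \ref{thm 8} to strip off these $t-1$ valence-$1$ vertices one at a time gives $I_{G_{t}/v_i}(x)=I_{G'}(x)$.

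Combining the two identifications produces the recurrence
\begin{equation*}
I_{G_{t}}(x)=I_{G_{t-1}}(x)+x\,I_{G'}(x),
\end{equation*}
and iterating $t$ times (using $G_{0}=G$) gives $I_{G_{t}}(x)=I_{G}(x)+tx\,I_{G'}(x)$, which rearranges to the claimed $I_{G}(x)=I_{G_{t}}(x)-tx\,I_{G'}(x)$.

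The only mild obstacle is the careful bookkeeping when contracting $v_i$: one must check that after identifying $e_1,e_2$ and simplifying parallel edges, the remaining added vertices genuinely become valence-$1$ and the underlying structure on $V\cup(E\setminus\{e_1,e_2\})\cup\{e_{12}\}$ matches $G'$ exactly (including the case where some $v\in V$ was adjacent to both $e_1$ and $e_2$ in $G$, where parallel edges again collapse). Once this identification is made, the inductive step is immediate and the rest is formal.
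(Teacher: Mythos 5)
Your argument is correct and is essentially the paper's own proof: both apply the valence-$2$ deletion--contraction formula (Theorem \ref{thm 9}) to the added vertices one at a time, identify the deletion with $G_{t-1}$ (ultimately $G$), and use Theorem \ref{thm 8} to strip the remaining added vertices, which become valence-$1$ after the contraction, so that every contraction term contributes $xI_{G'}(x)$. The paper simply unrolls the recursion explicitly rather than phrasing it as an induction on $t$.
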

\begin{proof}
 Note that $G=G_{t}\setminus V'$  and  $I_{G_{t}/v_{1}}(x)=I_{G_{t}\setminus v_{1}/v_{2}}(x)=I_{G_{t}\setminus v_{1}\setminus v_{2}/v_{3}}(x)=\cdots=I_{G'}(x)$ by Theorem \ref{thm 8}. By Theorem \ref{thm 9}, we have that
$I_{G_{t}}(x)=I_{G_{t}\setminus v_{1}}(x)+xI_{G_{t}/v_{1}}(x)=I_{G_{t}\setminus v_{1}\setminus v_{2}}(x)+xI_{G_{t}\setminus v_{1}/v_{2}}(x)+xI_{G'}(x)=I_{G_{t}\setminus v_{1}\setminus v_{2}}(x)+2xI_{G'}(x)=\cdots=I_{G}(x)+txI_{G'}(x)$.  Then $I_{G}(x)=I_{G_{t}}(x)-txI_{G'}(x)$.
\end{proof}

\begin{theorem}[\cite{Kalman1}]\label{lem 10}
For any connected bipartite graph $G=(V\cup E, \varepsilon)$, the coefficient of the linear term in  the interior polynomial of $G$ is $n(G)=|\varepsilon|-(|V|+|E|)+1$.  
\end{theorem}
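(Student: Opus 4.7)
The plan is to exploit the identification, established by K\'alm\'an and Postnikov in \cite{Kalman3}, of $I_G(x)$ with the $h^*$-polynomial of the root polytope $Q_G\subset\mathbb{R}^{V\cup E}$ of the bipartite graph $G$. Recall that $Q_G$ is the convex hull of the $|\varepsilon|$ vectors $e_v-e_e$, one per edge of $G$, so it has dimension $d=|V|+|E|-2$ and exactly $|\varepsilon|$ vertices. I would then extract the coefficient of $x^1$ via the standard Ehrhart formula $h_1^*=L_{Q_G}(1)-(d+1)$, which follows by evaluating
\begin{equation*}
L_{Q_G}(t)=\sum_{i=0}^{d}h_i^*\binom{t+d-i}{d}
\end{equation*}
at $t=1$, noting that $\binom{d+1-i}{d}$ equals $d+1$, $1$, $0$ for $i=0$, $i=1$, $i\geq 2$ respectively, and using $h_0^*=1$.

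The first substantive step is to show that $L_{Q_G}(1)=|\varepsilon|$, i.e.\ that the only lattice points of $Q_G$ are its vertices. I would use the fact that $Q_G$ admits a unimodular triangulation indexed by the spanning trees of $G$ (a consequence of the bipartite structure), so every lattice point lies in some unit simplex of the triangulation and is therefore one of its vertices, hence a vertex of $Q_G$. Combining this with the formula above gives
\begin{equation*}
h_1^*=|\varepsilon|-(d+1)=|\varepsilon|-(|V|+|E|-1)=n(G),
\end{equation*}
which is precisely the coefficient of the linear term of $I_G(x)$.

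The main obstacle I anticipate is that the Ehrhart/polytope machinery is only alluded to in the introduction of this paper, whereas the working definitions are purely combinatorial (hypertrees, transfers of valence, activities). Translating the polytope argument into a self-contained combinatorial proof would require a direct count of hypertrees $f$ with $\overline{\iota}(f)=1$. By the fact that the constant term is $1$ there is a unique $\overline{\iota}=0$ hypertree $f_0$, and one is tempted to set up a bijection between $\{f:\overline{\iota}(f)=1\}$ and the $n(G)$ fundamental cycles of a spanning tree of $G$ realising $f_0$. However, small examples such as $K_{3,3}$ show that some $\overline{\iota}=1$ hypertrees are not one transfer of valence away from $f_0$, so the bijection would have to absorb multi-step chains of transfers and carefully track how the set of internally inactive hyperedges propagates along such chains; this is the hardest step in any combinatorial alternative.
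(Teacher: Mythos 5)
This theorem is not proved in the paper at all: it is quoted from \cite{Kalman1}, where K\'alm\'an's argument is combinatorial (a count of hypertrees with internal inactivity $1$ organized around a greedy hypertree --- the very template that Section~4.2 of this paper adapts to prove the exterior analogue, Theorem~\ref{thm 11}). Your Ehrhart-theoretic derivation is therefore a genuinely different route, and it is correct as far as I can check: granting the K\'alm\'an--Postnikov identification of $I_G(x)$ with the $h^*$-polynomial of the root polytope $Q_G$ (which needs the standard extra remark that for a unimodular triangulation the $h$-vector of the triangulation equals the $h^*$-vector of the polytope), one has $d=|V|+|E|-2$ for connected $G$, $h_0^*=1$, and $h_1^*=L_{Q_G}(1)-(d+1)$; and the lattice points of $Q_G$ are exactly its $|\varepsilon|$ vertices --- your triangulation argument works, but this also follows directly, since a lattice point of $Q_G$ has exactly one $V$-coordinate equal to $1$ and one $E$-coordinate equal to $-1$, and any convex representation of $e_v-e_e$ must put all its weight on edges incident to both $v$ and $e$, forcing $ve\in\varepsilon$. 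What the polytope route buys is a two-line computation and a uniform interpretation of all coefficients of $I_G$ as $h^*$-numbers; what it costs is reliance on the deep theorem of \cite{Kalman3}, which is heavier than the statement itself and foreign to the purely combinatorial toolkit (hypertrees, transfers of valence, activities) in which both \cite{Kalman1} and the present paper operate. Your closing caveat is well taken: the $K_{3,3}$ computation (the hypertree $(0,2,0)$ has $\overline{\iota}=1$ but is two transfers away from the unique $\overline{\iota}=0$ hypertree $(2,0,0)$) correctly shows that a naive bijection between $\overline{\iota}=1$ hypertrees and fundamental cycles fails, which is precisely why the combinatorial proof is more delicate than one might first hope.
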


For a connected bipartite graph $G=(V\cup E, \varepsilon)$, if $G_{i}$ ($i=1,2$) is the bipartite graph obtained from $G$ by adding $m_{i}$ new vertices to $V$ and joining them to the same pair of vertices $e_{i},e'_{i}\in E$ for some positive integer $m_{i}$. Let $G^{i}$ be the bipartite graph obtained from $G$ by identifying $e_{i}$ and $e'_{i}$ and replacing all multi-edges by single ones for $i=1,2$. We know that $I_{G}(x)=I_{G_{1}}(x)-m_{1}xI_{G^{1}}(x)=I_{G_{2}}(x)-m_{2}xI_{G^{2}}(x)$ by Lemma \ref{thm 10}. Since the coefficient of the linear term in $I_{G^{i}}(x)$  is $n(G^{i})$ for $i=1,2$ by Theorem \ref{lem 10}, we know that if $m_{1}n(G^{1})=m_{2}n(G^{2})$, then the coefficients of the quadratic terms in the interior polynomials $I_{G_{1}}(x)$ and $I_{G_{2}}(x)$ are equal. In particular, we have:

\begin{corollary}
If $m_{1}=m_{2}$ and $|N_{G}(e_{1})\cap N_{G}(e'_{1})|=|N_{G}(e_{2})\cap N_{G}(e'_{2})|$, then the coefficients of the quadratic terms in the interior polynomials $I_{G_{1}}(x)$ and $I_{G_{2}}(x)$ are equal.
\end{corollary}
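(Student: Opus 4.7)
The plan is to reduce the corollary to showing $n(G^{1}) = n(G^{2})$, which together with $m_{1} = m_{2}$ gives $m_{1} n(G^{1}) = m_{2} n(G^{2})$; the paragraph immediately preceding the corollary already explains why this latter equality forces the quadratic coefficients of $I_{G_{1}}(x)$ and $I_{G_{2}}(x)$ to agree. Unpacking this, Lemma \ref{thm 10} gives $I_{G_{i}}(x) = I_{G}(x) + m_{i} x I_{G^{i}}(x)$, so extracting the coefficient of $x^{2}$ yields
\[
[x^{2}] I_{G_{i}}(x) = [x^{2}] I_{G}(x) + m_{i} \cdot [x] I_{G^{i}}(x),
\]
and by Theorem \ref{lem 10} the linear coefficient of $I_{G^{i}}(x)$ equals the nullity $n(G^{i})$. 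Thus the desired equality of quadratic coefficients reduces to the single identity $m_{1} n(G^{1}) = m_{2} n(G^{2})$.

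The remaining step is a direct computation of $n(G^{i}) = |\varepsilon(G^{i})| - |V(G^{i})| - |E(G^{i})| + 1$ from the definition of $G^{i}$. The graph $G^{i}$ is built from $G$ by identifying $e_{i}$ with $e'_{i}$ and then deleting one copy of each resulting multi-edge, so $|V(G^{i})| = |V|$, $|E(G^{i})| = |E| - 1$, and $|\varepsilon(G^{i})| = |\varepsilon(G)| - |N_{G}(e_{i}) \cap N_{G}(e'_{i})|$, since exactly one edge is lost for each common neighbor in $V$. Substituting gives
\[
n(G^{i}) = n(G) + 1 - |N_{G}(e_{i}) \cap N_{G}(e'_{i})|,
\]
and the hypothesis $|N_{G}(e_{1}) \cap N_{G}(e'_{1})| = |N_{G}(e_{2}) \cap N_{G}(e'_{2})|$ immediately yields $n(G^{1}) = n(G^{2})$, which finishes the argument.

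There is no real obstacle in this proof; the only bookkeeping hazard worth flagging is the edge count, where one must remember both that merging $e_{i}$ with $e'_{i}$ reduces $|E|$ by one and that each vertex in $V$ lying in $N_{G}(e_{i}) \cap N_{G}(e'_{i})$ originally contributed a pair of parallel edges which then collapses into a single edge after the multi-edge removal. Once this accounting is right, the proof is a short chain of substitutions using Lemma \ref{thm 10}, Theorem \ref{lem 10}, and the definition $n(G) = |\varepsilon| - |V| - |E| + 1$.
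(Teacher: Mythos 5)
Your proof is correct and follows exactly the route the paper intends: the paragraph preceding the corollary reduces the claim to $m_{1}n(G^{1})=m_{2}n(G^{2})$, and the paper's own proof is the one-line observation that the neighbourhood hypothesis forces $n(G^{1})=n(G^{2})$. Your explicit count $n(G^{i})=n(G)+1-|N_{G}(e_{i})\cap N_{G}(e'_{i})|$ (one vertex lost from $E$, one edge lost per common neighbour after collapsing multi-edges) is precisely the bookkeeping the paper leaves implicit, and it is done correctly.
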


\begin{proof}
This is because $|N_{G}(e_{1})\cap N_{G}(e'_{1})|=|N_{G}(e_{2})\cap N_{G}(e'_{2})|$ will imply $n(G^{1})=n(G^{2})$.
\end{proof}

For a connected bipartite graph $G=(V\cup E, \varepsilon)$, without loss of generality, we assume that $|V|\leq|E|=n$. Let $t=|E|-|V|$.  Denote by  $G_{n}$ the bipartite graph obtained from $G$ by adding $t$ vertices to $V$ and joining them to the same pair of vertices $e_{1},e_{2}\in E$.  Denote by $G^{1}$ the bipartite graph obtained from $G$ by identifying $e_{1}$ and $e_{2}$ and replacing all multi-edges by single ones. Next, we recursively define two families of bipartite graphs as follows. Let $G_{n-i}=(E_{n-i}\cup V_{n-i}, \varepsilon_{n-i})$ be the bipartite graph obtained from $G^{i}=(E^{i}\cup V^{i}, \varepsilon^{i})$ by adding $t-i$ vertices to $V^{i}$ and joining them to the same pair of vertices $e^{i}_{1},e^{i}_{2}\in E^{i}$ for any $i=1,2,\cdots,t$. Let $G^{i+1}$ be the bipartite graph obtained from $G^{i}$ by identifying $e^{i}_{1}$ and $e^{i}_{2}$ and replacing all multi-edges by single ones for any $i=1,2,\cdots,t-1$.

\begin{corollary}
With notations above, we have $I_{G}(x)=\sum\limits_{i=0}^{t}\frac{t!}{(t-i)!}(-1)^{i}x^{i}I_{G_{n-i}}(x)$.
\end{corollary}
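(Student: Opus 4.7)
The plan is to iteratively apply Lemma \ref{thm 10} along the two parallel recursive sequences $\{G^{i}\}$ and $\{G_{n-i}\}$, and collect the coefficients. Setting $G^{0}=G$, Lemma \ref{thm 10} applied to $G^{i}$ with the pair $\{e^{i}_{1},e^{i}_{2}\}$ and $t-i$ new vertices gives, for each $i=0,1,\ldots,t-1$,
\begin{equation*}
I_{G^{i}}(x)\;=\;I_{G_{n-i}}(x)\;-\;(t-i)\,x\,I_{G^{i+1}}(x).
\end{equation*}
This is the one-step recurrence that drives everything.

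The main step is to prove by induction on $k\in\{0,1,\ldots,t\}$ the identity
\begin{equation*}
I_{G}(x)\;=\;\sum_{i=0}^{k-1}(-1)^{i}\,\frac{t!}{(t-i)!}\,x^{i}\,I_{G_{n-i}}(x)\;+\;(-1)^{k}\,\frac{t!}{(t-k)!}\,x^{k}\,I_{G^{k}}(x).
\end{equation*}
The base case $k=0$ is trivial: the sum is empty and the remainder term is $I_{G}(x)$. For the inductive step, assume the identity at level $k<t$; substitute the one-step recurrence $I_{G^{k}}(x)=I_{G_{n-k}}(x)-(t-k)\,x\,I_{G^{k+1}}(x)$ into the remainder, and observe that
\begin{equation*}
(-1)^{k}\,\frac{t!}{(t-k)!}\,x^{k}\cdot(t-k)\,x\;=\;-(-1)^{k+1}\,\frac{t!}{(t-k-1)!}\,x^{k+1},
\end{equation*}
so the coefficients fit the desired pattern at level $k+1$.

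To finish, set $k=t$. By construction, at step $i=t$ we would add $t-t=0$ vertices to $G^{t}$, so $G_{n-t}=G^{t}$, and the remainder term becomes $(-1)^{t}\,\frac{t!}{0!}\,x^{t}\,I_{G_{n-t}}(x)$, which is exactly the last summand of the target formula. Combining with the previously accumulated sum yields
\begin{equation*}
I_{G}(x)\;=\;\sum_{i=0}^{t}(-1)^{i}\,\frac{t!}{(t-i)!}\,x^{i}\,I_{G_{n-i}}(x),
\end{equation*}
as claimed.

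The only subtle point, and the only place one might stumble, is the identification $G^{t}=G_{n-t}$ at the terminal index: one must verify that the recursive construction in the paragraph preceding the corollary is consistent with the convention that ``adding $0$ vertices'' is the identity operation on the graph, so that the $i=t$ term of the target sum is the correct absorption of the remainder. Everything else is just bookkeeping of the falling factorial $t(t-1)\cdots(t-i+1)=t!/(t-i)!$ produced by the successive multiplications by $(t-i)$ in the one-step recurrence.
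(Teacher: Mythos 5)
Your proof is correct and follows essentially the same route as the paper: iterate Lemma \ref{thm 10} to get the one-step recurrences $I_{G^{i}}(x)=I_{G_{n-i}}(x)-(t-i)xI_{G^{i+1}}(x)$ and telescope, with the falling factorial $t!/(t-i)!$ accumulating from the successive factors $(t-i)$. Your version is merely more explicit about the induction and the terminal identification $G^{t}=G_{n-t}$, which the paper leaves implicit.
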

\begin{proof}
If $t=0$, then the corollary is obvious. If $t\neq0$, then by Lemma \ref{thm 10}, we have that  $I_{G^{i}}(x)=I_{G_{n-i}}(x)-(t-i)xI_{G^{i+1}}(x)$ for any $i=1,2,\cdots,t$. Combining with $I_{G}(x)=I_{G_{n}}(x)-txI_{G^{1}}(x)$, we have $I_{G}(x)=\sum\limits_{i=0}^{t}\frac{t!}{(t-i)!}(-1)^{i}x^{i}I_{G_{n-i}}(x)$.
\end{proof}

Note that $|V_{n-i}|=|E_{n-i}|=n-i$ for all $i=1,2,\cdots,t$ by the construction of $G_{n-i}$, we have:
\begin{theorem}
The interior polynomial of any connected bipartite graph can be written as a linear combination of the interior polynomials of connected balanced bipartite graphs.
\end{theorem}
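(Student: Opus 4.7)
The plan is to derive the theorem directly from the preceding Corollary after two small reductions. First, by Theorem \ref{thm 4}, the interior polynomial is invariant under passing to the abstract dual, i.e., under swapping the two colour classes $V$ and $E$. We may therefore assume without loss of generality that $|V|\leq |E|=n$, and set $t=|E|-|V|\geq 0$. If $t=0$, then $G$ is itself a connected balanced bipartite graph and the theorem is trivial, so suppose $t\geq 1$ and invoke the sequences $G^{1},G^{2},\ldots,G^{t}$ and $G_{n},G_{n-1},\ldots,G_{n-t}$ constructed in the paragraph before the Corollary.

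Second, that Corollary already supplies the identity
$$I_{G}(x)=\sum_{i=0}^{t}\frac{t!}{(t-i)!}(-1)^{i}x^{i}I_{G_{n-i}}(x),$$
which displays $I_{G}(x)$ as a $\mathbb{Z}[x]$-linear combination of the polynomials $I_{G_{n-i}}(x)$. It only remains to verify that each of the bipartite graphs $G_{n-i}$ appearing on the right-hand side is \emph{connected} and \emph{balanced}. For balancedness, an immediate induction on the defining construction yields $|V^{i}|=|V|$ and $|E^{i}|=n-i$ (identifying a pair of $E$-vertices decreases $|E|$ by one and leaves $|V|$ unchanged), so that $|V_{n-i}|=|V^{i}|+(t-i)=n-i=|E^{i}|=|E_{n-i}|$, as already recorded in the remark immediately preceding the Theorem. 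For connectedness, $G=G^{0}$ is connected by assumption, identifying two vertices of a connected graph preserves connectedness (so each $G^{i}$ is connected), and attaching new vertices that are all joined to existing vertices preserves connectedness (so each $G_{n-i}$ is connected).

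The argument introduces no new idea beyond those embodied in Lemma \ref{thm 10} and the preceding Corollary; it is essentially a bookkeeping verification. The only step that could be called an obstacle is the initial reduction to $|V|\leq |E|$, but Theorem \ref{thm 4} makes this reduction immediate, so in truth no genuine difficulty arises.
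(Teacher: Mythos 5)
Your proposal is correct and follows essentially the same route as the paper: the paper likewise derives the theorem immediately from the preceding Corollary together with the observation that $|V_{n-i}|=|E_{n-i}|=n-i$ by construction. Your additional explicit checks of connectedness and the balancedness count are accurate bookkeeping that the paper leaves implicit.
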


\section{The exterior polynomial}
\noindent

In this section, we study interpolatory property  and the coefficient of the linear term of the exterior polynomial.

Without loss of generality, we assume that vertices of $E$ will be regarded as hyperedges for a connected bipartite graph $G=(V\cup E, \varepsilon)$ in this section.

\subsection{Interpolatory property}
\noindent

In this subsection, we will show that the exterior polynomial of any connected bipartite graph is interpolating. We also need some lemmas similar to the interior polynomial.

\begin{lemma}\label{lem 7}
Let $G=(V\cup E, \varepsilon)$ be a connected bipartite graph and $f$ be a hypertree of $G$. Given an order on $E$, then the hyperedge $e$ is externally inactive with respect to $f$ if and only if there exists a hyperedge $e'<e$ so that  $f(e')\neq 0$ and every subset of $E$, which contains $e$ and does not contain $e'$, is not tight at $f$.
\end{lemma}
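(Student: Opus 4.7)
The plan is to derive Lemma \ref{lem 7} as an essentially immediate consequence of the definition of external inactivity together with Lemma \ref{lem 20}, which was established for the internal analogue but, as its proof only uses the valence-transfer formalism, is equally applicable here once the roles of donor and recipient are correctly identified.

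First, I would unpack Definition \ref{def 3}: the hyperedge $e$ is externally inactive with respect to $f$ precisely when there exists some $e' < e$ such that increasing $f(e)$ by $1$ while simultaneously decreasing $f(e')$ by $1$ still produces a hypertree. This is simply the statement that there exists $e' < e$ such that $e'$ can transfer valence to $e$ with respect to $f$. So the externally-inactive condition is the existential condition ``$\exists\, e' < e$ with $e'$ able to transfer valence to $e$.''

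Next, I would apply Lemma \ref{lem 20} with the roles of donor and recipient swapped relative to its statement in the internal case: taking $e'$ as the donor and $e$ as the recipient, that lemma characterises the ability of $e'$ to transfer valence to $e$ by the two conditions that $f(e') \neq 0$ and that every subset of $E$ containing $e$ (the recipient) but not $e'$ (the donor) fails to be tight at $f$. Substituting this characterisation into the condition obtained in the previous paragraph yields exactly the existential statement of Lemma \ref{lem 7}.

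The only subtlety, and the one point that needs to be written carefully, is the bookkeeping of which hyperedge plays which role in Lemma \ref{lem 20}: the subset criterion there puts the recipient inside the subset and the donor outside it, and here the recipient is $e$ (whose value we wish to increase) while the donor is $e'$ (whose value we decrease). Once this is made explicit, the ``if and only if'' is a direct translation with nothing further to verify, so I do not anticipate any genuine obstacle.
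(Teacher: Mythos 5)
Your proposal is correct and matches the paper's own proof essentially verbatim: the paper likewise rewrites external inactivity of $e$ as the existence of some $e'<e$ that can transfer valence to $e$, and then invokes Lemma \ref{lem 20} with $e'$ as donor and $e$ as recipient to obtain the tightness criterion. The role-swapping bookkeeping you flag is exactly the only point of care, and you handle it correctly.
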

\begin{proof}
The hyperedge $e$ is externally inactive with respect to  $f$, if and only if for  some hyperedge $e'<e$, $e'$ can  transfer  valence to $e$ with respect to  $f$. We know that $e'$ can transfer  valence to $e$ with respect to  $f$ if and only if $f(e')\neq 0$, and every subset $E'\subset  E$, which contains $e$ and does not contain $e'$, is not tight at $f$ by Lemma \ref{lem 20}, which completes the proof.
\end{proof}

Let $G=(V\cup E, \varepsilon)$ be a connected bipartite graph, $e_{1}$ and $e_{2}$ be hyperedges of $G$ and $e_{1}\neq e_{2}$. Let $f_{1}$ and $f_{2}$ be hypertrees of $G$ with $f_{1}(e_{1})>f_{2}(e_{1})$ and $f_{1}(e)=f_{2}(e)$ for all $e\in E$ and $e\neq e_{1},e_{2}$. The following property in Lemma \ref{lem 8} is true for two hypertrees $f_{1}$ and $f_{2}$.

\begin{lemma}\label{lem 8}
Given an order on $E$, if a hyperedge $e>e_{1}$  is externally active with respect to $f_{1}$, then it is also externally active with respect to $f_{2}$.
\end{lemma}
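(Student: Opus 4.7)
The plan is to adapt the union-based strategy from the proof of Lemma \ref{lem 5} to the external setting, now using intersections. By Lemma \ref{lem 7}, external activity of $e$ at a hypertree $f$ amounts to saying that for each $e'<e$ with $f(e')\neq 0$ there is a tight set at $f$ that contains $e$ but avoids $e'$. My aim is to aggregate these witnesses at $f_{1}$ into a single tight set $U$ that simultaneously witnesses external activity at $f_{2}$ for every admissible $e'$.

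First I would rule out $e=e_{2}$. The hypothesis $f_{1}(e_{1})>f_{2}(e_{1})\geq 0$ forces $f_{1}(e_{1})>0$, and the preservation of total valence in Theorem \ref{thm 1}(2) gives $f_{2}(e_{2})>f_{1}(e_{2})$. If $e=e_{2}$ were externally active at $f_{1}$, Lemma \ref{lem 7} applied with $e'=e_{1}<e_{2}$ would produce a set $T$ tight at $f_{1}$ with $e_{2}\in T$ and $e_{1}\notin T$; but then $\sum_{g\in T}f_{2}(g)=\mu(T)+(f_{2}(e_{2})-f_{1}(e_{2}))>\mu(T)$, contradicting Theorem \ref{thm 1}(3). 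So we may assume $e\neq e_{1},e_{2}$ from now on.

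Next, for every $e''<e$ with $f_{1}(e'')>0$, Lemma \ref{lem 7} supplies a tight set $U_{e''}$ at $f_{1}$ with $e\in U_{e''}$ and $e''\notin U_{e''}$. Let $U$ be the intersection of all these sets. By Theorem \ref{thm 2}, $U$ is tight at $f_{1}$, contains $e$, and omits every such $e''$; in particular $e_{1}\notin U$. The decisive point is to show $e_{2}\notin U$ as well. If $e_{2}<e$ and $f_{1}(e_{2})>0$, this holds by construction. Otherwise, $U\subseteq U_{e_{1}}$, and the very same computation used in the $e=e_{2}$ case shows that the alternative $e_{2}\in U_{e_{1}}$ with $e_{1}\notin U_{e_{1}}$ would give $\sum_{g\in U_{e_{1}}}f_{2}(g)>\mu(U_{e_{1}})$, contradicting Theorem \ref{thm 1}(3). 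Hence $e_{2}\notin U_{e_{1}}\supseteq U$. With both $e_{1},e_{2}\notin U$, the functions $f_{1}$ and $f_{2}$ agree on $U$, so $U$ is tight at $f_{2}$ as well.

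Finally, I would verify that $U$ actually certifies external activity at $f_{2}$. For any $e'<e$ with $f_{2}(e')\neq 0$: if $f_{1}(e')>0$ then $e'\notin U$ by construction; if $f_{1}(e')=0$, then since $f_{1}$ and $f_{2}$ coincide off $\{e_{1},e_{2}\}$ and $f_{2}(e_{1})<f_{1}(e_{1})$ rules out $e'=e_{1}$, we must have $e'=e_{2}$, which we have already excluded from $U$. Lemma \ref{lem 7} then gives the desired conclusion. The main obstacle I anticipate is exactly the step showing $e_{2}\notin U$ in the case where $e_{2}$ contributes nothing to the intersection (either $e_{2}>e$ or $f_{1}(e_{2})=0$); the Theorem \ref{thm 1}(3) bound applied to $U_{e_{1}}$, combined with $f_{2}(e_{2})>f_{1}(e_{2})$, is the single mechanism that resolves both this obstacle and the vacuity of the $e=e_{2}$ case.
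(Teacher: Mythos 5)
Your proof is correct and follows essentially the same route as the paper's: collect the tight witnesses supplied by Lemma \ref{lem 7}, intersect them via Theorem \ref{thm 2} into a single tight set containing $e$ and avoiding $e_{1}$, and use the pointwise comparison of $f_{1}$ and $f_{2}$ on that set together with Theorem \ref{thm 1}(3) to transfer tightness to $f_{2}$. The only organizational difference is that you dispose of the cases $e=e_{2}$ and $e'=e_{2}$ by showing directly (again via Theorem \ref{thm 1}(3)) that $e_{2}$ cannot lie in the witness set, whereas the paper handles $e'=e_{2}$ by invoking Lemma \ref{lem 4}; both are valid.
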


\begin{proof}
If the hyperedge $e$  is externally active with respect to  $f_{1}$, then for any  hyperedge $e'<e$, (i) $f_1(e')=0$ or (ii)  there exists a subset $U'\subset  E$, which contains $e$ and does not contain $e'$,  is  tight at $f_{1}$ by Lemma \ref{lem 7}. We first claim that $e_{2}$ cannot transfer valence to $e$ for $f_{2}$. Otherwise, by Lemma \ref{lem 4}, $e_{1}$ can transfer valence to $e$ for $f_{1}$, which implies that the hyperedge $e$  is externally inactive with respect to  $f_{1}$ as $e>e_{1}$, a contradiction. If $f_{1}(e')=0$ ($e'$ can not be $e_1$) and $e'\neq e_{2}$, then $e'$ cannot transfer valence to $e$ for $f_{2}$ since $f_{2}(e')=f_{1}(e')=0$. If (ii) is true, then in particular, there exists a subset $U_{1}\subset  E$, which contains $e$ and does not contain $e_{1}$,  is  tight at $f_{1}$, since $e_{1}<e$. Take $U=U'\cap U_{1}$ (it is possible that $U'=U_{1}$). We have that  the subset $U\subset  E$, which contains $e$, and does not contain $e'$ and $e_{1}$,  is  tight at $f_{1}$ by Theorem \ref{thm 2}. Note that
$\sum\limits_{e\in U}f_{1}(e)\leq \sum\limits_{e\in U}f_{2}(e)$. We have that $U$ is  tight at $f_{2}$. It implies that $e'$ cannot transfer valence to $e$ for $f_{2}$. Thus, $e$  is externally active with respect to  $f_{2}$.
\end{proof}

\begin{lemma}\label{lem 9}
Let $G=(V\cup E, \varepsilon)$ be a connected bipartite graph. If there exists  a hypertree with external inactivity $k$ for $G$,  then there exists  a hypertree with external inactivity $k-1$ in $G$ for any integer $k\geq1$.
\end{lemma}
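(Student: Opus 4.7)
The proof will mimic that of Lemma \ref{lem 6}, with two changes dictated by the exterior setting. Since an externally inactive hyperedge is one that can \emph{receive} valence from a smaller hyperedge, we transfer valence \emph{into} the distinguished hyperedge rather than out of it; accordingly, the extremal hypertree $f$ will be chosen to \emph{maximize} rather than minimize the value there, and Lemma \ref{lem 8} plays the role that Lemma \ref{lem 5} played in Lemma \ref{lem 6}.

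Fix an order on $E$ and let $f_1$ be a hypertree with external inactivity $k$. Let $e_m$ be the smallest externally inactive hyperedge with respect to $f_1$, and among all hypertrees of external inactivity $k$ whose smallest externally inactive hyperedge is still $e_m$, choose $f$ to maximize $f(e_m)$. Since $e_m$ is externally inactive at $f$, some $e' < e_m$ can transfer valence to $e_m$; let $e_n$ be the smallest such. Define $g$ by $g(e_n)=f(e_n)-1$, $g(e_m)=f(e_m)+1$, and $g(e)=f(e)$ otherwise; then $g$ is a hypertree. The plan is to show that $g$ has external inactivity exactly $k-1$, by verifying that every hyperedge other than $e_m$ retains its external activity status while $e_m$ switches from inactive to active.

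The case analysis runs as follows. For $e \leq e_n$, external activity at $f$ holds by the minimality of $e_m$. To pass activity to $g$: for each $e'' < e$ with $f(e'') \neq 0$, combine a tight-at-$f$ set $E_1 \ni e$ missing $e''$ (from activity of $e$ at $f$, via Lemma \ref{lem 7}) with a tight-at-$f$ set $E_2 \ni e_m$ missing $e''$ (from minimality of $e_n$, via Lemma \ref{lem 20}); then $E' := E_1 \cup E_2$ is tight at $f$ by Theorem \ref{thm 2}, contains $e_m$, and---because $g$ is a hypertree---must also contain $e_n$ (otherwise $\sum_{E'} g = \mu(E')+1$, a contradiction). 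Hence $E'$ is tight at $g$, and Lemma \ref{lem 7} gives activity of $e$ at $g$. For $e_n < e < e_m$ and for active $e > e_m$ at $f$, Lemma \ref{lem 8} with $e_1 = e_n$ transfers activity to $g$. For inactive $e > e_m$ at $f$, applying Lemma \ref{lem 8} with $e_1 = e_m$ and the roles of $f$ and $g$ reversed---legitimate since $g(e_m) > f(e_m)$---yields, contrapositively, that $e$ remains inactive at $g$. Finally, if $e_m$ were still externally inactive at $g$, the preceding analysis would make $g$ a hypertree of external inactivity $k$ with $e_m$ still the smallest externally inactive hyperedge, while $g(e_m) = f(e_m)+1 > f(e_m)$, contradicting the choice of $f$.

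The main technical point is the tight-set manipulation for $e \leq e_n$. Replacing the intersection used in Lemma \ref{lem 6} by a union, the key observation is that any tight-at-$f$ set containing $e_m$ is automatically tight at $g$: once $e_m$ is in the set, the constraint $\sum_{E'} g \leq \mu(E')$ forces $e_n$ into the set as well, so the net change from the transfer vanishes. This is precisely the structural reason why, in the exterior setting, the correct extremal choice is to \emph{maximize} $f(e_m)$ rather than minimize it.
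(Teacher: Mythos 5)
Your proof is correct and follows essentially the same route as the paper's: the same extremal choice of $f$ maximizing $f(e_m)$, the same transfer from $e_n$ to $e_m$, and the same use of Lemmas \ref{lem 7}, \ref{lem 20}, \ref{lem 8} and Theorem \ref{thm 2} in the same case analysis. The only (harmless) deviation is that you absorb the hyperedge $e_n$ into the general tight-set argument for $e\le e_n$, whereas the paper treats $e_n$ separately by composing the two transfers to contradict the minimality of $e_n$; both are valid.
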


\begin{proof}
Given an order on $E$, assume $f_{1}$ is a hypertree with external inactivity $k$ in $G$ and $e_{m}$ is the smallest hyperedge that is externally inactive with respect to $f_{1}$. Let $f$ be a hypertree with external inactivity $k$  and the hyperedge $e_{m}$ is the smallest externally inactive hyperedge  with respect to $f$ so that the entry $f(e_{m})$ of $e_{m}$ is biggest. Let $e_{n}$ be the smallest hyperedge that can transfer valence to $e_{m}$. Then there is the hypertree $g$ with $g(e_{m})=f(e_{m})+1$, $g(e_{n})=f(e_{n})-1$ and $g(e)=f(e)$ for all $e\neq e_{m},e_{n}$.  Next we prove that the hyperedge $e$ is externally active with respect to $g$ if and only if $e$ is externally active with respect to $f$ for every hyperedge $e\neq e_{m}$.

For any hyperedge $e$ with $e<e_{n}$, since $e$ is externally active with respect to $f$, for every  hyperedge $e'<e$, (i)  $f(e')=0$ or (ii)  there exists a subset $U'\subset E$, which contains $e$ and does not contain $e'$,  is  tight at $f$  by Lemma \ref{lem 7}. If (i) holds, then $g(e')=f(e')=0$, clearly, $e'$  cannot transfer valence to $e$ with respect to  $g$. Assume that (ii) is true. Since $e'$  can not transfer valence to $e_{m}$ with respect to $f$ there exists $E_{2}\subset E$, which contains $e_{m}$ and does not contain $e'$, is tight at $f$ by Lemma \ref{lem 20}. Take $E'=U'\cup E_{2}$.  Then $E'$, which contains $e_{m}$ and $e$ and does not contain $e'$,  is  tight at $f$ by Theorem \ref{thm 2}. Note that $\sum\limits_{e\in E'}f(e)\leq \sum\limits_{e\in E'}g(e)$. We have that  $E'$ is  tight at $g$. Then for every hyperedge $e'<e$, $e'$  cannot transfer valence to $e$ with respect to  $g$. Thus,  $e$ is externally active with respect to $g$.

For the hyperedge $e_{n}$, we claim $e_{n}$ is externally active with respect to $g$. Assume that the opposite is true. Then for some hyperedge $e'<e_{n}$, $e'$ can  transfer valence to $e_{n}$ with respect to $g$. Then there exists the hypertree $f'$ satisfying $f'(e_{m})=f(e_{m})+1$ and $f'(e')=f(e')-1$ and $f'(e)=f(e)$ for all $e\in E$ and $e\neq e_{m},e_{n}$, that is, $e'$ can  transfer valence to $e_{m}$ with respect to $f$. This  contradicts  the choice of the hyperedge $e_{n}$.

For any externally active hyperedge  $e>e_{n}$ with respect to $f$ (in fact, $e$ is externally active if $e_{n}<e<e_{m}$ with respect to $f$), $e$ is also externally active with respect to $g$ by Lemma \ref{lem 8}.

For any externally inactive hyperedge $e>e_{m}$  with respect to $f$, we claim that $e$ is externally inactive with respect to $g$. Otherwise, $e$ is externally active with respect to $f$ by Lemma \ref{lem 8}, a contradiction.

It is obvious that $e_{m}$ is externally active with respect to $g$ by the choice of the hypertree $f$. In fact, if $e_{m}$ is externally inactive with respect to $g$, then  $g$ is a hypertree with externally inactive $k$ and the hyperedge $e_{m}$ is the smallest externally inactive hyperedge  with respect to $f$.  Since $f(e_{m})<g(e_{m})$, it contradicts the choice of the hypertree $f$. Thus, $g$ is a hypertree with externally inactive $k-1$.
\end{proof}
\begin{theorem}
The exterior polynomial of any connected bipartite graph is interpolating.
\end{theorem}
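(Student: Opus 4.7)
The plan is to argue by immediate appeal to Lemma \ref{lem 9}, exactly as the interpolatory theorem for the interior polynomial followed from Lemma \ref{lem 6}. A polynomial is interpolating when its support is an integer interval, so I must show that the set $S := \{\overline{\epsilon}(f) : f \in B_{G}\}$ of exponents actually appearing in $X_{G}(y)$ is such an interval. Since $\overline{\epsilon}(f) \geq 0$ for every hypertree, the minimum of $S$ is a non-negative integer; in fact it equals $0$, because $X_{G}(y)$ has constant term $1$ by K\'alm\'an's result recalled in the introduction. The maximum of $S$ is $d := \deg X_{G}(y)$, which is finite since $B_{G}$ is finite.

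It then suffices to show that $S$ contains every integer between $0$ and $d$. This is precisely the content of Lemma \ref{lem 9}: if $k \in S$ with $k \geq 1$, then $k-1 \in S$. Iterating this downward step starting from $k = d$ produces a chain of hypertrees realising each value $d, d-1, \ldots, 1, 0$ in turn, so $S = \{0, 1, \ldots, d\}$ and $X_{G}(y)$ is interpolating.

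The real work has already been carried out inside Lemma \ref{lem 9}. That argument extremises a hypertree $f$ of external inactivity $k$ by choosing, among all such hypertrees whose smallest externally inactive hyperedge $e_{m}$ is fixed, one for which $f(e_{m})$ is maximal, then performs a single transfer of valence into $e_{m}$ from the smallest hyperedge $e_{n}$ that can feed it, and finally uses Theorem \ref{thm 2} together with Lemmas \ref{lem 7} and \ref{lem 8} to verify that this transfer converts exactly one externally inactive hyperedge (namely $e_{m}$) into an externally active one while preserving the activity status of every other hyperedge. Granted that lemma, the theorem is a one-line consequence and there is no further obstacle to address.
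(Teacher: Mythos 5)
Your proof is correct and takes essentially the same route as the paper, which likewise derives the theorem as an immediate consequence of Lemma \ref{lem 9} by iterating the step from external inactivity $k$ down to $k-1$. Your additional remarks on the minimum and maximum of the support only make explicit what the paper leaves implicit.
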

\begin{proof}
It follows directly from Lemma \ref{lem 9}.
\end{proof}

\subsection{Coefficient of linear term}
\noindent

In this subsection, we will study the coefficient of the linear term of the exterior polynomial by a similar argument in the interior polynomial.

Before studying the coefficient of the linear term of the exterior polynomial, we firstly recall the coefficient of constant term of the interior polynomial and the exterior polynomial.

\begin{theorem}[\cite{Kalman1}]\label{thm 16}
For any connected bipartite graph, the coefficient of the constant term of the interior polynomial and the exterior polynomial are $1$.
\end{theorem}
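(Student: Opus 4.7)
The plan is to show that each constant term counts exactly one hypertree: a unique $f\in B_{\mathcal H}$ has $\overline{\iota}(f)=0$ (and contributes to the constant term of $I_G$) and a unique $f\in B_{\mathcal H}$ has $\overline{\epsilon}(f)=0$ (and contributes to the constant term of $X_G$). My strategy is to identify these extremal hypertrees with the lexicographically largest and smallest elements of $B_{\mathcal H}$, respectively, once a total order on $E$ is fixed.

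For the interior polynomial, fix any order on $E$ and let $f^{*}$ be the hypertree whose vector $(f^{*}(e_1),\ldots,f^{*}(e_{|E|}))$ is lexicographically largest; this exists and is unique by finiteness of $B_{\mathcal H}$. I would first argue that every hyperedge is internally active at $f^{*}$: if some $e_j$ were internally inactive, then a transfer of valence from $e_j$ to some $e_i$ with $i<j$ would leave coordinates $1,\ldots,i-1$ unchanged and strictly increase coordinate $i$, contradicting lex-maximality. For uniqueness, suppose $f\neq f^{*}$ is any other hypertree and let $i$ be the smallest index at which $f(e_i)\neq f^{*}(e_i)$; then $f(e_i)<f^{*}(e_i)$. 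With $E'=\{e_1,\ldots,e_i\}$ this gives $\sum_{e\in E'}f(e)<\sum_{e\in E'}f^{*}(e)\leq\mu(E')$ by Theorem \ref{thm 1}(3), so $E'$ is not tight at $f$. Lemma \ref{lem 1} then produces some $e_j\in E\setminus E'$ (so $j>i$) that can transfer valence to an element of $E'$, meaning $e_j$ is internally inactive at $f$. Hence $f^{*}$ is the only hypertree with all hyperedges internally active.

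The exterior case is dual: take $f_{*}$ the lex-smallest hypertree. If any $e_j$ were externally inactive at $f_{*}$, then valence could be transferred to $e_j$ from some $e_i<e_j$, decreasing coordinate $i$ and producing a lex-smaller hypertree — impossible. For uniqueness, suppose $f\neq f_{*}$ is any other hypertree, let $i$ be the smallest index with $f(e_i)>f_{*}(e_i)$, and set $E'=\{e_{i+1},\ldots,e_{|E|}\}$. Since $\sum_{e\in E}f(e)=\sum_{e\in E}f_{*}(e)=|V|-1$ by Theorem \ref{thm 1}(2), we obtain $\sum_{e\in E'}f(e)<\sum_{e\in E'}f_{*}(e)\leq\mu(E')$, so $E'$ is not tight at $f$. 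Lemma \ref{lem 1} then supplies some $e_k\notin E'$ (so $k\leq i$) transferring valence to some $e_j\in E'$ (so $j>i\geq k$), making $e_j$ externally inactive at $f$. So $f_{*}$ is the unique hypertree with every hyperedge externally active.

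I do not expect any real obstacle; the argument rests on pairing Lemma \ref{lem 1} (a non-tight subset admits an incoming transfer) with the bound $\sum_{e\in E'}f(e)\leq\mu(E')$ from Theorem \ref{thm 1}(3). The only care needed is choosing $E'$ correctly — the prefix $\{e_1,\ldots,e_i\}$ for the interior side and the suffix $\{e_{i+1},\ldots,e_{|E|}\}$ for the exterior side — so that the direction of the forced valence transfer (a larger index transferring downward, or a smaller index transferring upward) matches the kind of (in)activity being detected.
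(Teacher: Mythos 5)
Your argument is correct. Note, however, that the paper does not prove this statement at all: Theorem \ref{thm 16} is quoted from \cite{Kalman1}, and the only trace of its proof in the paper is the description (just before Theorem \ref{thm 11}) of the \emph{exterior greedy hypertree} $g'$ with $g'(e)=d_G(e)-1-n(e)$, which K\'alm\'an constructs explicitly via nullities of the subgraphs $G_k$ and shows to be the unique hypertree with external inactivity $0$. Your route is genuinely different in presentation: instead of exhibiting an explicit formula, you take the lexicographically extremal elements of $B_{\mathcal H}$ and verify non-constructively that they are precisely the hypertrees with all hyperedges (internally, resp.\ externally) active, using only Lemma \ref{lem 1} and Theorem \ref{thm 1}(2)--(3). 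The two approaches in fact single out the same objects --- tightness of every suffix $E_k$ at $g'$ is exactly lex-minimality --- but yours buys a shorter, self-contained proof from the lemmas already stated in the paper, at the cost of not producing the explicit degree formula for the extremal hypertree (which the paper later needs in the proof of Theorem \ref{thm 11}). Two small points worth making explicit: in the uniqueness step for $X_G$, the index $i$ you choose coincides with the first index where $f$ and $f_*$ differ (by lex-minimality of $f_*$), which is what justifies $\sum_{e\in E'}f(e)<\sum_{e\in E'}f_*(e)$ for the suffix $E'$; and in both uniqueness steps one should observe that $i<|E|$ (otherwise $f$ and the extremal hypertree would have equal total sum $|V|-1$ yet differ only in the last coordinate), so the set $E'$ and its complement are both nonempty and Lemma \ref{lem 1} applies.
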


Let $G=(V\cup E, \varepsilon)$ be a connected bipartite graph. Given an arbitrary order on $E$ ($e_{1}<e_{2}<\cdots<e_{|E|}$), let $G_{k}=(E_{k}\cup V_{k}, \varepsilon_{k})$ be the bipartite graph formed by $E_{k}=\{e_{k},e_{k+1},\cdots,e_{|E|}\}$, all edges of $G$ incident with some element of $E_{k}$, and their endpoints in $V$ for each $k=1,2,\cdots,|E|$. Clearly, $G_{1}=G$ and $G_{|E|}$ is a star. Let $n(G_{k})$ be the nullity of the graph $G_{k}$, that is, $n(G_{k})=|\varepsilon_{k}|-(|V_{k}|+|E_{k}|)+c_{k}$, where $c_{k}$ is the number of connected components of $G_{k}$. Then $n(G_{1})=n(G)$ and $n(G_{|E|})=0$. Let $n(e_{k})=n(G_{k})-n(G_{k+1})$ for any $k=1,2,\cdots,|E|-1$ and $n(e_{|E|})=0$. Moreover, a hypertree $g'$ of $G$ is defined as $g'(e)=d_{G}(e)-1-n(e)$ for all $e\in E$. By the proof of Theorem \ref {thm 16} in \cite{Kalman1}, we know that $E_{k}$ is tight at $g'$ for any $k=1,2,\cdots,|E|$. Moreover, $g'$ is the unique hypertree with external inactivity 0, called \emph{the exterior  greedy hypertree} of $G$.

\begin{theorem}\label{thm 11}
For a connected bipartite graph $G=(V\cup E, \varepsilon)$, if  $G-e$ is connected for each $e\in E$, then the coefficient of the linear term of  the exterior polynomial is $|V|-1$.
\end{theorem}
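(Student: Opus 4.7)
Since $[y^{1}] X_{G}(y) = |\{f \in B_{G} : \overline{\epsilon}(f) = 1\}|$, the statement reduces to counting hypertrees with exactly one externally inactive hyperedge. My plan is to match this count with the identity
\[
\sum_{e \in E} g'(e) = |V| - 1
\]
(Theorem~\ref{thm 1}(2)), where $g'$ denotes the exterior greedy hypertree introduced just before the theorem statement.

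First, I would use the hypothesis to pin down $g'(e_{1}) = 0$. Indeed, $G - e_{1}$ being connected together with the definition of $G_{2}$ gives $V_{2} = V$ and $c_{2} = 1$, so
\[
n(e_{1}) = n(G_{1}) - n(G_{2}) = d_{G}(e_{1}) - 1,
\]
whence $g'(e_{1}) = d_{G}(e_{1}) - 1 - n(e_{1}) = 0$. This ensures the smallest hyperedge carries no greedy weight, a useful base case when unwinding the greedy structure at higher levels.

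Next, I would characterize every hypertree $f$ with $\overline{\epsilon}(f) = 1$ as arising from $g'$ by a sequence of ``descending'' valence transfers $e_{m} \to e_{n}$ (with $n < m$). Indeed, following the mechanism behind Lemma~\ref{lem 9}, from $f$ one may repeatedly transfer valence from the current unique externally inactive hyperedge $e_{m}$ down to the smallest $e_{n} < e_{m}$ that can receive it, ending at the unique hypertree of external inactivity $0$, namely $g'$. Running this in reverse constructs each $\overline{\epsilon} = 1$ hypertree from $g'$. I would then set up a bijection with the set of ``units of greedy weight'' $\{(e_{k}, i) : 1 \leq i \leq g'(e_{k})\}$, of size $\sum_{e} g'(e) = |V|-1$: for each $e_{k}$ with $g'(e_{k}) > 0$, build a chain $g' = f_{0}, f_{1}, \ldots, f_{g'(e_{k})}$ of hypertrees with $\overline{\epsilon}(f_{i}) = 1$ for $i \geq 1$, pushing one unit downward at each step to a target (typically the largest valid smaller hyperedge) chosen so that the resulting tight-set structure at $f_{i}$ prevents any second hyperedge from becoming externally inactive.

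\paragraph{Main obstacle.} The crux is verifying that each chain step yields external inactivity \emph{exactly} $1$, and that the chains are disjoint and exhaust all $\overline{\epsilon}=1$ hypertrees. For a transfer $e_{m} \to e_{n}$ applied to hypertree $h$, Lemma~\ref{lem 8} controls hyperedges above $e_{m}$; but for intermediate hyperedges $e$ with $e_{n} \leq e < e_{m}$ one must exhibit, at the new hypertree, a tight set containing $e$ but not $e_{n}$ to block spurious transfers. Via Theorem~\ref{thm 2} and direct computation, a tight set $S$ at $h$ remains tight after the transfer precisely when $S$ contains both or neither of $\{e_{n}, e_{m}\}$, while new tight sets can only appear with $e_{n} \in S$ and $e_{m} \notin S$; the hypothesis that $G - e$ is connected for each $e \in E$ is what supplies enough surviving tight sets through each stage of each chain and forces the ``correct'' choice of target at each step, ensuring the count $|V|-1$.
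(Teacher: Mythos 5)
Your plan is in essence the paper's own: work with the exterior greedy hypertree $g'$, use $\sum_{e\in E}g'(e)=|V|-1$, and put the hypertrees of external inactivity $1$ in bijection with the pairs $(e_k,i)$, $1\le i\le g'(e_k)$, realising each pair by pushing $i$ units of valence from $e_k$ down to the largest admissible smaller hyperedges. The trouble is that the proposal stops where the real work begins, and the one mechanism you do commit to --- the role of the hypothesis that $G-e$ is connected --- is mislocated. That hypothesis is not about ``supplying surviving tight sets'' along the chain; it is what guarantees that the chain for $e_m$ reaches length $g'(e_m)$ at all. Concretely: if the chain stalls at some step $j<g'(e_m)$, then $e_m$ is internally active at the current hypertree, so by Lemma \ref{lem 3} and Theorem \ref{thm 2} (together with the tightness of $E_{m+1}$ inherited from $g'$) the whole set $E\setminus\{e_m\}$ is tight there; by Theorem \ref{thm} a spanning tree $\tau$ inducing that hypertree restricts to a spanning forest of $G|_{E\setminus\{e_m\}}=G-e_m$, which is connected by hypothesis, so $\tau-e_m$ would have to be a spanning tree of $G-e_m$ --- impossible, since $d_\tau(e_m)\ge 2$ forces $\tau-e_m$ to be disconnected. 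Without this argument you cannot rule out chains that terminate early, and the total count could fall short of $|V|-1$.

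The second gap is the one you yourself label the ``main obstacle'' and then leave open: that each chain element has external inactivity \emph{exactly} $1$, that distinct pairs $(e_k,i)$ yield distinct hypertrees, and that these exhaust all hypertrees of external inactivity $1$. The tight-set bookkeeping you sketch (a set keeps its $f$-sum iff it contains both or neither of the two transferred hyperedges) is correct but does not by itself produce, for each $e'\ne e_m$ and each potential donor $e''<e'$ with nonzero value, a tight set containing $e'$ and avoiding $e''$ as Lemma \ref{lem 7} requires. The paper settles hyperedges above $e_m$ by the tightness of the upper sets $E_j$ at both $g'$ and the new hypertree, settles hyperedges below $e_m$ by the extremal (``largest in reverse order'') choice of the representative $f_{e_m,i}$, and proves exhaustiveness by a separate maximality argument comparing an arbitrary inactivity-$1$ hypertree to $f_{e_m,i}$ on their largest point of disagreement. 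None of these verifications appear in your proposal, so as it stands it is a correct skeleton rather than a proof.
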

\begin{proof}
After arbitrarily fixing an order on $E$ ($e_{1}<e_{2}<\ldots<e_{m}<\ldots<e_{|E|}$), let $g'$ be the exterior greedy hypertree above under this order.

Firstly, we show that for any hyperedge $e_{m}$ with $g'(e_{m})\neq 0$ and for any $0\leq i \leq g'(e_{m})$, there is an $i$-element multiset consisting of  hyperedges smaller than $e_{m}$ so that the result of reducing  $g'(e_{m})$ to $g'(e_{m})-i$, and increasing the $g'$-value to each of its element of the $i$-multiset by the multiplicity of the element, is also a hypertree of $G$.

Clearly, for any hyperedge $e_{m}$, the conclusion is true for $i=0$. Assume that the opposite is true, that is, there is no such a hypertree for some $e_{m}$ and some $1\leq i \leq g'(e_{m})$. Without loss of generality, we assume that  there is a hypertree $g'_{j}$ satisfying conditions  for some $0\leq j\leq g'(e_{m})-1$ and there is no  hypertree satisfying conditions  for $j+1$. Then the hyperedge $e_{m}$ is internally active with respect to  $g'_{j}$. Since $g'_{j}(e_{m})\neq 0$, for any $e_{x}<e_{m}$, there is a subset $S_{x}\subset E$ , which contains $e_{x}$ and does not contain $e_{m}$, is tight at $g'_{j}$ by Lemma \ref{lem 3}. Note that $g'_{j}(e_{i})=g'(e_{i})$ for all $e_{i}>e_{m}$. We have that $E_{m+1}=\{e_{m+1},\cdots,e_{|E|}\}$ is tight at $g'_{j}$. Put $S=\bigcup\limits_{e_{x}<e_{m}}S_{x} \cup E_{m+1}=E\setminus \{e_{m}\}$.  Then $S$ is tight at $g'_{j}$ by Theorem \ref{thm 2}. Let $\tau$ be a spanning tree of $G$ inducing $g'_{j}$. Then $\tau|_{S}$ should be a spanning forest of $G|_{S}$ by Theorem \ref{thm}. Because $G-e$ is connected for any $e\in E$, we have that $G|_{S}=G-e_{m}$ is connected. Then $\tau|_{S}$ is a spanning tree of $G-e_{m}$, which implies that it is $\tau-e_{m}$. Since $g'(e_{m})>0$, $d_{\tau}(e_{m})\geq 2$. This implies that $\tau-e_{m}$ is disconnected, a contradiction. In addition, $g'(e_1)$ must be zero, otherwise, $G-e_1$ should not be connected.

Next we are going to construct $|V|-1$ such hypertrees as follows. Given the hyperedge $e_{m}$ with $g'(e_{m})\neq 0$ and $i$ ($1\leq i \leq g'(e_{m})$), we define $f_{e_{m},i}$: $E \rightarrow N$ be the hypertree so that its associated multiset $M_{e_{m},i}$ is the largest in reverse order, that is, if $f\neq f_{e_{m},i}$ is another hypertree (recall that $f$ is obtained by setting $f(e_m)=g'(e_{m})-i$ and increasing the $g'$-value to each of its element of one $i$-multiset consisting of  hyperedges smaller than $e_{m}$ by the multiplicity of the element) and $e'=\max\{e_{j}|f_{e_{m},i}(e_{j})\neq f(e_{j})\}$, then $f_{e_{m},i}(e')> f(e')$.
Since $\sum\limits_{e\in E}g'(e)=|V|-1$, there are $|V|-1$ those hypertrees.

In the following we show that $e_{m}$ is a unique externally inactive hyperedge with respect to $f_{e_m,i}$.

\begin{enumerate}
\item[(1)] If $e_{j}>e_{m}$, then $e_{j}$ is  externally active with respect to $f_{e_m,i}$. This is because $f_{e_{m},i}(e)=g'(e)$ for any $e>e_{m}$. $E_{j}$ is tight at $g'$ and also at $f_{e_m,i}$, one can not transfer valence from
$E\backslash E_{j}$ to $e_{j}$ with respect to $f_{e_m,i}$.

\item[(2)] If $e_{j}<e_{m}$, then  $e_{j}$ is  externally active   with respect to $f_{e_{m},i}$ due to choice of $f_{e_{m},i}$. Otherwise, there is the hypertree $g_{1}$  with $g_{1}(e_{j})=f_{e_{m},i}(e_{j})+1$, $g_{1}(e_{x})=f_{e_{m},i}(e_{x})-1$ for some $e_{x}<e_{j}$ and $g_{1}(e')=f_{e_{m},i}(e')$ for $e'\neq e_{j},e_{x}$. It contradicts the choice of $f_{e_{m},i}$.

\item[(3)] Since $E_{m}$ is tight at $g'$, it is not tight at $f_{e_{m},i}$. $E_{m+1}$ is tight both at $g'$ and $f_{e_{m},i}$.
So it is possible to transfer valence from $E\backslash E_{m}$ to $e_{m}$ for $f_{e_{m},i}$ and hence $e_{m}$ is externally inactive with respect to $f_{e_{m},i}$.
\end{enumerate}

Finally, we prove that if $f$ is a hypertree with a unique externally inactive hyperedge $e_{m}$, then $f$ is one of the $f_{e_{m},i}$'s.
\vskip0.2cm

\noindent\textbf{Claim 1.} For all $e_{x}>e_{m}$, $E_{x}=\{e_{y}\in E|e_{y}\geq e_{x}\}$ is tight at the hypertree $f$.
Assume that \textbf{Claim 1} is not true, that is,  $E_{x}=\{e_{y}\in E|e_{y}\geq e_{x}\}$ is not tight at  $f$ for some $e_{x}>e_{m}$. Then $f$ is a hypertree such that it is possible to transfer valent from element of $E\setminus E_{x}$ to some element of $E_{x}$ by Lemma \ref{lem 1}, that is, some element of $E_x$ is externally inactive  with respect to $f$, a contradiction. Thus, \textbf{Claim 1} is true.

\textbf{Claim 1} implies that $f(e_{y})=g'(e_{y})$ for all $e_{y}>e_{m}$. Since $e_{m}$ is unique externally inactive hyperedge with respect to $f$ we have $f(e_{m})<g'(e_{m})$. Let $i=g'(e_{m})-f(e_{m})$. Then the following claim holds.

\vskip0.2cm
\noindent\textbf{Claim 2.}  $f=f_{e_{m},i}$.

Assume that $f\neq f_{e_{m},i}$.  Note that  $f_{e_{m},i}(e_{x})=g'(e_{x})$ for all $e_{x}>e_{m}$ by the construction of $f_{e_{m},i}$. It implies that $f_{e_{m},i}(e_{x})=f(e_{x})$ for all $e_{x}\geq e_{m}$.  Assume that $e_{y}=\max\{e_{x}|f_{e_{m},i}(e_{x})\neq f(e_{x})\}$. There are two cases.

\noindent{\bf Case 1.} $f_{e_{m},i}(e_{y})> f(e_{y})$.

Let $S=\{e_{x}|f(e_{x})\neq 0$ and  $e_{x}<e_{y}\}$. Since $e_{y}$ is externally active  with respect to $f$, there is a subset $E_{x}$$\subset E$, which contains $e_{y}$ and does not contain $e_{x}$,  is tight at  $f$ for any $e_{x}\in S$ by Lemma \ref{lem 7}. Let $E'=\bigcap\limits_{e_{x}\in S}E_{x}$. Then $E'$ is tight at  $f$ by Theorem \ref {thm 2}, that is, $\sum\limits_{e_{x}\in E'}f(e_{x})=\mu(E')$. Note $e_{y}\in E'$, $E'\cap S=\emptyset$ and $f_{e_{m},i}(e_{y})>f(e_{y})$. Moreover, $f_{e_{m},i}(e_{x})=f(e_{x})$ or $f(e_{x})=0$ for any $e_{x}\in E'$. We have that
$\sum\limits_{e_{x}\in E'}f_{e_{m},i}(e_{x})>\sum\limits_{e_{x}\in E'}f(e_{x})=\mu(E')$, a contradiction.

\noindent{\bf Case 2.} $f_{e_{m},i}(e_{y})< f(e_{y})$.
Let $S'=\{e_{x}|f_{e_{m},i}(e_{x})\neq 0$ and  $e_{x}<e_{y}\}$. Since $e_{y}$ is externally active with respect to $f_{e_{m},i}$, there is a subset $E'_{x}$$\subset E$, which contains $e_{y}$ and does not contain $e_{x}$,  is tight at  $f_{e_{m},i}$ for any $e_{x}\in S'$ by Lemma \ref{lem 7}. Let $E''=\bigcap\limits_{e_{x}\in S'}E'_{x}$. Then $E''$ is tight at  $f_{e_{m},i}$ by Theorem \ref {thm 2}, that is, $\sum\limits_{e_{x}\in E''}f_{e_{m},i}(e_{x})=\mu(E'')$. Note  $f(e_{y})>f_{e_{m},i}(e_{y})$, and $f(e_{x})=f_{e_{m},i}(e_{x})$ or $f_{e_{m},i}(e_{x})=0$ for any $e_{x}\in E''$. We have that
$\sum\limits_{e_{x}\in E''}f(e_{x})>\sum\limits_{e_{x}\in E''}f_{e_{m},i}(e_{x})=\mu(E'')$, a contradiction.
Hence, \textbf{Claim 2} is true.

Thus, the number of hypertrees with external inactivity 1 is exactly $|V|-1$.
\end{proof}

\section{Examples}
\noindent

In this section, as examples we shall compute the interior polynomial and the exterior polynomial for several families of bipartite graphs. Computational results in this section are consistent with results in Sections 3 and 4.

\begin{observation}\label{obs 2}
If $G=(V\cup E, \varepsilon)$ is a tree, then $I_{G}(x)=X_{G}(y)=1$.
\end{observation}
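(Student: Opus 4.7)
The plan is to show that a tree $G = (V \cup E, \varepsilon)$ admits exactly one hypertree, and then to observe that this forces all hyperedges to be simultaneously internally and externally active, yielding $I_G(x) = X_G(y) = 1$.

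First I would argue uniqueness of the hypertree. By Definition \ref{def 1}, every hypertree of $\mathcal{H}$ is induced by some spanning tree of $\mathrm{Bip}\,\mathcal{H}$; since we identify $G$ with its associated bipartite graph and $G$ is already a tree, the only spanning tree of $G$ is $G$ itself. Therefore there is a unique hypertree $f$, determined by $f(e) = d_G(e) - 1$ for every $e \in E$.

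Next I would invoke the definitions of internal and external activity directly. A transfer of valence from $e$ to $e'$ (in either direction) produces a second hypertree of $G$, which cannot exist. Hence no transfer of valence is possible at all, so with respect to any fixed order on $E$ every $e \in E$ is both internally active and externally active with respect to $f$. This gives $\overline{\iota}(f) = \overline{\epsilon}(f) = 0$, and summing over the single hypertree yields $I_G(x) = x^0 = 1$ and $X_G(y) = y^0 = 1$.

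There is no real obstacle here; the observation is essentially immediate from the definitions once one notices that a tree has a unique spanning tree. The only thing to be mildly careful about is the convention that we view the tree simultaneously as a hypergraph $\mathcal{H}$ and as the bipartite graph $\mathrm{Bip}\,\mathcal{H}$, which is legitimate because a tree on bipartition $V \cup E$ is simple (no multiple edges or loops), so $\mathrm{Bip}\,\mathcal{H} = G$ as required.
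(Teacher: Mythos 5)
Your proposal is correct and follows essentially the same route as the paper: the tree is its own unique spanning tree, hence there is a unique hypertree, no transfer of valence is possible, and every hyperedge is both internally and externally active, giving $I_G(x)=X_G(y)=1$. The extra remark about identifying $G$ with $\mathrm{Bip}\,\mathcal{H}$ is a harmless clarification the paper leaves implicit.
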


\begin{proof}
Since there is only one spanning tree for a tree there is only one hypertree $f$ for $G$. Furthermore, every hyperedge $e\in E$ will be internally active and externally active with respect to the hypertree $f$ since there are no other hypertrees. So $I_{G}(x)=x^0=1$ and $X_{G}(y)=y^0=1$.
\end{proof}

\begin{theorem}\label{obs 1}
If $G=(V\cup E, \varepsilon)$ is a cycle of length $2n$, then $I_{G}(x)=1+x+x^{2}+\cdots+x^{n-1}$ and $X_{G}(y)=1+(n-1)y$.
\end{theorem}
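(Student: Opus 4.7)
The plan is to enumerate the hypertrees of $C_{2n}$, determine the activity of each hyperedge under a natural ordering, and then assemble the two polynomials by summing.

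First I would identify the hypertrees. Label the cycle as $v_1 e_1 v_2 e_2 \cdots v_n e_n v_1$ with $v_i\in V$, $e_i\in E$. Every spanning tree of $C_{2n}$ is obtained by deleting one edge, so $d_\tau(e)\in\{1,2\}$ for each $e\in E$, hence any hypertree $f$ of $G$ satisfies $f(e)\in\{0,1\}$. Combined with $\sum_{e\in E} f(e)=|V|-1=n-1$ from Theorem \ref{thm 1}(2), exactly one entry of $f$ is $0$ and the rest equal $1$. Writing $f_j$ for the function with $f_j(e_j)=0$ and $f_j(e_k)=1$ for $k\neq j$, each $f_j$ is realised by the spanning tree obtained by deleting either edge of $G$ incident to $e_j$. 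So $B_G=\{f_1,\ldots,f_n\}$.

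Next I would analyse transfer of valence under the order $e_1<e_2<\cdots<e_n$. To transfer valence from $e_k$ to $e_\ell$ with respect to $f_j$, the resulting function $f'$ must still satisfy $f'(e)\le d_G(e)-1=1$, so we need $f_j(e_\ell)=0$, forcing $\ell=j$; and we need $f_j(e_k)=1$, forcing $k\neq j$. Conversely, performing this transfer on $f_j$ (when $k\neq j$) produces precisely $f_k\in B_G$, so the transfer is legitimate. Hence in $f_j$ one can transfer valence exactly from each $e_k$ ($k\neq j$) to $e_j$, and in no other way.

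With this in hand the activity count is immediate. A hyperedge $e_k$ is internally inactive with respect to $f_j$ iff some such transfer lands on an $e_\ell<e_k$, i.e.\ iff $\ell=j<k$; so $\overline{\iota}(f_j)=|\{k:k>j\}|=n-j$. A hyperedge $e_k$ is externally inactive with respect to $f_j$ iff some transfer originates at some $e_\ell<e_k$ and lands at $e_k$; by the previous paragraph this forces $k=j$ and requires some $\ell<j$ to exist, so $\overline{\epsilon}(f_j)=1$ if $j>1$ and $\overline{\epsilon}(f_1)=0$. Therefore
\[
I_G(x)=\sum_{j=1}^{n} x^{n-j}=1+x+x^2+\cdots+x^{n-1},\qquad X_G(y)=1+(n-1)y,
\]
as claimed.

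There is no real obstacle here; the only subtlety is the observation that because every hyperedge of $C_{2n}$ has $G$-degree $2$, hypertree values are pinned to $\{0,1\}$ and every permitted transfer of valence is forced to flow into the unique zero entry. Once that rigidity is noted, enumeration of $B_G$ and the activity counts reduce to elementary bookkeeping on the index of the zero entry.
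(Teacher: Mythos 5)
Your proof is correct and takes essentially the same approach as the paper: identify the $n$ hypertrees $f_1,\dots,f_n$ of $C_{2n}$ from the $2n$ spanning trees, then count internal and external inactivity under the order $e_1<e_2<\cdots<e_n$. You additionally spell out the transfer-of-valence rigidity (every legal transfer must flow into the unique zero entry) that the paper dismisses with ``clearly,'' so your activity counts $\overline{\iota}(f_j)=n-j$ and $\overline{\epsilon}(f_j)=1-[j=1]$ are fully justified.
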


\begin{proof}
Since the two hypergraphs induced by $C_{2n}$ are the same, it is justified the exterior polynomial of $C_{2n}$. Recall that $\sum\limits_{e\in E}f(e)=n-1$ for any hypertree $f$ of $G$. Moreover, $0\leq f(e)\leq d_{G}(e)-1$  and $d_{G}(e)=2$ for all $e\in E$. Then given an order $e_{1}<e_{2}<\cdots<e_{n}$ on $E$, there are at most $n$ hypertrees $f_{1}=(0,1,\cdots,1)$, $f_{2}=(1,0,\cdots,1)$,$\cdots$,$f_{n}=(1,1,\cdots,0)$ for  $G$. Moreover, let the subgraph $\tau_{i}$ be the bipartite graph obtained from $G$ by removing an edge incident with $e_{i}$ for every $i=1,2,\cdots,n$. It is obvious that $\tau_{i}$ is a spanning tree inducing $f_{i}$ of $G$ for all $i=1,2,\cdots,n$.  We obtain that there are exactly $n$ hypertrees $f_{i}$ $(i=1,2,\cdots,n)$ of $G$. Clearly,  $f_{i}$ is a hypertree with internal inactivity $n-i$ in $G$ for any $i=1,2,\cdots,n$, $f_{1}$ is a hypertree with external inactivity $0$, and $f_{i}$ is a hypertree with external inactivity $1$ for any $i=2,3,\cdots,n$. Thus, $I_{G}(x)=1+x+x^{2}+\cdots+x^{n-1}$ and $X_{G}(y)=1+(n-1)y$.
\end{proof}
\begin{corollary}
 If $G=(V\cup E, \varepsilon)$ is unicyclic and the length of the unique cycle is $2n$, then $I_{G}(x)=1+x+x^{2}+\cdots+x^{n-1}$ and $X_{G}(y)=1+(n-1)y$.
\end{corollary}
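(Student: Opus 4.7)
The plan is to reduce the unicyclic case directly to the already-proved cycle case by peeling off pendant vertices one at a time, invoking Theorem \ref{thm 8}, which asserts that deleting a valence-$1$ vertex changes neither $I_G$ nor $X_G$.

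First I would set up an induction on $|V|+|E|$. The base case is when $G$ itself is exactly the cycle $C_{2n}$, where Theorem \ref{obs 1} gives the claimed formulas $I_G(x)=1+x+\cdots+x^{n-1}$ and $X_G(y)=1+(n-1)y$. For the inductive step, assume $G$ is unicyclic with unique cycle of length $2n$ but $G\ne C_{2n}$. Then $G$ has at least one vertex lying outside the cycle. Since $G$ is connected and the complement of the cycle (i.e.\ the subgraph on the non-cycle edges, which are all bridges) forms a forest attached to the cycle, any connected component of that forest contains at least one leaf of $G$, say $u\in V\cup E$ with $d_G(u)=1$. By Theorem \ref{thm 8},
\[
I_G(x)=I_{G\setminus u}(x),\qquad X_G(y)=X_{G\setminus u}(y).
\]
The graph $G\setminus u$ is still connected (since $u$ had valence $1$), is still bipartite, and still contains the same unique cycle of length $2n$, so it is unicyclic with cycle length $2n$ and has strictly fewer vertices than $G$. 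The induction hypothesis then yields the desired formulas for $G\setminus u$, and hence for $G$.

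An equivalent, more structural variant of the argument (which I would mention as an alternative) is to view $G$ as the cycle $C_{2n}$ with a tree $T_v$ (possibly trivial) attached at each vertex $v$ of the cycle via vertex identification. Repeated application of Theorem \ref{thm 5}(1) gives
\[
I_G(x)=I_{C_{2n}}(x)\cdot\prod_{v}I_{T_v}(x),\qquad X_G(y)=X_{C_{2n}}(y)\cdot\prod_{v}X_{T_v}(y),
\]
and since $I_{T_v}=X_{T_v}=1$ for every tree $T_v$ by Observation \ref{obs 2}, the products collapse to $I_{C_{2n}}(x)$ and $X_{C_{2n}}(y)$, respectively, which are supplied by Theorem \ref{obs 1}.

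There is no real obstacle in either approach; the only small point to verify is that a proper unicyclic graph must contain a vertex of valence $1$, which is immediate from the fact that the subgraph obtained by deleting the cycle edges is a (nonempty) forest whose leaves remain leaves in $G$. Note that we do not need to distinguish which colour class plays the role of hyperedges: the interior polynomial is an invariant of the underlying bipartite graph by Theorem \ref{thm 4}, and the cycle $C_{2n}$ is symmetric between its two colour classes so the exterior polynomial is also unambiguous.
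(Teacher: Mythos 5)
Your proof is correct. Your primary argument — induction on the number of vertices, peeling off valence-$1$ vertices with Theorem \ref{thm 8} until only the cycle $C_{2n}$ remains and then invoking Theorem \ref{obs 1} — is a genuinely different (and somewhat more elementary) route than the one the paper takes: the paper's proof is the one-line observation that $G$ decomposes as $C_{2n}$ with trees attached by one-vertex identifications, so that Theorem \ref{thm 5}(1) gives $I_G=I_{C_{2n}}\prod_v I_{T_v}$ and likewise for $X$, and Observation \ref{obs 2} kills the tree factors. That is exactly the ``structural variant'' you sketch as an alternative, so in effect you have supplied both proofs. The leaf-deletion induction buys you independence from the product formula (you only need the valence-$1$ reduction), at the cost of a slightly longer argument and the small auxiliary fact that a proper unicyclic graph has a leaf, which you correctly justify. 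Your closing remark about which colour class plays the role of hyperedges is a worthwhile point that the paper glosses over, and your resolution of it is sound.
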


\begin{proof}
It follows from Theorem \ref {thm 5} (1), Observation \ref {obs 2} and Theorem \ref {obs 1}.
\end{proof}

Let $P_{n+1}$ be the path of length $n$. Let $G\times H$ be the Cartesian product of $G$ and $H$.

\begin{corollary}
If $G=(V\cup E,\varepsilon)$ is  $P_{n+1}\times P_{2}$, then $I_{G}(x)=(1+x)^{n}$ and $X_{G}(y)=(1+y)^{n}$.
\end{corollary}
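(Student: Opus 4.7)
The plan is to decompose $G = P_{n+1} \times P_2$ into a chain of $n$ four-cycles glued along edges and then apply the edge-identification multiplicativity in Theorem \ref{thm 5}(2), combined with the $C_4$ case already computed in Theorem \ref{obs 1} (cycle length $2n=4$), which gives $I_{C_4}(x) = 1+x$ and $X_{C_4}(y) = 1+y$.

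Label the vertices of $G$ by pairs $(i,j)$ with $0 \leq i \leq n$ and $j \in \{0,1\}$; its edges are the horizontal $\{(i,j),(i+1,j)\}$ and the rungs $\{(i,0),(i,1)\}$, and its canonical bipartition splits the vertex set by the parity of $i+j$. For each $0 \leq i \leq n-1$ let $C_i$ be the 4-cycle on $(i,0),(i+1,0),(i+1,1),(i,1)$. Consecutive cycles $C_{i-1}$ and $C_i$ meet in exactly the rung $\{(i,0),(i,1)\}$, whose endpoints lie in opposite color classes. Setting $H_1 = C_0$ and, for $2 \leq k \leq n$, letting $H_k$ be obtained from $H_{k-1}$ by identifying this shared rung with the corresponding edge of $C_{k-1}$ (so that $V$- and $E$-vertices are matched correctly), we reach $H_n = G$.

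Theorem \ref{thm 5}(2) then yields $I_{H_k}(x) = I_{H_{k-1}}(x) \cdot I_{C_4}(x)$ and $X_{H_k}(y) = X_{H_{k-1}}(y) \cdot X_{C_4}(y)$ at each step, and a straightforward induction on $k$ gives $I_G(x) = (1+x)^n$ and $X_G(y) = (1+y)^n$. The only thing requiring care is checking that the edge identifications respect the bipartite structure demanded by Theorem \ref{thm 5}(2): each $V$-vertex of $C_{k-1}$ must be glued to a $V$-vertex of $H_{k-1}$, and similarly for $E$-vertices. Since the bipartition of $G$ restricts to the standard bipartition of every $C_i$ and is consistent on the overlap rung, this compatibility is automatic, and no further obstacle arises.
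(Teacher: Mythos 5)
Your proposal is correct and follows essentially the same route as the paper: the paper also obtains $P_{n+1}\times P_{2}$ by successively identifying edges of copies of $C_{4}$ and then applies Theorem \ref{thm 5}(2) together with the case $n=2$ of Theorem \ref{obs 1}. Your additional check that the glued rungs have endpoints in opposite colour classes, so that the identification respects the bipartition required by Theorem \ref{thm 5}(2), is a point the paper leaves implicit (deferring to its Figure 3) but is handled correctly in your argument.
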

\begin{proof}
Note that $G$ can be obtained by identified edges of $C_4$.  See Figure 3. So it follows form Theorem \ref {thm 5} (2) and Theorem \ref {obs 1}.
\end{proof}
\begin{center}
\includegraphics[width=3.3in]{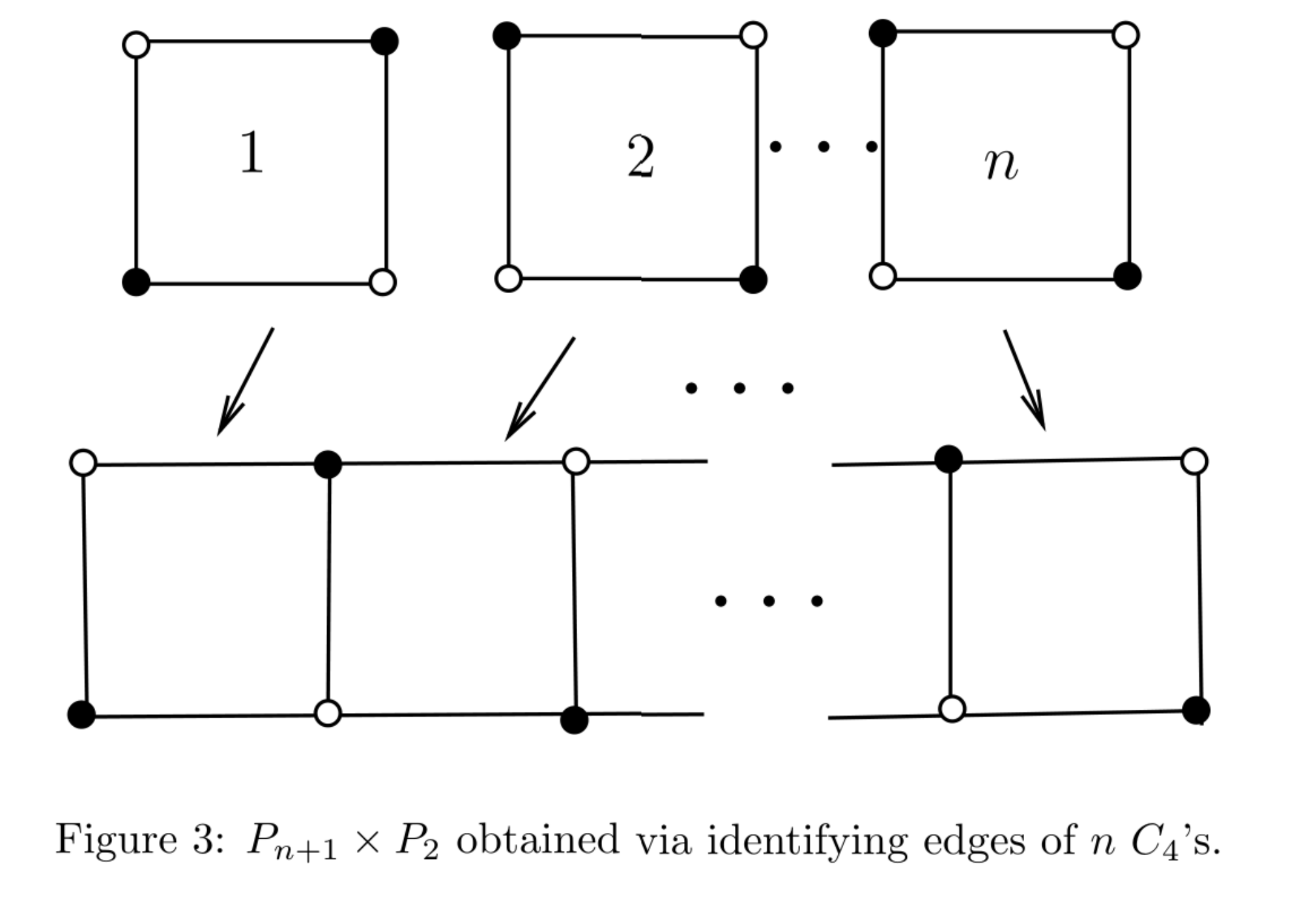}
\end{center}

In the following we consider interior and exterior polynomials of some dense graphs. Beforehand, we recall two combinatorial numbers as follows (see \cite{West}, p.19-20).

\begin{lemma}[\cite{West}]\label{lem 30}
The number of ordered partitions of $n$ into $k$ parts is $\binom{n-1}{k-1}$.
\end{lemma}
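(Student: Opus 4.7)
The plan is the classical stars-and-bars argument, set up as an explicit bijection. Recall that an ordered partition of $n$ into $k$ parts means a tuple $(a_1, a_2, \ldots, a_k)$ of positive integers with $a_1 + a_2 + \cdots + a_k = n$, where the order of the parts matters. I will count such tuples by matching them one-to-one with the $(k-1)$-element subsets of $\{1, 2, \ldots, n-1\}$, whose cardinality is manifestly $\binom{n-1}{k-1}$.

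First, I would define the forward map: to a tuple $(a_1, \ldots, a_k)$ associate the set of partial sums
\[
S(a_1,\ldots,a_k) = \{a_1,\; a_1+a_2,\; \ldots,\; a_1+\cdots+a_{k-1}\}.
\]
Since each $a_i \geq 1$, these $k-1$ partial sums are strictly increasing and lie in $\{1, 2, \ldots, n-1\}$, so $S$ is indeed a $(k-1)$-subset of that set.

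Next I would construct the inverse map: given a subset $\{s_1 < s_2 < \cdots < s_{k-1}\} \subseteq \{1, \ldots, n-1\}$, set $s_0 = 0$ and $s_k = n$, and define $a_i = s_i - s_{i-1}$ for $i = 1, \ldots, k$. Strict monotonicity of the $s_i$'s guarantees $a_i \geq 1$, and the telescoping sum gives $a_1 + \cdots + a_k = n$. A direct check shows these two maps are mutual inverses.

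The main (and only) obstacle is making sure both the positivity constraint $a_i \geq 1$ and the summation constraint $\sum a_i = n$ are matched exactly by the conditions "subset of $\{1, \ldots, n-1\}$" and "cardinality $k-1$"; the telescoping identity and the strict inequality $s_{i-1} < s_i$ handle these cleanly, so the argument is essentially a one-line observation. Therefore the number of ordered partitions equals the number of $(k-1)$-subsets of an $(n-1)$-set, namely $\binom{n-1}{k-1}$.
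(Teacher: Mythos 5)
Your proof is correct and complete: the partial-sums bijection with $(k-1)$-subsets of $\{1,\ldots,n-1\}$ is the classical stars-and-bars argument, which is exactly the proof given in the cited reference (the paper itself quotes this lemma from West without proof). Nothing is missing.
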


\begin{lemma}[\cite{West}]\label{lem 31}
The number of $m$-element multisets of an $n$-element set is $\binom{m+n-1}{n-1}$.
\end{lemma}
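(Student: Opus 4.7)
The plan is to derive this identity directly from Lemma \ref{lem 30} via a shift of variables. First I would encode an $m$-element multiset $M$ of an $n$-element set $\{x_{1}, x_{2}, \ldots, x_{n}\}$ by its multiplicity vector $(c_{1}, c_{2}, \ldots, c_{n})$, where $c_{i} \geq 0$ records how many copies of $x_{i}$ appear in $M$. This assignment is a bijection between $m$-element multisets and nonnegative integer tuples satisfying $c_{1} + c_{2} + \cdots + c_{n} = m$, so counting the multisets reduces to counting such tuples.

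Next I would apply the elementary shift $d_{i} = c_{i} + 1$ for each $i = 1, 2, \ldots, n$. The $d_{i}$ are then positive integers with $d_{1} + d_{2} + \cdots + d_{n} = m + n$, which is exactly an ordered partition of $m+n$ into $n$ parts. Since the shift and its inverse $c_{i} = d_{i} - 1$ are clearly mutually inverse, this gives a bijection between the two sets being counted. By Lemma \ref{lem 30} applied with $n$ replaced by $m+n$ and $k$ replaced by $n$, the number of such ordered partitions is $\binom{(m+n)-1}{n-1} = \binom{m+n-1}{n-1}$, which is the desired count.

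Since this is a classical stars-and-bars identity, I do not anticipate any real obstacle. The only point that needs verification is that the multiplicity-vector encoding is genuinely a bijection (obvious) and that the shift preserves the count (obvious). If one preferred a self-contained route not invoking Lemma \ref{lem 30}, the same conclusion follows from the direct stars-and-bars encoding of a multiset as a binary string of $m$ stars and $n-1$ bars, giving $\binom{m+n-1}{n-1}$ arrangements; but chaining through Lemma \ref{lem 30} is cleaner in the present context.
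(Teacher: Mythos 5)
Your argument is correct: the multiplicity-vector encoding followed by the shift $d_{i}=c_{i}+1$ gives a clean bijection with ordered partitions of $m+n$ into $n$ parts, and Lemma \ref{lem 30} then yields $\binom{m+n-1}{n-1}$. The paper itself offers no proof of this statement --- it is quoted as a known fact from the cited reference --- so there is nothing to compare against; your derivation (equivalently, the direct stars-and-bars encoding you mention) is the standard one and is complete.
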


\begin{theorem}
Let $G=(V\cup E, \varepsilon)$ be the complete bipartite graph $K_{m,n}$ $(m\leq n)$. Then
\begin{enumerate}
\item[(1)] $I_{G}(x)=\sum\limits_{i=0}^{m-1}\binom{n-1}{i}\binom{m-1}{i}x^{i}$.
\item[(2)] $X_{G}(y)=\sum\limits_{i=0}^{n-1}\binom{m+i-2}{i}y^{i}$ if we regard $E$ as hyperedges.
\end{enumerate}
\end{theorem}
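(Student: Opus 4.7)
The plan is to count hypertrees of $K_{m,n}$ with each prescribed internal (respectively, external) inactivity. The key structural observation is that $K_{m,n}$ is extremely uniform: every hyperedge $e \in E$ contains every vertex of $V$, so for any nonempty $E' \subseteq E$ one has $\bigcup E' = V$ and $G|_{E'} = K_{m,|E'|}$ is connected, hence $\mu(E') = m-1$. This collapses all of the machinery, and the count reduces to elementary compositions.

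I would proceed in two preparatory stages. First, characterize hypertrees: the submodular constraints $\sum_{e\in E'}f(e)\leq \mu(E')=m-1$ and the upper bounds $f(e)\leq m-1$ in Theorem~\ref{thm 1} are automatic from $\sum_{e\in E}f(e)=m-1$ and $f\geq 0$, so the hypertrees are exactly the weak compositions of $m-1$ into $n$ parts indexed by $E$. Second, characterize tight subsets and transfers: a nonempty $E'\subseteq E$ is tight at $f$ iff $\mathrm{supp}(f)\subseteq E'$. By Lemma~\ref{lem 20}, whenever $f(e)>0$, any tight subset missing $e$ would have to contain $\mathrm{supp}(f)$ and therefore $e$, a contradiction, so a transfer of valence from $e$ to any other hyperedge is always possible.

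For part (1), fix the order $e_1<e_2<\cdots<e_n$. By Lemma~\ref{lem 3}, $e_j$ is internally inactive with respect to $f$ iff $f(e_j)>0$ and $j\geq 2$, so $\bar\iota(f)=|\mathrm{supp}(f)\cap\{e_2,\ldots,e_n\}|$. I would count hypertrees with $\bar\iota(f)=i$ by splitting on whether $e_1\in\mathrm{supp}(f)$: in the positive case, choose the remaining $i$ supporting hyperedges from $\{e_2,\ldots,e_n\}$ and distribute $m-1$ as a composition into $i+1$ positive parts; in the negative case, choose $i$ supporting hyperedges from $\{e_2,\ldots,e_n\}$ and distribute $m-1$ into $i$ positive parts. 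Applying Lemmas~\ref{lem 30} and~\ref{lem 31}, the two contributions $\binom{n-1}{i}\binom{m-2}{i}$ and $\binom{n-1}{i}\binom{m-2}{i-1}$ combine via Pascal's rule into $\binom{n-1}{i}\binom{m-1}{i}$.

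For part (2), by Lemma~\ref{lem 7}, $e_j$ is externally inactive iff some $e_k<e_j$ has $f(e_k)>0$; equivalently, $\bar\epsilon(f)=n-i^{*}$ where $i^{*}=\min\{j : f(e_j)>0\}$. Hypertrees with $\bar\epsilon(f)=i$ are therefore those with $f(e_1)=\cdots=f(e_{n-i-1})=0$ and $f(e_{n-i})\geq 1$, which after the substitution $f(e_{n-i})=1+g$ with $g\geq 0$ amount to weak compositions of $m-2$ into the $i+1$ non-negative variables $g,f(e_{n-i+1}),\ldots,f(e_n)$, yielding $\binom{m+i-2}{i}$ by Lemma~\ref{lem 31}. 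The only real subtlety is the initial collapse that makes tightness depend only on the support; once that is in hand the problem becomes pure combinatorics, and the most delicate step to state cleanly is the case split in part (1) that converts into Pascal's rule.
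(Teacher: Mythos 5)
Your argument follows essentially the same route as the paper's: both reduce the problem to counting weak compositions of $m-1$ into $n$ parts, split on whether $f(e_1)=0$ and apply Pascal's rule for part (1), and count by the smallest hyperedge in the support for part (2). The one genuine gap is in your first preparatory stage. Theorem~\ref{thm 1} gives only \emph{necessary} conditions for a function $f\colon E\to N$ to be a hypertree; observing that those conditions are automatically satisfied by every weak composition of $m-1$ into $n$ nonnegative parts does not show that every such composition \emph{is} a hypertree, i.e., that it is realized by some spanning tree of $K_{m,n}$. (The converse of Theorem~\ref{thm 1} is true --- hypertrees are exactly the lattice points of a polymatroid --- but that characterization is not quoted in this paper, so you cannot lean on it silently.) The paper closes this step by explicitly constructing, for each such $f$, a spanning tree in which each $e_k$ in the support is joined to a consecutive block of $f(e_k)+1$ vertices of $V$ and every other hyperedge is joined to $v_1$; for $K_{m,n}$ this is a two-line construction, and you should include it or an explicit citation of the polymatroid characterization.

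Everything after that is sound and, if anything, cleaner than the paper's version. Your observation that $\mu(E')=m-1$ for every nonempty $E'$, hence that $E'$ is tight at $f$ if and only if $\mathrm{supp}(f)\subseteq E'$, gives a transparent derivation of $\overline{\iota}(f)=|\mathrm{supp}(f)\cap\{e_2,\dots,e_n\}|$ and $\overline{\epsilon}(f)=n-\min\{j\colon f(e_j)>0\}$ via Lemmas~\ref{lem 20}, \ref{lem 3} and \ref{lem 7}; the paper reaches the same activity counts more implicitly. Your two cases in part (1) give $\binom{n-1}{i}\binom{m-2}{i}+\binom{n-1}{i}\binom{m-2}{i-1}=\binom{n-1}{i}\binom{m-1}{i}$, which is the paper's computation organized per coefficient rather than per support size, and in part (2) your substitution $f(e_{n-i})=1+g$ yields $\binom{m+i-2}{i}$ directly where the paper sums over the value of $f(e_{n-i})$ and invokes a hockey-stick identity. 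Do handle $m=1$ separately, as the paper does, since your support-based formulas presuppose $\mathrm{supp}(f)\neq\emptyset$.
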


\begin{proof}
If $m=1$, then $G$ is a star. It is clear that the conclusion is true from Observation \ref {obs 2}.

Assume $m\geq 2$. By Theorem \ref {thm 3}, the interior polynomial and the exterior polynomial are independent of the chosen order of the hyperedges. Without loss of generality, we assume that $|V|=m$ and $|E|=n$. Moveover, let $v_{1}<v_{2}<\cdots<v_{m}$ and $e_{1}<e_{2}<\cdots<e_{n}$.

We have that the interior polynomials of abstract dual hypergraphs are equal by Theorem \ref {thm 4}. Without loss of generality we consider vertices of $E$ as hyperdeges.
Firstly, we prove that if a function $f$ satisfies $\sum\limits_{e\in E}f(e)=m-1$ and $0\leq f(e)\leq m-1$ for all $e\in E$, then $f$ is a hypertree of $K_{m,n}$.

We assume that the cardinality of the set $\{e|f(e)\neq 0\}$ is $i$ (note $0\leq f(e)\leq m-1$ and $\sum\limits_{e\in E}f(e)=m-1$, so $i>0$) and without loss of generality, suppose that $f(e_{1})\neq0$,$\cdots$,$f(e_{i})\neq0$. We construct the spanning tree $T$ of $G$ inducing $f$ as follows: $e_{1}$ is connected to $v_{1}$, $v_{2}$,$\cdots$, $v_{f(e_{1})+1}$,$\cdots$, and $e_{k}$ is connected to $v_{[\sum\limits_{j=1}^{k-1}f(e_{j})]+1}$, $v_{[\sum\limits_{j=1}^{k-1}f(e_{j})]+2}$,$\cdots$, $v_{[\sum\limits_{j=1}^{k}f(e_{j})]+1}$,\ldots, and $e_{i}$ is connected to $v_{[\sum\limits_{j=1}^{i-1}f(e_{j})]+1}$, $v_{[\sum\limits_{j=1}^{i-1}f(e_{j})]+2}$,$\cdots$, $v_{[\sum\limits_{j=1}^{i}f(e_{j})]+1}=v_{m}$, finally each $e\in E\backslash\{e_{1},e_{2},\cdots,e_{i}\}$ is connected to $v_{1}$.

Next, we consider hypertrees $f$ with $i$ positions that are not 0 for $i=1,2,\cdots,m-1$. If $f(e_{1})=0$, then the hypertree $f$ is a hypertree with internal  inactivity  $i$ and there are $\binom{n-1}{i}\binom{m-2}{i-1}$ such hypertrees by Lemma \ref {lem 30}.  If $f(e_{1})\neq0$, then   hypertree $f$ is a hypertree with internal  inactivity  $i-1$ and there are $\binom{n-1}{i-1}\binom{m-2}{i-1}$ such hypertrees by Lemma \ref {lem 30}, that is, there are $\binom{n-1}{i}\binom{m-2}{i-1}$  hypertrees with internal  inactivity $i$ and $\binom{n-1}{i-1}\binom{m-2}{i-1}$  hypertrees with internal  inactivity $i-1$ for  hypertrees with $i$ positions that are not 0. Note that $\binom{n-1}{0}\binom{m-2}{0-1}=0$ and $\binom{n-1}{m-1}\binom{m-2}{m-1}=0$. We have that
\begin{eqnarray*}
I_{G}(x)&=&\sum_{i=1}^{m-1}\binom{n-1}{i}\binom{m-2}{i-1}x^{i}+\sum_{i=1}^{m-1}\binom{n-1}{i-1}\binom{m-2}{i-1}x^{i-1}\\
&=&\sum_{i=1}^{m-1}\binom{n-1}{i}\binom{m-2}{i-1}x^{i}+\sum_{i=0}^{m-2}\binom{n-1}{i}\binom{m-2}{i}x^{i}\\
&=&\sum_{i=1}^{m-1}\binom{n-1}{i}\binom{m-2}{i-1}x^{i}+\binom{n-1}{0}\binom{m-2}{0-1}x^{0}\\
&&+\sum_{i=1}^{m-2}\binom{n-1}{i}\binom{m-2}{i}x^{i}+\binom{n-1}{m-1}\binom{m-2}{m-1}x^{m-1}\\
&=&\sum_{i=0}^{m-1}\binom{n-1}{i}\binom{m-2}{i-1}x^{i}+\sum_{i=0}^{m-1}\binom{n-1}{i}\binom{m-2}{i}x^{i}\\
&=&\sum_{i=0}^{m-1}\binom{n-1}{i}\binom{m-1}{i}x^{i}.
\end{eqnarray*}

For the exterior polynomial $X_{G}(y)$ of $G$, we assume that
vertices of $E$ are hyperdeges. For each $i=0,1,\cdots,n-1$, we consider a hypertree $f$ with $f(e_{i+1})\neq 0$ and $f(e_{j})=0$ for all $j=1,\cdots,i$. Then the hypertree $f$ is a hypertree with external  inactivity  $n-i-1$. If $f(e_{i+1})=j$ for $j=1,2,\cdots,m-1$, then there are $\binom{m-j+n-i-3}{n-i-2}$ such hypertrees by Lemma \ref {lem 31}.  Then  there are $\sum\limits_{j=1}^{m-1}\binom{m-j+n-i-3}{n-i-2}=\binom{m+n-i-3}{n-i-1}$  hypertrees with external  inactivity  $n-i-1$, that is, there exist $\binom{m+i-2}{i}$ hypertrees with external  inactivity $i$. Thus, we have that $X_{G}(y)=\sum\limits_{i=0}^{n-1}\binom{m+i-2}{i}y^{i}$.
\end{proof}

\begin{theorem}
Let $G=(V\cup E,\varepsilon)$ be a bipartite graph obtained from $K_{m,n}(m\leq n)$ by deleting a matching with $q$ ($q\leq m$) edges. Then
\begin{enumerate}
\item[(1)] $I_{G}(x)=1+[(n-1)(m-1)-q]x+\sum\limits_{i=2}^{m-1}\binom{n-1}{i}\binom{m-1}{i}x^{i}$.
\item[(2)] $X_{G}(y)=\sum\limits_{i=0}^{n-2}\binom{m+i-2}{i}y^{i}+[\binom{m+n-3}{n-1}-q]y^{n-1}$ if we regard $E$ as hyperedges.
\end{enumerate}
\end{theorem}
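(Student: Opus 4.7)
The plan is to leverage the immediately preceding theorem on $K_{m,n}$ by first determining how the set of hypertrees changes when a matching is deleted, and then tracking the effect on internal and external inactivities. For $m \ge 3$, every pair of hyperedges in $G$ still shares at least $m-2 \ge 1$ common neighbours in $V$, so $G|_{E'}$ remains connected and $\mu_G(E') = m-1$ for every non-empty $E' \subset E$ with $|E'| \ge 2$. Consequently the $\mu$-constraints collapse to the local bounds $f(e) \le d_G(e)-1$, and $H_G$ is obtained from $H_{K_{m,n}}$ simply by discarding those $f$ with $f(e) = m-1$ at some matched $e$. Since $\sum f = m-1$, each such ``bad'' $f$ has a single non-zero entry at one matched hyperedge, giving exactly $q$ bad hypertrees.

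For the interior polynomial, I would order the hyperedges so that some unmatched one is $e_1$ (possible when $q < n$; the remaining case $q = m = n$, where $G = K_{n,n}$ minus a perfect matching, may be handled separately). With this choice, the criterion for $e > e_1$ to be internally inactive -- namely, the feasibility of transferring valence to $e_1$, which requires $f(e) \ge 1$ and $f(e_1) \le m-2$ -- is identical in $G$ and in $K_{m,n}$. Hence each $f \in H_G$ has the same internal inactivity in both graphs, while every bad hypertree has internal inactivity $1$ in $K_{m,n}$ (its single non-zero entry sits at some $e_{j_i} \ne e_1$). Excising the $q$ bad hypertrees therefore yields $I_G(x) = I_{K_{m,n}}(x) - qx$, which on expansion is exactly the claimed formula. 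As a sanity check, the constant term and coefficient of $x$ are consistent with Theorems \ref{thm 16} and \ref{lem 10} applied to $G$, whose nullity is $(m-1)(n-1) - q$.

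For the exterior polynomial, the constant term is $1$ by Theorem \ref{thm 16}. The coefficient of $y^{n-1}$ I would handle directly: $f \in H_G$ satisfies $\bar{\epsilon}_G(f) = n-1$ precisely when $f(e_1) \ge 1$ and $f(e_j) \le d_G(e_j) - 2$ for every $j \ge 2$; choosing $e_1$ unmatched and applying inclusion--exclusion (the violating configurations at distinct matched $e_{j_i}$ are pairwise disjoint for $m \ge 3$, each corresponding to $f(e_1)=1$, $f(e_{j_i})=m-2$) produces the count $\binom{m+n-3}{n-1} - q$. For intermediate coefficients, I would classify each $f \in H_G$ by the position $s$ of its smallest non-zero entry and by the number $d_f$ of matched hyperedges $e_j > e_s$ on which $f$ attains value $m-2$, using the identity $\bar{\epsilon}_G(f) = n - s - d_f$, and then show that the resulting distribution, after removal of the $q$ bad hypertrees, reproduces the $K_{m,n}$ counts $\binom{m+i-2}{i}$ for $1 \le i \le n-2$. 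The main obstacle is this last bookkeeping step: as explicit small cases (e.g.\ $m=3$, $n=4$, $q=3$) show, the removal of bad hypertrees and the downward shifts in external inactivity at the remaining hypertrees combine in a highly cancellative way to leave only a net $-qy^{n-1}$ change, and verifying this cancellation in general calls for a careful double counting or inclusion--exclusion argument.
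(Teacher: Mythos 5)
Your overall strategy coincides with the paper's: show that deleting the matching removes exactly the $q$ hypertrees of $K_{m,n}$ concentrated at a single matched hyperedge and otherwise leaves the hypertree set intact, then track how activities change. Your observation that $\mu(E')=m-1$ in $G$ for all $E'$ with $|E'|\ge 2$ when $m\ge 3$, so that only the singleton capacities drop, is exactly the right reduction (the cases $m\le 2$ are degenerate and should be dispatched separately). For part (1) your device of ordering an unmatched hyperedge first is a clean variant of the paper's bookkeeping: the paper instead puts the matched hyperedges first and compensates ($f_1$ then has inactivity $0$ rather than $1$, but the hypertree $(m-2,1,0,\dots,0)$ drops from inactivity $1$ to $0$), arriving at the same net change $-qx$. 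However, your ordering is unavailable precisely when $q=m=n$, and you defer that case without an argument; as written, part (1) is unproved there, whereas the paper's ordering handles it uniformly.

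The substantive gap is in part (2). You correctly obtain the constant term and the top coefficient, and your identity $\overline{\epsilon}_G(f)=n-s-d_f$ is valid (note $d_f\in\{0,1\}$: having $f(e_j)=m-2$ at some $e_j>e_s$ together with $f(e_s)\ge 1$ already exhausts $\sum_{e}f(e)=m-1$). But the coefficients of $y^i$ for $1\le i\le n-2$ are only asserted to come out right after a cancellation that you yourself flag as the main obstacle; no argument is given. The paper closes this by comparing activities with $K_{m,n}$ rather than recounting from scratch: besides the $q$ deleted hypertrees $f_i$ (with $\overline{\epsilon}(f_i)=n-i$ in $K_{m,n}$), the only hypertrees whose external inactivity changes are those of the form $f(e_i)=m-2$ at a matched $e_i$, $f(e_j)=1$ at some $e_j<e_i$, and $0$ elsewhere; their inactivity drops by exactly one because the matched $e_i$ is now at capacity and becomes externally active. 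A short telescoping over $j$ then shows that every coefficient except that of $y^{n-1}$ is unchanged and that the top one drops by exactly $q$. Either carry out your $(s,d_f)$ count explicitly or adopt this comparison argument; without one of them part (2) is incomplete.
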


\begin{proof}
Without loss of generality, we assume that $v_{1}<v_{2}<\cdots<v_{m}$ and $e_{1}<e_{2}<\cdots<e_{n}$ in $V$ and $E$, respectively, and $M=\{v_{i}e_{i}|i=1,2,\cdots,q\}$ and $E'=\{e_{i}|i=1,2,\cdots,q\}$.

A similar argument of the proof of complete bipartite graphs shows that if a function $f$ satisfies $\sum\limits_{e\in E}f(e)=m-1$, $0\leq f(e)\leq m-2$ for all $e\in E'$ and $0\leq f(e)\leq m-1$ for all $e\in E \setminus E'$, then $f$ is a hypertree of $G$.

We assume that the cardinality of the set $\{e|f(e)\neq 0, e\in E'\}$ ($\{e|f(e)\neq 0, e\in E\setminus E'\}$) is  $i$ ($i'$, resp.) and $f(e_{1})\neq0$,$\cdots$,$f(e_{i})\neq0$,$f(e_{q+1})\neq0$,$\cdots$,$f(e_{q+i'})\neq0$. Then the spanning tree of $G$ inducing $f$ can be constructed as follows: $e_{1}$ is connected to $v_{2}$, $v_{3}$,$\cdots$, $v_{f(e_{1})+2}$,$\cdots$, and $e_{k}$ is connected to $v_{[\sum\limits_{j=1}^{k-1}f(e_{j})]+2}$, $v_{[\sum\limits_{j=1}^{k-1}f(e_{j})]+3}$,$\cdots$,$v_{[\sum\limits_{j=1}^{k}f(e_{j})]+2}$,$\cdots$, and $e_{i}$ is connected to $v_{[\sum\limits_{j=1}^{i-1}f(e_{j})]+2}$, $v_{[\sum\limits_{j=1}^{i-1}f(e_{j})]+3}$,$\cdots$,$v_{[\sum\limits_{j=1}^{i}f(e_{j})]+2}$, and $e_{q+1}$ is connected to $v_{[\sum\limits_{j=1}^{i}f(e_{j})]+2}$, $v_{[\sum\limits_{j=1}^{i}f(e_{j})]+3}$,$\cdots$,$v_{[\sum\limits_{j=1}^{i}f(e_{j})]+2+f(e_{q+1})}$,$\cdots$, and $e_{q+i'}$ is connected to $v_{[\sum\limits_{j=1}^{i}f(e_{j})+2+\sum\limits_{j=1}^{i'-1}f(e_{q+j})]}$, $v_{[\sum\limits_{j=1}^{i}f(e_{j})+3+\sum\limits_{j=1}^{i'-1}f(e_{q+j})]}$,$\cdots$,$v_{[\sum\limits_{j=1}^{i}f(e_{j})+1+\sum\limits_{j=1}^{i'}f(e_{q+j})]}=v_{m}$ and $v_{1}$.  Moreover, $e$ is connected to $v_{1}$ for all $e\in E\setminus \{e_{1},\cdots,e_{i},e_{q+1},\cdots,e_{q+i'}\}$. It is clear that $\sum\limits_{j=1}^{k-1}f(e_{j})+2>k$ for $k=1,2,\cdots,i$. Compared with complete bipartite graphs, $G$ has only no hypertrees $f_{i}$ $(1=1,2,\cdots,q)$ with $f_{i}(e_{i})=m-1$ and  $f_{i}(e)=0$ for all $e\neq e_{i}$.

We firstly consider the interior polynomial, it is clear that $\overline{\iota} (f_{1})=0$ and $\overline{\iota} (f_{i})=1$ $(i=2,\cdots,q)$ for $K_{m,n}$. Moreover,  $\overline{\iota}(f)=0$ in $G$ and $\overline{\iota} (f)=1$ in $K_{m,n}$ for the hypertree $f=(m-2,1,0,\cdots,0)$. And there are same internal inactivity for all $f'\neq f,f_{i}$ $(i=1,2,\cdots,q)$ in $K_{m,n}$ and $G$. Thus, $I_{G}(x)=1+[(n-1)(m-1)-q]x+\sum\limits_{i=2}^{m-1}\binom{n-1}{i}\binom{m-1}{i}x^{i}$.

For the exterior polynomial, we assume that vertices of $E$ are regarded as hyperdeges. It is clear that $\overline{\epsilon}(f_{i})=n-i$ $(1=1,2,\cdots,q)$ for $K_{m,n}$. Moreover, $\overline{\epsilon}(f)=n-j-1$ in $G$ and $\overline{\epsilon}(f)=n-j$ in $K_{m,n}$ for the hypertree $f$ with $f(e_{i})=m-2$, $f(e_{j})=1$ for some hyperedge $e_{j}<e_{i}$, and $f(e)=0$ for all $e\neq e_{i}, e_{j}$ for any $i=1,2,\cdots,q$. There are same external inactivity for all $f'\neq f,f_{i}$ $(i=1,2,\cdots,q)$ in $K_{m,n}$ and $G$. Thus, by simple calculations, we obtain $X_{G}(y)=\sum\limits_{i=0}^{n-2}\binom{m+i-2}{i}y^{i}+[\binom{m+n-3}{n-1}-q]y^{n-1}$.
\end{proof}
\section*{Acknowledgements}
\noindent
This work is supported by NSFC (No. 12171402) and the Fundamental Research
Funds for the Central Universities  (No. 20720190062).

\section*{References}
\bibliographystyle{model1b-num-names}
\bibliography{<your-bib-database>}

\end{document}